\theoremstyle{plain}
\newtheorem{theorem}{Theorem}
\newtheorem{lemma}[theorem]{Lemma}
\newtheorem{corollary}[theorem]{Corollary}
\newtheorem{proposition}[theorem]{Proposition}
\newtheorem{question}[theorem]{Question}
\theoremstyle{definition}
\newtheorem{example}[theorem]{Example}
\newtheorem{remark}[theorem]{Remark}
\newcommand{\eps}{\epsilon}
\newcommand{\ovm}{\overline{m}}
\newcommand{\bI}{{\mathbf I}}
\newcommand{\bJ}{{\mathbf J}}
\newcommand{\bD}{{\mathbf S}}
\newcommand{\bQ}{{\mathbf Q}}
\newcommand{\bL}{{\mathbf \Lambda}}
\newcommand{\bnabla}{{\mathbf \nabla}}
\newcommand{\ba}{{\mathbf a}}
\newcommand{\bb}{{\mathbf b}}
\newcommand{\bc}{{\mathbf c}}
\newcommand{\Heis}{\mathbb H}
\newcommand{\R}{\mathbb R}
\newcommand{\B}{\mathbb B}
\newcommand{\Sph}{\mathbb S}
\newcommand{\cA}{\mathcal A}
\newcommand{\cH}{\mathcal H}
\newcommand{\cI}{\mathcal I}
\newcommand{\cJ}{\mathcal J}
\newcommand{\cK}{\mathcal K}
\newcommand{\cM}{\mathcal M}
\newcommand{\cN}{\mathcal N}
\newcommand{\cP}{\mathcal P}
\newcommand{\cS}{\mathcal S}
\newcommand{\restrict}{\begin{picture}(12,12)
                        \put(2,0){\line(1,0){8}}
                        \put(2,0){\line(0,1){8}}
                       \end{picture}}
\DeclareMathOperator{\length}{length}
\DeclareMathOperator{\diam}{diam}
\DeclareMathOperator{\dist}{dist}
\DeclareMathOperator{\Vol}{Vol}
\DeclareMathOperator{\spa}{span}
\DeclareMathOperator{\spt}{spt}
\DeclareMathOperator{\rad}{rad}
\def\Xint#1{\mathchoice
{\XXint\displaystyle\textstyle{#1}}%
{\XXint\textstyle\scriptstyle{#1}}%
{\XXint\scriptstyle\scriptscriptstyle{#1}}%
{\XXint\scriptscriptstyle\scriptscriptstyle{#1}}%
\!\int}
\def\XXint#1#2#3{{\setbox0=\hbox{$#1{#2#3}{\int}$ }
\vcenter{\hbox{$#2#3$ }}\kern-.6\wd0}}
\def\dashint{\Xint-}
\newcommand{\pa}[1]{\left( #1 \right)}               
\numberwithin{theorem}{section}
\numberwithin{equation}{section}
\title{On uniform measures in the Heisenberg group}
\author{Vasilis Chousionis}
\address{Department of Mathematics \\ University of Connecticut \\ 196 Auditorium Rd U-3009 \\ Storrs, Connecticut 06269-3009}
\email{vasileios.chousionis@uconn.edu}
\author{Valentino Magnani}
\address{Dipartimento di Matematica, Universit\`a di Pisa, Largo Bruno Pontecorvo 5, 56127, Pisa, Italy}
\email{magnani@dm.unipi.it}
\author{Jeremy T. Tyson}
\address{Department of Mathematics \\ University of Illinois \\ 1409 West Green St. \\ Urbana, IL, 61801}
\email{tyson@illinois.edu}
\date{\today}
\thanks{V.C. was funded by the Academy of Finland Grant SA 267047 and Simons Collaboration Grant 521845. J.T.T. supported by NSF grants DMS-1201875 and DMS-1600650.
V.M. supported by the University of Pisa, Project PRA 2018 49.}
\begin{document}

\maketitle

\begin{abstract}
We initiate a classification of uniform measures in the first Heisenberg group $\Heis$ equipped with the Kor\'anyi metric $d_H$, that represents the
first example of a noncommutative stratified group equipped with a homogeneous distance. We prove that $1$-uniform measures are proportional to the spherical $1$-Hausdorff measure restricted to an affine horizontal line, while $2$-uniform measures are proportional to spherical $2$-Hausdorff measure restricted to an affine vertical line. It remains an open question whether $3$-uniform measures are proportional to the restriction of spherical $3$-Hausdorff measure to an affine vertical plane. We establish this conclusion in case the support of the measure is a vertically ruled surface. Along the way, we derive asymptotic formulas for the measures of small extrinsic balls in $(\Heis,d_H)$ intersected with smooth submanifolds. The coefficients in our power series expansions involve intrinsic notions of curvature associated to smooth curves and surfaces in $\Heis$.
\end{abstract}

\section{Introduction}

Let $(X,d)$ be a metric space. A Borel measure $\nu$ on $X$ is said to be {\it $s$-uniform}, where $s \ge 0$, if there exists a constant $c$ so that
\begin{equation}\label{eq:uniform-measure-condition}
\nu(B(x,r)) = cr^s
\end{equation}
for all $x \in X$ and all $r>0$.

Uniform measures feature prominently in the proof of Preiss' celebrated Density Theorem \cite{pre:big} since they occur as tangent measures at points where the $s$-density exists. The classification of uniform measures in Euclidean space is a challenging problem which remains largely open. Marstrand's theorem \cite{mar:density} guarantees that $s$-uniform measures can exist in $\R^n$ only for integer values of $s$. Flat measures are the canonical examples of uniform measures. Recall that a measure $\nu$ in $\R^n$ is said to be {\it $m$-flat}, where $m$ is an integer between $1$ and $n$, if $\nu = c \, \cH^m \restrict V$ for some $m$-dimensional affine subspace $V$ of $\R^n$ and some $c>0$. All $1$- and $2$-uniform measures are flat. However---and this fact is largely responsible for the complexity in the proof of Preiss' Density Theorem---there are non-flat uniform measures. The first example is due to Kowalski and Preiss \cite{kp:besicovitch}, who showed that the measure $\cH^3 \restrict \Sigma$ is a $3$-flat measure in $\R^4$, where
$$
\Sigma = \{ (x_1,x_2,x_3,x_4) \in \R^4 \, : \, x_1^2 = x_2^2 + x_3^2 + x_4^2 \}
$$
is the light cone. In fact, Kowalski and Preiss have completely classified $(n-1)$-uniform measures in $\R^n$ for all values of $n$: every such measure is either $(n-1)$-flat or is proportional to $\cH^{n-1} \restrict M$ where $M$ is an $(n-1)$-dimensional algebraic variety in $\R^n$ which is isometric to $\Sigma \times \R^{n-4}$. A complete classification of $m$-uniform measures in $\R^n$ when $3 \le m < n-1$ remains unknown, although recent work of Nimer \cite{nim1}, \cite{nim2}, \cite{nim3} has improved our understanding and provided new examples. Kirchheim and Preiss \cite{kp:uniformly-distributed} proved that the support of a uniform measure in $\R^n$ is an analytic variety, and Tolsa \cite{tol:uniform} showed that such supports satisfy the David--Semmes `weak constant density' condition and hence are uniformly rectifiable.

In this paper, motivated by our desire to understand the relationship between densities and rectifiability in sub-Riemannian settings, we initiate a study of uniform measures in stratified groups. As a point of departure we consider the first Heisenberg group $\Heis$. We recall that $\Heis$ can be identified with $\R^3$, equipped with the nonabelian group law
\begin{equation}\label{eq:groupHeis}
(x_1,x_2,x_3)*(y_1,y_2,y_3) = (x_1+y_1,x_2+y_2,x_3+y_3-2x_1y_2+2x_2y_1).
\end{equation}
We will denote points of $\Heis$ with the notation $x=(x_1,x_2,x_3)$. We denote by $\pi:\Heis\to\R^2$ the projection $\pi(x_1,x_2,x_3)=(x_1,x_2)$. We will exploit the special symmetries of $\Heis$ via the rotations $\tilde R:\Heis\to\Heis$ defined by the matrix
\begin{equation}\label{eq:Rotations}
\left(\begin{array}{ccc}
R_1 & R_2 & 0 \\
0 & 0 & 1
\end{array}\right),
\end{equation}
where $R_1,R_2\in\R^2$ are orthogonal vectors and $-R_2=\bJ R_1$, with $\bJ$ as in \eqref{eq:IJ}.

Since the notion of uniform measure depends strongly on the choice of metric, we fix a specific metric on the Heisenberg group. We work throughout this paper with the Kor\'anyi metric
\begin{equation}\label{eq:d_H}
d_H(x,y) = ||x^{-1}*y||_H, \qquad ||(x_1,x_2,x_3)||_H^4 := (x_1^2+x_2^2)^2+x_3^2.
\end{equation}
One can verify that each matrix $\tilde R$ as in \eqref{eq:Rotations} is an isometry with respect to the distance $d_H$ of \eqref{eq:d_H}
and is also a Lie group homomorphism with respect to the group operation \eqref{eq:groupHeis}.

Our choice of this metric stems from three facts. First, Marstrand's density theorem holds for the metric space $(\Heis,d_H)$ \cite{ct:marstrand}; this fact ensures that supports of uniform measures in $(\Heis,d_H)$ are highly regular. Second, the rotational symmetry of the Kor\'anyi metric about the $x_3$-axis simplifies the area formula for submanifolds
in the Heisenberg group \cite{mv:intrinsic,mag:blowup2step,mag:blowup_heis}.
Finally, the explicit nature of this metric allows us to compute explicitly the coefficients of terms arising in local power series expansions of the volumes of small extrinsic balls along submanifolds. Such computations are crucial for us to obtain explicit differential equations governing the supports of uniform measures.

We now introduce the main results of the paper. Let $\mu$ be an $s$-uniform measure on $(\Heis,d_H)$. It follows from the main results of \cite{ct:marstrand} that $s=\overline{m}$ is an integer and the support of $\mu$ is a real analytic variety whose top dimensional stratum has topological dimension $m$ and Hausdorff dimension $\overline{m}$. (See \eqref{eq:stratification} for more information about the stratification of analytic varieties.) Here the pair $(m,\overline{m})$ can be any of the following:
\begin{equation}\label{list}
(0,0), \quad (1,1), \quad (1,2), \quad (2,3), \quad (3,4).
\end{equation}
Observe that these various cases are distinguished by the value of the sub-Riemannian Hausdorff dimension $\ovm$. The cases $\overline{m}=0$ and $\overline{m}=4$ being trivial, we restrict our attention to the cases $\ovm\in\{1,2,3\}$. In view of the results of Kowalski and Preiss, it is natural to ask the following question.

\begin{question}\label{conj}
Let $\mu$ be an $\ovm$-uniform measure on $\Heis$. Must $\mu$ be proportional to $\cS^{\ovm}\restrict G$, where $G$ is a left coset of a homogeneous subgroup of $\Heis$?
\end{question}

A homogeneous subgroup of $\Heis$ is a Lie subgroup which is invariant under the action of the dilation semigroup. We will use the modifier {\it affine} to refer to a left coset of such a subgroup. Question \ref{conj} asks whether the supports of uniform measures in $\Heis$ are {\it flat}, i.e., no exotic examples such as the Kowalski--Preiss light cone exist. Observe that there do exist uniform measures with non-flat support in Heisenberg groups of sufficiently high dimension, see \cite[Proposition 4.1]{ct:marstrand}.

It is easy to observe that any homogeneous subgroup of the Heisenberg group either contains the vertical direction or does not.
In the former case it is called a vertical subgroup and in the latter case it is called a horizontal subgroup, see for instance \cite{mss:intrinsic,Mag14} or \cite{ct:marstrand} for details. In view of this classification, Question \ref{conj} can be restated as follows.

\begin{question}\label{conj-revised}
Let $\mu$ be a $\ovm$-uniform measure on $\Heis$.
\begin{itemize}
\item[(1)] If $\ovm = 1$, must $\mu$ be proportional to $\cS^1\restrict L$ for some affine horizontal line $L$?
\item[(2)] If $\ovm = 2$, must $\mu$ be proportional to $\cS^2\restrict V$ for some affine vertical line $V$?
\item[(3)] If $\ovm = 3$, must $\mu$ be proportional to $\cS^3\restrict W$ for some affine vertical plane $W$?
\end{itemize}
\end{question}

Our first two results answer parts (1) and (2) of this question in the affirmative.

\begin{theorem}\label{thmA}
Let $\mu$ be a $1$-uniform measure on $\Heis$. Then $\mu$ is proportional to $\cS^1 \restrict L$ where $L$ is an affine horizontal line.
\end{theorem}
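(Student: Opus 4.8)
The plan is to combine the regularity theory imported from \cite{ct:marstrand} with an explicit expansion of the Kor\'anyi distance along the support, and to use this expansion to force the support to be a straight horizontal line.

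First I would extract the consequences of the case $(m,\ovm)=(1,1)$ of \eqref{list}. By \cite{ct:marstrand} the support $\Gamma=\spt\mu$ is a $1$-dimensional analytic variety. The uniform condition \eqref{eq:uniform-measure-condition} holds with the \emph{same} constant $c$ at every point of $\Gamma$, and this rules out singular behaviour: at a self-intersection the measure of a small ball would be too large, and at a relative endpoint it would be half as large, so in both cases the constant would change. Hence $\Gamma$ is a smooth analytic $1$-manifold without boundary. Crucially, $\ovm=1$ forces $\Gamma$ to be \emph{horizontal}: if $\Gamma$ had a nonhorizontal tangent at some point $p$, then along $\Gamma$ near $p$ one has $d_H(p,\gamma(s))\sim|s|^{1/2}$ by the defining formula \eqref{eq:d_H}, so $\Gamma$ would carry Hausdorff dimension $2$ there, contradicting $\ovm=1$. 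Finally, since $\mu$ is $1$-uniform and $\Gamma$ is $1$-rectifiable, a constant-density argument identifies $\mu=c_0\,\cS^1\restrict\Gamma$ for a constant $c_0>0$; it therefore suffices to show that $\Gamma$ is an affine horizontal line.

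The heart of the argument is a local distance expansion. Fix $p\in\Gamma$; after a left translation (an isometry of $(\Heis,d_H)$) I may assume $p=0$, and after a rotation $\tilde R$ as in \eqref{eq:Rotations} I may assume the horizontal tangent of $\Gamma$ at $0$ points in the $x_1$-direction. Parametrizing $\Gamma$ by horizontal arclength $s$, the curve is determined by the geodesic curvature $\kappa$ of its planar projection $\pi\circ\gamma$, and $\Gamma$ is a horizontal line exactly when $\kappa\equiv 0$. I would expand $\gamma_1,\gamma_2$ from $\kappa$, recover $\gamma_3$ from the horizontality relation $\dot\gamma_3=2\gamma_2\dot\gamma_1-2\gamma_1\dot\gamma_2$, and substitute into \eqref{eq:d_H} to obtain an expansion of the form
\begin{equation*}
d_H(0,\gamma(s))=|s|-\frac{\kappa(0)^2}{72}\,|s|^3+O(s^4).
\end{equation*}
Note that the only correction through this order is the odd, curvature-driven term; there is no $|s|^2$ term. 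Solving $d_H(0,\gamma(s))=r$ for the two endpoints $s_-<0<s_+$ and using $\mu=c_0\,\cS^1\restrict\Gamma$ then gives
\begin{equation*}
\mu(B(0,r))=c_0\,(s_+-s_-)=c_0\Big(2r+\frac{\kappa(0)^2}{36}\,r^3+O(r^4)\Big).
\end{equation*}
Comparing with \eqref{eq:uniform-measure-condition} forces the $r^3$ coefficient to vanish, hence $\kappa(0)=0$. Since $p$ was arbitrary, $\kappa\equiv 0$ on $\Gamma$, so every connected component of $\Gamma$ is a straight horizontal line.

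It remains to upgrade ``union of horizontal lines'' to a single line and to pin down the measure. Here the \emph{global} content of \eqref{eq:uniform-measure-condition}, namely its validity for all $r>0$ and not merely for small $r$, is decisive: if $\Gamma$ contained two distinct horizontal lines, then for $x$ on one of them and $r$ exceeding their mutual distance the ball $B(x,r)$ would meet the other, making $\mu(B(x,r))>2c_0r$ and violating uniformity. Thus $\Gamma=L$ is a single affine horizontal line. Since $(L,d_H|_L)$ is isometric to $(\R,|\cdot|)$, the identity $\mu(B(x,r))=cr$ for all $x\in L$ and $r>0$ pins $\mu$ down as a constant multiple of $\cS^1\restrict L$, completing the proof. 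The hard part will be the distance expansion: one must push the computation based on \eqref{eq:d_H} to high enough order to see that the intrinsic curvature $\kappa(0)$ enters the extrinsic ball volume at a definite order with a nonzero coefficient, and to confirm that no even-order ($r^2$) term interferes with the conclusion.
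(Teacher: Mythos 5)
Your outline follows the same route as the paper: the structure theory from \cite{ct:marstrand}, the identification $\mu = c_0\,\cS^1\restrict\spt\mu$, a local expansion of the ball measure whose $r^3$ coefficient is a positive multiple of $\kappa(0)^2$ (your coefficients $1/72$ and $1/36$ agree exactly with Proposition~\ref{propA}), and the exclusion of two parallel lines at the end. However, there is a genuine gap in your very first step, where you assert that uniformity alone rules out all singular behaviour, so that $\Gamma$ is a smooth analytic $1$-manifold without boundary. Your dichotomy ``self-intersection $\Rightarrow$ ball measure too large, endpoint $\Rightarrow$ ball measure too small'' misses the case of a \emph{corner}: two branches meeting transversally at a common endpoint $x$. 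After your curvature step these branches are straight horizontal segments, and along a horizontal ray emanating from the center of a ball the Kor\'anyi distance equals Euclidean arclength, so each of the two segments contributes exactly $r$ to $\mu(B_H(x,r))$ for small $r$. Thus $\mu(B_H(x,r)) = 2r$ exactly at a corner: balls centered at the singular point detect nothing, and your argument as written still permits $\spt\mu$ to be a polygonal chain of horizontal segments.

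This is precisely why the paper needs a separate, non-routine argument, Lemma~\ref{lem:1-uniform-no-corners}. There one centers balls not at the corner $x$ but at a nearby point $y$ on one segment, and exploits a strict inequality for the Kor\'anyi metric: normalizing $x = o$, $y = (-d_1,0,0)$, $z = (d_2\cos\theta, d_2\sin\theta,0)$, one computes
\begin{equation*}
d_H(y,z)^4 = d_1^4 + 4d_1^3d_2\cos\theta + 6d_1^2d_2^2 + 4d_1d_2^3\cos\theta + d_2^4 < (d_1+d_2)^4
\end{equation*}
whenever $\theta \not\equiv 0 \pmod{2\pi}$, i.e.\ $d_H(y,z) < d_1 + d_2$ strictly. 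Choosing the radius so that $B_H(y,r)$ just reaches past the corner onto the second segment, uniformity would force $d_1 + d_H(x,z) = r = d_H(y,z)$, contradicting the strict inequality (and in the other configuration, where the ball meets the second segment before swallowing $x$, the measure exceeds $2r$ outright). You must insert this argument (or an equivalent one) after your curvature step; with that repair, the rest of your outline is sound and coincides with the paper's proof.
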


\begin{theorem}\label{thmB}
Let $\mu$ be a $2$-uniform measure on $\Heis$. Then $\mu$ is proportional to $\cS^2\restrict V$, where $V$ is an affine vertical line.
\end{theorem}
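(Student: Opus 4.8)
The plan is to combine the regularity of the support furnished by \cite{ct:marstrand} with an explicit asymptotic expansion of $\mu(B(x,r))$ along the support, and to read off from the uniform condition a differential constraint forcing the support to be a vertical line. Since $\mu$ is $2$-uniform, \cite{ct:marstrand} guarantees that $\Sigma := \spt\mu$ is a real-analytic variety whose top-dimensional stratum has topological dimension $1$ and Hausdorff dimension $\ovm = 2$. A topologically one-dimensional set can carry Hausdorff dimension $2$ in $(\Heis,d_H)$ only if it is transverse to the horizontal distribution; thus, near a regular point, $\Sigma$ is an analytic curve whose tangent has a nonzero component along the vertical left-invariant field $X_3 = \partial_{x_3}$. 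It therefore suffices to show that such a curve is an integral curve of $X_3$, i.e. a vertical line, and that $\mu$ is the corresponding constant multiple of $\cS^2\restrict\Sigma$.

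First I would normalize locally. Fixing a regular point $p$ and using left translation (an isometry of $d_H$) I may assume $p = 0$; using a rotation $\tilde R$ as in \eqref{eq:Rotations} I may align the horizontal tangent data. Because the tangent is transverse to the horizontal distribution I can parametrize the curve as $\gamma(t) = (a(t), b(t), t)$ with $a(0) = b(0) = 0$, with $a,b$ real-analytic, and write $\mu = g\, dt$ for a density $g$ along $\gamma$. The key elementary computation is
\[
d_H(0,\gamma(t))^4 = (a(t)^2 + b(t)^2)^2 + t^2 =: h(t),
\]
an analytic function with $h(t) = t^2 + O(t^4)$; working with $h$ rather than with $d_H$ itself circumvents the non-smoothness of the Kor\'anyi gauge at the origin. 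Solving $h(t) = r^4$ produces the two endpoints $t_-(r) < 0 < t_+(r)$ of the parameter interval $\gamma^{-1}(B(0,r))$, each expandable in powers of $r^2$, and integrating the density gives
\[
\mu(B(0,r)) = \int_{t_-(r)}^{t_+(r)} g(t)\,dt = \sum_{k\ge 1} c_k\, r^{2k}.
\]
The leading coefficient is $c_1 = 2g(0)$, which fixes $c = 2g(0)$; the $r^4$ coefficient vanishes identically; and the first genuinely geometric coefficient, at order $r^6$, couples the quantity $(\dot a(0)^2 + \dot b(0)^2)^2$ with the curvature of the density. The uniform condition $\mu(B(0,r)) = cr^2$ forces $c_k = 0$ for all $k \ge 2$, producing a hierarchy of differential relations among the derivatives of $a,b$ (the intrinsic curvatures of $\gamma$) and of $g$ at the base point. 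Since $p$ was an arbitrary regular point, these relations hold pointwise along $\Sigma$.

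The heart of the argument, and the step I expect to be the main obstacle, is to extract from this hierarchy the conclusion that the horizontal velocity of $\gamma$ vanishes identically. The difficulty is that the $r^6$ relation does not immediately decouple the geometry from the measure: it mixes $(\dot a^2 + \dot b^2)^2$ with second derivatives of the density $g$. To break this coupling I would first identify $\mu$, on the regular part, with a constant multiple of $\cS^2\restrict\Sigma$ and compute the corresponding density via the sub-Riemannian area formula for vertical curves (the extrinsic-ball asymptotics advertised in the abstract), thereby expressing $g$ in terms of the vertical speed $1 - 2b\dot a + 2a\dot b$. With $g$ so determined, the vanishing of the $r^6$ coefficient reduces to a nonnegative multiple of $\dot a(0)^2 + \dot b(0)^2$, whence $\dot a(0) = \dot b(0) = 0$. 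As this holds at every point, $\dot\gamma$ is everywhere proportional to $X_3$, so $\gamma$ is an integral curve of $\partial_{x_3}$, i.e. the vertical line $V$ through $p$; equivalently $h(t) \equiv t^2$, forcing $a \equiv b \equiv 0$. A final connectedness and stratification argument — the tangent being everywhere a vertical line, $\Sigma$ has no lower-dimensional strata and is a single such line — upgrades the local statement to $\Sigma = V$ and $\mu = c'\,\cS^2\restrict V$ globally.
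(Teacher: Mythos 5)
Your overall strategy---analytic structure of the support from \cite{ct:marstrand}, a power-series expansion of $\mu(B(x,r))$ along a curve in the one-dimensional stratum, and the uniformity condition forcing all coefficients beyond the leading one to vanish---is the same as the paper's. But the step you yourself flag as ``the main obstacle'' is resolved incorrectly, and the error is fatal. With your parametrization $\gamma(t)=(a(t),b(t),t)$, $a(0)=b(0)=0$, and with the density correctly computed from the area formula ($g$ proportional to $|1+2a\dot b-2b\dot a|$), a direct computation shows that the coefficient of $r^6$ in $\mu(B(0,r))$ is proportional to
\[
-\bigl(\dot a(0)^2+\dot b(0)^2\bigr)^2+\tfrac23\bigl(\dot a(0)\ddot b(0)-\ddot a(0)\dot b(0)\bigr),
\]
which is exactly the paper's coefficient $\bb_1=-g_{11}^2+\tfrac23\omega_{12}$ of \eqref{eqnB1}. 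This is \emph{not} a nonnegative (or nonpositive) multiple of $\dot a(0)^2+\dot b(0)^2$: the second term is a signed symplectic quantity that can be positive and exactly compensate the first. Vanishing of this coefficient is equivalent to the relation $\sigma=\tfrac23 k$ between the speed and the planar curvature of the projection (the paper's \eqref{EQ1}), which has many solutions with nonzero speed---pointwise, and even identically along a curve. So the $r^6$ relation alone cannot yield $\dot a(0)=\dot b(0)=0$, and your central implication ``uniform $\Rightarrow$ vertical'' is unproven.

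This is precisely why the paper pushes the expansion through order $r^{14}$: setting the $r^{10}$ and $r^{14}$ coefficients \eqref{eqnB2}, \eqref{eqnB3} to zero and substituting $\sigma=\tfrac23 k$ produces the nonlinear ODEs \eqref{PDE-B2} and \eqref{PDE-B3} for the speed (Lemma \ref{lemQ}), and only the \emph{combination} of these two equations reduces to $81\sigma^6+32\dot\sigma^2=0$ as in \eqref{PDE-B4}, which finally forces $\sigma\equiv 0$ (Proposition \ref{thmB1}). Your proposal contains no substitute for this higher-order analysis. A secondary, more repairable omission: you never rule out horizontal points on the curves of the one-dimensional stratum, which the paper does in Lemma \ref{lemP} by a blow-up argument showing the density ratio at a horizontal point is strictly less than $2$; in your scheme this could in principle be patched by analyticity once verticality is established on the dense set of nonhorizontal points, but the gap at order $r^6$ cannot.
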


Part (3) of Question \ref{conj-revised} remains open. However, we are able to answer the question in a special case.

\begin{theorem}\label{thmC}
Let $\mu$ be a $3$-uniform measure on $\Heis$. Then $\spt\mu$ is a two-dimensional real analytic variety. If $\spt\mu$ is a vertically ruled real analytic surface, then $\mu$ is proportional to $\cS^3\restrict W$, where $W$ is an affine vertical plane.
\end{theorem}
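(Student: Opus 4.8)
The plan is to convert $3$-uniformity into a rigidity statement for the base curve of the ruled surface, by matching the forced equality $\mu(B(x,r))=cr^3$ against an asymptotic expansion of $\mu(B(x,r))$ whose first nontrivial coefficient measures the curvature. The opening assertion, that $\spt\mu$ is a $2$-dimensional real analytic variety, is inherited from the Marstrand-type regularity of \cite{ct:marstrand}; the same constant-density/rectifiability results give $\mu = c\,\cS^3\restrict\spt\mu$ for a constant $c>0$, which I use freely. So I assume $S:=\spt\mu$ is a vertically ruled analytic surface and write it locally as $S=\pi^{-1}(\gamma)$ for a planar real analytic curve $\gamma=\pi(S)$; the goal becomes to show that $\gamma$ is a straight line.

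First I would normalize at an arbitrary point. Left translations of $\Heis$ are $d_H$-isometries that preserve vertical lines and descend under $\pi$ to Euclidean translations of $\R^2$, while the rotations $\tilde R$ of \eqref{eq:Rotations} are $d_H$-isometries descending to Euclidean rotations; hence I may carry a chosen point $p\in S$ to the origin and arrange $\gamma$ to be a graph $\{(t,f(t))\}$ with $f(0)=f'(0)=0$, so that the Euclidean curvature of $\gamma$ at $p$ equals $\kappa:=f''(0)$. A vertically ruled surface is everywhere non-characteristic: with $u=x_2-f(x_1)$ one has $\nabla_H u=(-f'(x_1),1)$, so the horizontal normal never vanishes and there are no characteristic-point pathologies. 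Moreover the Euclidean normal of $S$ is horizontal, so the rotational symmetry of $d_H$ makes the factor relating $\cS^3$ to Euclidean surface area a universal constant; since $|\nabla_H u|=|\nabla u|=\sqrt{1+f'^2}$, in the parameters $(t,s)\mapsto(t,f(t),s)$ the measure takes the form $\mu = c\sqrt{1+f'(t)^2}\,dt\,ds$ for a constant $c>0$.

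The heart of the matter is the expansion. Setting $g(t):=t^2+f(t)^2$, the Kor\'anyi condition $d_H(0,\cdot)\le r$ reads $g(t)^2+s^2\le r^4$, and integrating out $s$ gives
\begin{equation*}
\mu(B(0,r)) = 2c\int_{\{g(t)\le r^2\}}\sqrt{1+f'(t)^2}\,\sqrt{r^4-g(t)^2}\,dt .
\end{equation*}
Substituting $t=r\tau$ and using $f(t)=\tfrac{\kappa}{2}t^2+O(t^3)$, the leading term is $2c\,r^3\int_{-1}^1\sqrt{1-\tau^4}\,d\tau$, there is no $r^4$ term, and the first correction appears at order $r^5$: one contribution $\tfrac{\kappa^2}{2}r^2\tau^2$ comes from the density $\sqrt{1+f'^2}$ and one contribution $-\tfrac{\kappa^2}{4}r^2\tau^6/(1-\tau^4)$ from $\sqrt{r^4-g^2}$, so the $r^5$ coefficient is $c\kappa^2\int_{-1}^1\bigl(\tfrac12\tau^2\sqrt{1-\tau^4}-\tfrac14\tau^6/\sqrt{1-\tau^4}\bigr)d\tau$. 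A Beta-function evaluation shows this integral equals a nonzero positive constant (a multiple of $\Gamma(3/4)/\Gamma(1/4)$), so the $r^5$ term is a strictly positive multiple of $\kappa^2$. Since $3$-uniformity forces $\mu(B(0,r))$ to be a constant times $r^3$, every higher coefficient must vanish; in particular $\kappa=0$.

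Finally, because $p\in S$ was arbitrary and left translations and rotations preserve the Euclidean curvature of $\gamma$, the curvature of $\gamma$ vanishes identically, so by analyticity $\gamma$ is a straight line and $S=\pi^{-1}(\gamma)=W$ is an affine vertical plane; a $3$-uniform measure supported on the coset $W$ of a vertical subgroup is translation-invariant along $W$ and hence proportional to $\cS^3\restrict W$. I expect the main obstacle to be the expansion step itself: producing the asymptotic formula for $\mu(B(0,r))$ to order $r^5$ with rigorous control of the remainder near the boundary $g(t)=r^2$, and confirming that the $\kappa^2$-coefficient does not vanish. This is exactly where the explicitness of the Kor\'anyi metric and the machinery for extrinsic balls developed earlier are essential; by contrast, the reduction to a non-characteristic ruled surface and the passage from $\kappa\equiv 0$ to flatness are comparatively routine.
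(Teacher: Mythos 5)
Your proposal is correct, but it follows a genuinely different route from the paper's. The paper deduces Theorem \ref{thmC} from the general expansion of Theorem \ref{theoC}, valid at \emph{every} noncharacteristic point of a $C^4$ surface: $3$-uniformity forces the PDE $\cH_0^2+\tfrac32\cP_0^2+4\vec{e}_1(\cP_0)=0$, which on a vertically ruled surface (where $\cP_0\equiv 0$) reduces to H-minimality $\cH_0=0$; the paper then invokes the sub-Riemannian Bernstein theorem of Danielli--Garofalo--Nhieu--Pauls to conclude that $\spt\mu$ lies in a vertical plane. You instead exploit the ruled structure from the outset: writing $\spt\mu=\pi^{-1}(\gamma)$ and integrating out the vertical fiber collapses $\mu(B_H(p,r))$ to a one-dimensional integral over the base curve, whose expansion you compute directly, getting a leading $r^3$ term, no $r^4$ term, and an $r^5$ coefficient that is a positive multiple of $k_\gamma(0)^2$. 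This bypasses both the Kor\'anyi polar-coordinate machinery behind Theorem \ref{theoC} and the Bernstein theorem (which is in fact overkill for ruled surfaces, since there $\cH_0=\pm k_\gamma$, so $\cH_0=0$ already says the base curve is a line); and your coefficient is consistent with the paper's PDE, which on a ruled surface leaves exactly $\cH_0^2=k_\gamma^2$. Three remarks. First, the remainder control near the boundary $g(t)=r^2$ that you flag as the main obstacle is real but readily handled: substituting $g(t)=r^2v$ fixes the domain of integration to $[0,1]$, after which the expansion is uniform and confirms your coefficients; in particular the $r^5$ coefficient evaluates to a positive multiple of $\Gamma(3/4)/\Gamma(1/4)$ times $\kappa^2$, as you claim (you dropped an overall factor of $2$ relative to your own normalization, which is immaterial for the sign argument). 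Second, like the paper, you pass quickly over the final globalization (why the support is one complete plane rather than, say, two parallel vertical planes or a ruled surface over a proper subsegment), but the standard ball-growth argument used in the proof of Theorem \ref{thmA} disposes of this. Third, the trade-off: the paper's longer route produces the intrinsic PDE \eqref{PDE-C} for arbitrary noncharacteristic surfaces, which it needs elsewhere (e.g., for Proposition \ref{prop:no-quadratic-t-graphs-as-3-uniform-supports}), whereas your route gives a self-contained and elementary proof of precisely the ruled case stated in the theorem.
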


Following the method developed by Kowalski and Preiss, we derive Theorems \ref{thmA}, \ref{thmB} and \ref{thmC} from local power series expressions for the spherical Hausdorff measure of an extrinsic ball of $(\Heis,d_H)$ intersected with a smooth submanifold. Such formulas, which appear in Propositions \ref{propA}, \ref{propB} and Theorem~\ref{theoC}, are of independent interest. In \cite{clt:uniform-rectifiability}, the first and the second author use \eqref{eqnC} to study potential definitions of uniform rectifiability in the Heisenberg group. These local power series expressions would also be useful in studying the more general problem of characterizing {\it locally uniform measures}, i.e., measures for which the uniformity condition \eqref{eq:uniform-measure-condition} holds at each point of the support for balls of sufficiently small radii.

In the proof of Theorem \ref{thmC} we use an important tool of independent interest,
that is the following geometric PDE
\begin{equation}\label{PDE-C}
\cH_0^2 + \frac32 \cP_0^2 + 4 \vec{e}_1(\cP_0) = 0,
\end{equation}
that is satisfied at all noncharacteristic points of any real analytic surface without boundary contained in the support of a $3$-uniform measure in $\Heis$.
Here $\cH_0(x)$ denotes the horizontal mean curvature, $\cP_0(x)$ denotes the Arcozzi--Ferrari imaginary curvature (curvature of the metric normal), and $(\vec{e}_1)_x$ denotes the characteristic vector at a noncharacteristic point $x$ in a surface $\Sigma$. We refer to \eqref{H0-level-set}, \eqref{P0-level-set} and \eqref{e1-level-set} for definitions of these quantities. 

In Euclidean space the support of a uniform measure can be given by a quadratic surface, as the light cone in $\R^4$ that has been previously mentioned.
It is then reasonable to establish whether analogous cases may occur in $\Heis$.
An application of \eqref{PDE-C} answers this question in the case
of {\it quadratic $x_3$-graphs}, namely 2-dimensional graphs of a quadratic form in the variables $x_1$ and $x_2$.
The fact that the $x_3$ variable has degree two
shows that the family of these quadratic surfaces is invariant by 
the dilations of the Heisenberg group.

\begin{proposition}\label{prop:no-quadratic-t-graphs-as-3-uniform-supports}
No quadratic $x_3$-graph can be the support of a $3$-uniform measure.
\end{proposition}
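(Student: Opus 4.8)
The plan is to exhibit the defining function of the surface, compute the three curvature quantities appearing in \eqref{PDE-C} explicitly, and show that the resulting identity cannot hold. First I would write a quadratic $x_3$-graph as the zero set of $u(x) = x_3 - q(x_1,x_2)$, where $q$ is a homogeneous quadratic form; homogeneity of degree two in $(x_1,x_2)$ is exactly what renders the surface invariant under the Heisenberg dilations. Using the rotational isometries $\tilde R$ of \eqref{eq:Rotations}, which act as $SO(2)$ on the $(x_1,x_2)$-plane, fix $x_3$, and are group automorphisms, I would diagonalize $q$ and reduce to $q(x_1,x_2) = \alpha x_1^2 + \beta x_2^2$, leaving only the two shape parameters $\alpha,\beta$. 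Each such surface is a real analytic surface without boundary once its characteristic locus is deleted, so if it supported a $3$-uniform measure then \eqref{PDE-C} would have to hold at every noncharacteristic point.

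The key structural observation is that, for the left-invariant horizontal frame $X_1,X_2$, the derivatives $X_1 u = 2x_2 - q_{x_1}$ and $X_2 u = -2x_1 - q_{x_2}$ are linear in $(x_1,x_2)$ and independent of $x_3$. Consequently $W := |\nabla_H u| = ((X_1u)^2 + (X_2 u)^2)^{1/2}$ is homogeneous of degree one, the horizontal unit normal and the characteristic vector $\vec{e}_1$ of \eqref{e1-level-set} are homogeneous of degree zero, and the curvatures $\cH_0$ and $\cP_0$ of \eqref{H0-level-set} and \eqref{P0-level-set} are rational functions homogeneous of degree $-1$ in $(x_1,x_2)$; hence $\vec{e}_1(\cP_0)$ is homogeneous of degree $-2$. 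Thus all three terms of \eqref{PDE-C} are homogeneous of the same degree $-2$, as they must be. Crucially, a short computation from \eqref{P0-level-set} shows $\cP_0 = c/W$ with $c\ne 0$ a constant (this reflects the fact that $\cP_0$ is, up to a constant factor, $Tu/|\nabla_H u|$, which is nonzero precisely because the surface is a genuine $x_3$-graph), in sharp contrast to the vertical planes, for which $\cP_0 \equiv 0$ and \eqref{PDE-C} holds trivially.

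I would then clear denominators: multiplying \eqref{PDE-C} by $W^6$ converts it into a \emph{polynomial} identity, a homogeneous quartic in $(x_1,x_2)$ of the form $N_H^2 + \tfrac32 c^2 W^4 + 4 N_E\,W^2 = 0$, where $N_H$ and $N_E$ are the polynomial numerators of $\cH_0 = N_H/W^3$ and $\vec{e}_1(\cP_0) = N_E/W^4$. Matching the five coefficients of $x_1^4, x_1^3x_2, x_1^2x_2^2, x_1x_2^3, x_2^4$ produces an overdetermined polynomial system in $(\alpha,\beta)$, and the goal is to show it has no solution for any nonzero quadratic form. Rather than solving the full system, I expect it to be cleaner to test the identity along the principal rays $x_2=0$ and $x_1=0$, where one of $X_1u, X_2u$ simplifies, and to exploit positivity: since $\cH_0^2\ge 0$ and $\tfrac32\cP_0^2>0$, the equality forces $\vec{e}_1(\cP_0)<0$ at a definite rate everywhere, and the two ray evaluations yield incompatible constraints. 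The degenerate instance already signals the obstruction: for $q\equiv 0$ (the horizontal plane) a direct computation leaves a nonzero constant multiple of $|(x_1,x_2)|^{-2}$ on the left of \eqref{PDE-C}.

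The main obstacle is twofold. First, one must correctly transcribe the Arcozzi--Ferrari imaginary curvature $\cP_0$ and the characteristic vector $\vec{e}_1$ from \eqref{P0-level-set} and \eqref{e1-level-set} and carry the routine but lengthy differentiations through to the explicit quartic, tracking the sign and orientation conventions in the definition of $\vec{e}_1$ so that $\vec{e}_1(\cP_0)$ is well defined. Second, and more seriously, one must show the quartic identity fails for \emph{all} quadratic forms, uniformly in $(\alpha,\beta)$. A conceptually clean reformulation of this last step restricts \eqref{PDE-C} to an integral curve $\gamma(s)$ of $\vec{e}_1$: setting $\psi(s)=\cP_0(\gamma(s))\ne 0$, equation \eqref{PDE-C} becomes the exact relation $\psi' = -\tfrac14\cH_0^2 - \tfrac38\psi^2$, whence $(1/\psi)' = \tfrac14\cH_0^2/\psi^2 + \tfrac38 \ge \tfrac38$; comparing the forced at-least-linear growth of $1/\psi = W/c$ along characteristic curves with the rate at which these curves actually approach the characteristic locus of the quadratic graph produces the contradiction. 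I expect this comparison, or equivalently the uniform inconsistency of the coefficient equations, to be the crux of the argument.
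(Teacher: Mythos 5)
There is a genuine gap, and it is fatal to the strategy as stated: equation \eqref{PDE-C} alone does \emph{not} fail for every quadratic $x_3$-graph. Your plan is to clear denominators, match the coefficients of the resulting homogeneous quartic in $(x_1,x_2)$, and show the system in $(\alpha,\beta)$ has no solution --- but it does have one. For the circular paraboloid $x_3=\lambda(x_1^2+x_2^2)$ with $\lambda^2=2$, the quantity $\cH_0^2+\tfrac32\cP_0^2+4\vec{e}_1(\cP_0)$ vanishes identically on the noncharacteristic set (in the paper's notation, the cleared quartic \eqref{eq:expressAB} collapses, when $\lambda_1=\lambda_2=\lambda$, to $(\lambda^2-2)(A^2+B^2)^2$, which is identically zero for $\lambda^2=2$; equivalently, in Example \ref{ex:paraboloids} the PDE holds exactly when $\tan^2\alpha=2$). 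So your "two ray evaluations" cannot be incompatible in this case, and your fallback argument along integral curves of $\vec{e}_1$ fares no better: on this surface $1/\psi$ is a negative constant multiple of $\sqrt{x_1^2+x_2^2}$, the characteristic curves run radially inward at constant speed, the inequality $(1/\psi)'\ge\tfrac38$ is satisfied (one computes $(1/\psi)'=\tfrac12$), and the curves simply terminate at the characteristic point in finite time, so the forced growth of $1/\psi$ yields no contradiction. Any argument using only \eqref{PDE-C} must fail for this surface.

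The missing idea --- and the reason the paper's proof has a second, independent part --- is a constraint coming from the characteristic point itself. The origin is a characteristic point of the circular paraboloid and lies in the support, so $3$-uniformity forces the density ratio $\lim_{r\to0}r^{-3}\mu(B_H(0,r))$ there to equal the same constant $\omega_H$ that appears as the leading coefficient of \eqref{eqnC} at noncharacteristic points. The paper computes in Example \ref{ex:paraboloids} that this limit equals $\tfrac{4\pi}{3}\sqrt{\cos\alpha}$, so uniformity requires $\cos\alpha=\Gamma(\tfrac14)^4/(8\pi^3)\approx 0.6966$ as in \eqref{eq:for-alpha-1}, whereas the PDE requires $\tan^2\alpha=2$, i.e.\ $\cos\alpha=3^{-1/2}\approx 0.5774$; these are incompatible, and that is what eliminates the circular case. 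For what it is worth, the rest of your outline is sound and essentially the paper's argument: the rotation reduction to diagonal form, the homogeneity bookkeeping, and the coefficient matching do dispose of the non-circular case, since the cross term $8(\lambda_2-\lambda_1)AB(A^2+B^2)$ forces $\lambda_1=\lambda_2$; moreover matching monomials directly in $(x_1,x_2)$ neatly avoids the paper's separate treatment of the degenerate case $\lambda_1\lambda_2=-1$ where $\bL-\bJ$ is not invertible. But without the characteristic-point density comparison, the proof cannot be completed.
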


To conclude this introduction, we outline the proofs of our main theorems (Theorem \ref{thmA}, \ref{thmB} and \ref{thmC}). Our sketch of the proof of Theorem \ref{thmC} will illustrate the role of \eqref{PDE-C}.

To prove Theorem \ref{thmA}, we write the support $M$ of $\mu$ as a stratified analytic $1$-variety:
$$
M = M_{(0)} \cup M_{(1)}.
$$
The area formula \eqref{eq:area-formula} joined with Theorem~\ref{zeroth2} immediately show that each curve contained in the $1$-dimensional stratum $M_{(1)}$ is fully horizontal. We prove that every such curve has vanishing horizontal curvature, and hence is a horizontal line. It remains to show that there is no $0$-dimensional stratum. We rule out endpoints and junction points of degree at least three by volume considerations, and we rule out corners (junction points of degree two) by a geometric argument. Eventually, we conclude that $M_{(0)} = \emptyset$ and $M_{(1)} = M = \spt\mu$ is a single affine horizontal line.

To prove Theorem \ref{thmB}, we again write the support $M$ of $\mu$ as a stratified analytic $1$-variety:
$$
M = M_{(0)} \cup M_{(1)}.
$$
By a blow-up argument, similar to that of \cite{mag:nnote}, we show that each curve contained in the $1$-dimensional stratum $M_{(1)}$ is fully nonhorizontal. If $\Sigma:I \to \Heis$ is such a curve, and $\gamma$ is the projection of $\Sigma$ to $\R^2$, then either $\Sigma$ is contained within a vertical line, or $\sigma:=|\tfrac{d}{ds}\gamma| > 0$ on a dense open set $J \subset I$. In the latter case we derive several differential equations satisfied by the speed and curvature of $\gamma$. Specifically, we show that the speed $\sigma$ and curvature of $\gamma$ are proportional, and that $\sigma$ satisfies several additional nonlinear ordinary differential equations. We show that there are no nonzero solutions to this ODE system. Hence the second case cannot occur, and so $\Sigma$ must be entirely contained within a vertical line. Since the support of $\mu$ must be connected, we conclude that $M_{(0)} = \emptyset$ and $M_{(1)} = M = \spt \mu$ is a single affine vertical line.

Finally, we discuss the proof of Theorem \ref{thmC}. Our assumption is that the support $M$ of $\mu$ is a real analytic, noncharacteristic, vertically ruled surface. In this situation the geometric PDE \eqref{PDE-C} reduces to $\cH_0 = 0$. By the Bernstein theorem for the Heisenberg group, $M$ must be contained in a vertical plane. Volume considerations ensure that there are no boundary edges, hence $M$ is a single affine vertical plane.

\

\paragraph{\bf Acknowledgements.} Research for this paper was initiated during a `Research in Pairs' event at the Centro Internazionale per la Ricerca Matematica in Trento, Italy in June 2014. The authors wish to gratefully acknowledge the hospitality of CIRM and the University of Trento during this period. The first and third authors would also like to thank the Department of Mathematics at the University of Bern for its hospitality during June 2015, when additional research for the paper was conducted. We would also like to thank Andrea Merlo for valuable comments.

\section{Background}\label{sec:background}

\subsection{Hausdorff measures and spherical Hausdorff measures}\label{subsec:Hausdorff}

We recall that the Hausdorff $s$-measure of a set $A$ in a metric space $(X,d)$ is defined to be
$$
\cH^s(A) = \lim_{\delta \to 0} \inf \left\{ \sum_{i=1}^\infty (\diam E_i)^s \, : \, A \subset \bigcup_{i=1}^\infty E_i, \quad \diam E_i < \delta \right\},
$$
while the spherical Hausdorff measure $\cS^s(A)$ of $A$ is defined by the same formula, but with the additional assumption that the sets $E_i$ are restricted to be balls $B(x_i,r_i)$ of $(X,d)$. It is well known that $\cH^s$ and $\cS^s$ are comparable on any metric space:
\begin{equation}\label{H-S-comparable}
\cH^s(A) \le \cS^s(A) \le 2^s \cH^s(A) \qquad \forall \, A \subset X,
\end{equation}
and that there exist sets (even subsets of Euclidean space) for which these two measures differ. In the Euclidean setting, $\cH^m$ and $\cS^m$ coincide for $m$-rectifiable sets. A corresponding result is not yet known in the sub-Riemannian setting. See \cite[Remark 4.41]{sc:GMTtopics} for the latest information on this question. For the purposes of the area formula on submanifolds discussed in the following subsection, we focus attention to the spherical Hausdorff measure. Note that \eqref{H-S-comparable} ensures that the notion of Hausdorff dimension is the same when defined by either the class of Hausdorff measures or spherical Hausdorff measures:
$$
\dim A = \inf \{ s \ge 0 : \cH^s(A) = 0 \} = \inf \{ s \ge 0 : \cS^s(A) = 0 \}.
$$
We remark that we denote by $\dim A$ the Hausdorff dimension of a set $A$ in any metric space. Our primary interest will be in subsets of the first Heisenberg group $\Heis$ endowed with the Kor\'anyi metric.

\subsection{Sub-Riemannian differential geometry of curves and surfaces in $\Heis$}\label{subsec:curvatures}

We consider the left-invariant vector fields $X_1,X_2,X_3$ in $\Heis$, which agree with
the standard unit vectors $e_1,e_2,e_3$ at $o=(0,0,0)$. Denoting $x=(x_1,x_2,x_3)$ we have
$$
X_1(x) = e_1 + 2x_2e_3, \quad X_2(x) = e_2 - 2x_1e_3 \quad \mbox{and}
\quad X_3(x) = e_3.
$$
As first-order differential operators, these vector fields act as follows:
$$
(X_1)_x = \partial_1 + 2x_2\partial_3, \quad (X_2)_x = \partial_2 -
2x_1\partial_3 \quad \mbox{and} \quad (X_3)_x = \partial_3.
$$
We observe that $[X_1,X_2]=-4X_3$, thus the horizontal distribution $H\Heis$, whose fiber at $x\in\Heis$ is $H_x\Heis=\spa\{X_1(x),X_2(x)\}$, is completely nonintegrable. We also refer to $H_x\Heis$ as the {\it horizontal tangent space} at $x$. The horizontal distribution can also be defined fiberwise as the kernel of the defining contact $1$-form
\begin{equation}\label{contact-form}
\vartheta = dx_3 + 2x_1 \, dx_2 - 2x_2 \, dx_1 \, .
\end{equation}
The length of a horizontal tangent vector $v \in H_x\Heis$ is defined to be $|v|_x := \sqrt{c_1^2+c_2^2}$, where $v = c_1 (X_1)_x + c_2 (X_2)_x$. For a function $f:\Heis\to\R$, we denote by $\nabla_0f = X_1fX_1+X_2fX_2$ the horizontal gradient of $f$ and by $|\nabla_0f| = \sqrt{(X_1f)^2+(X_2f)^2}$ its norm.

We denote by $\cJ:H\Heis \to H\Heis$ the fiberwise linear operator defined in the horizontal distribution by the rule $\cJ(X_1) = X_2$ and $\cJ(X_2) = -X_1$.

\

\paragraph{\bf Curves.} A point $x = \Sigma(s)$ on a $C^1$ curve $\Sigma$ in $\Heis$ is {\it horizontal} if $\vartheta(\dot\Sigma(s)) = 0$, i.e., if $\dot\Sigma(s)$ is a horizontal tangent vector at $x$. We say that $\Sigma$ is {\it horizontal} if every point on $\Sigma$ is horizontal. The {\it Carnot--Carath\'eodory length} of a horizontal curve $\Sigma:I \to \Heis$ is defined to be
$$
\length_{cc}(\Sigma) = \int_I |\dot\Sigma(s)|_{\Sigma(s)} \, ds,
$$
where $|v|_x$ denotes the horizontal length of a horizontal tangent vector $v$ at $x \in \Heis$. Every horizontal curve can be reparameterized by arc length, in which case $|\dot\Sigma(s)|_{\Sigma(s)} = 1$ for all $s$. Observe that if $\Sigma$ is a horizontal curve in $\Heis$ parameterized by arc length, then $\gamma = \pi \circ \Sigma$ is a curve in $\R^2$ parameterized by arc length: $|\dot\gamma(s)| = 1$ for all $s$.

A $C^1$ curve $\Sigma$ in $\Heis$ is {\it fully nonhorizontal} if $\vartheta(\dot\Sigma(s)) \ne 0$ for all $s$. A fully nonhorizontal curve $\Sigma$ is said to be {\it parameterized by homogeneous arclength} if
$$
\vartheta(\dot{\Sigma}(s)) = 1
$$
for all $s$. This terminology is justified by the area-type formula
\[
\cS^2(\Sigma)=\int_I |\vartheta(\dot\Sigma(s))| ds,
\]
that is a special case of \eqref{eq:area-formula}. The formula for $\cS^2(\Sigma)$  also shows that any fully nonhorizontal curve can always be reparameterized by homogeneous arclength.

\

\paragraph{\bf Surfaces.} A point $x$ on a smooth surface $\Sigma$ in $\Heis$ is said to be {\it noncharacteristic} if $T_x\Sigma \ne H_x\Heis$. The set of all characteristic points of $\Sigma$ is the {\it characteristic set}, denoted $C(\Sigma)$. A surface is {\it fully noncharacteristic} if $C(\Sigma) = \emptyset$.

Let $x$ be a noncharacteristic point on a smooth surface $\Sigma$. A {\it horizontal normal} to $\Sigma$ at $x$ is the projection of a Riemannian normal into the horizontal plane at $x$. If $\Sigma$ is given (locally at $x$) as a level set of a smooth function $u$, then
\begin{equation}\label{n0-level-set}
\vec{n}_0(x) = \nabla_0u(x)/|\nabla_0u(x)| = |\nabla_0u(x)|^{-1}(X_1uX_1 + X_2uX_2)
\end{equation}
is a choice of horizontal normal. Relative to a choice of horizontal normal,
we introduce the {\bf characteristic vector field} $\vec{e}_1 = \cJ(\vec{n}_0)$, defined at noncharacteristic points of a surface $\Sigma$. In the level set formulation,
\begin{equation}\label{e1-level-set}
\vec{e}_1(x) = |\nabla_0u(x)|^{-1}(X_1uX_2 - X_2uX_1)\,.
\end{equation}
There exists a unique unit-speed horizontal curve $c_x:(-\eps,\eps) \to \Sigma$, passing through $x$, whose velocity vector $\dot{c_x}(0) \in H_x\Heis\cap T_x\Sigma =: HT_x\Sigma$ agrees with the characteristic vector field $\vec{e}_1$ at $x$. The noncharacteristic portion $\Sigma \setminus C(\Sigma)$ is foliated by such curves; this is the so-called {\it Legendrian foliation}.

The {\it horizontal mean curvature} of $\Sigma$ at $x$, denoted $\cH_0(x)$, is the planar curvature of the projection of the Legendrian curve $c_x$ into the first two coordinates. In other words,
$$
\cH_0(x) := k_{\pi\circ c_x}(0).
$$
where $k_\gamma$ denotes the planar curvature of $\gamma$. We also call $\cH_0(x)$ the {\it horizontal curvature} of the space curve $c_x$ at the point $x$.
In the case when $\Sigma$ is a level set of a function $u$,
\begin{equation}\label{H0-level-set}
\cH_0 = X_1 \left( \frac{X_1u}{|\nabla_0u|} \right) +
X_2 \left( \frac{X_2u}{|\nabla_0u|} \right).
\end{equation}
See \cite[\S4.3]{cdpt:survey} for further information.

Next we recall the concepts of metric normal and imaginary curvature introduced by Arcozzi and Ferrari \cite{af1}, \cite{af2}. The {\it metric normal} at a noncharacteristic point $x \in \Sigma$, denoted $\cN_x\Sigma$, is the set of points $y \in \Heis$ such that $\dist_{cc}(y,\Sigma)=d_{cc}(y,x)$. Here $d_{cc}$ denotes the {\it Carnot-Carath\'eodory (C--C) metric}. According to \cite[Theorem 1.2]{af1}, $\cN_x\Sigma$ consists of a nontrivial arc contained in a CC geodesic passing through $x$, whose tangent vector at $x$ is the horizontal unit normal $\vec{n}_0$ to $\Sigma$. The {\it imaginary curvature} of $\Sigma$ at $x$, denoted $\cP_0(x)$, is the horizontal curvature of $\cN_x\Sigma$ at $x$. In other words, $\cP_0(x)$ is the signed curvature of the planar projection of $\cN_x\Sigma$. In the level set formalism,
\begin{equation}\label{P0-level-set}
\cP_0 = \frac{4X_3u}{|\nabla_0u|}.
\end{equation}
Observe that $\cP_0$ vanishes on an open subset $U$ of $\Sigma$ if and only if $U$ is vertically ruled; in the level set formalism this means that the defining function $u$ depends only on $x_1$ and $x_2$. The terminology `imaginary curvature' comes from \cite{af2}, where the same authors use the horizontal mean
and imaginary curvatures to study the horizontal Hessian of the Carnot--Carath\'eodory distance function $\delta_{\Sigma,cc}(x) = \dist_{cc}(x,\Sigma)$.
According to \cite[Theorem 1.1]{af2}, the horizontal Hessian of $\delta_{\Sigma,cc}$ at $x$ is given by
$$
\vec{n}_0(x) \otimes \vec{n}_0(x) \cdot ( \cH_0(x) \, \bI + \cP_0(x) \, \bJ ),
$$
where
\begin{equation}\label{eq:IJ}
\bI = \begin{pmatrix} 1 & 0 \\ 0 & 1 \end{pmatrix} \quad \mbox{and}
\quad \bJ = \begin{pmatrix} 0 & 1 \\ -1 & 0 \end{pmatrix}
\end{equation}
denote the $2\times 2$ identity and standard symplectic matrices. For this reason the authors of \cite{af2} also term $\cH_0(x)$ the {\it real curvature} of $\Sigma$ at $x$.

Note that the mean and imaginary curvatures, as well as the characteristic vector field, depend up to sign on a choice of normal to the surface, or equivalently, on a choice of the defining function $u$. However, $\cH_0^2$, $\cP_0^2$ and $\vec{e}_1(\cP_0)$ are all independent of such choice. The left hand side of the PDE \eqref{PDE-C} is therefore intrinsic to the surface.

\subsection{Area formula for submanifolds of the Heisenberg group}\label{subsec:area}

Propositions \ref{propA}, \ref{propB} and Theorem~\ref{theoC} derive from an area formula expressing the natural sub-Riemannian volume locally on a smooth submanifold of $(\Heis,d_H)$ in terms of a parameterization.

The topological dimension $m$ and the sub-Riemannian Hausdorff dimension $\ovm$ of a smooth submanifold $\Sigma$ in any stratified group are related by a formula of Gromov, \cite[0.6.B]{gro:cc}. This formula has an algebraic formulation in terms of the degree of points, that also leads to the computation of the appropriate spherical measure; see \cite{mv:intrinsic} and \cite{mag:towards_area}.

In the first Heisenberg group, the degree $d_\Sigma(x)$ a point $x$ of a $C^1$ submanifold $\Sigma$ has the following expression:
\begin{equation}\label{d:degree}
d_\Sigma(x)=\left\{
\begin{array}{ll}
1 & \text{if $\dim\Sigma=1$ and $T_x\Sigma\subset H_x\Heis$} \\
2 & \text{if $\dim\Sigma=1$ and $T_x\Sigma\not\subset H_x\Heis$} \\
2 & \text{if $\dim\Sigma=2$ and $T_x\Sigma=H_x\Heis$} \\
3 & \text{if $\dim\Sigma=2$ and $T_x\Sigma\not\subset H_x\Heis$}
\end{array}\right. .
\end{equation}
The algebraic definition of degree in any stratified group can be found in \cite{mv:intrinsic}. Gromov's formula for the Hausdorff dimension of a submanifold $\Sigma$ is
\[
\max_{x\in\Sigma} d_\Sigma(x).
\]
In view of the Frobenius theorem, any 2-dimensional smooth submanifold of $\Heis$ cannot contain only points of degree two, and hence the previous formulae show that all possible pairs $(m,\ovm)$ are given by the list in \eqref{list}. For instance a smooth curve $\Sigma$ has Hausdorff dimension $\ovm=1$ if and only if $\Sigma$ is everywhere horizontal and every smooth surface $\Sigma$ in $(\Heis,d_H)$ has Hausdorff dimension $\ovm$ equal to $3$.

The next result follows by joining \cite{mv:intrinsic}, \cite[(3)]{mag:blowup2step} and \cite[Proposition~4.5]{mag:blowup_heis}.

\begin{theorem}[Area formula on submanifolds of $(\Heis,d_H)$]\label{th:magnani-area-formula}
Let $\Sigma$ be a $C^{1,1}$ submanifold of topological dimension $m\in\{1,2\}$ and Hausdorff dimension $\ovm\in\{1,2,3\}$ in $(\Heis,d_H)$. Let $\Omega \subset \Sigma$ be a domain, parameterized by a mapping $\Phi:U \subset \R^m \to \Heis$. Then there exists a geometric constant
$\beta_d$, depending on $d_H$ and $(m,\ovm)$ such that
\begin{equation}\label{eq:area-formula}
\cS^{\ovm}(\Omega) = \beta_d \int_U |(\partial_{x_1}\Phi \wedge \partial_{x_2}\Phi \wedge \cdots \wedge \partial_{x_m}\Phi)_{(\ovm)}| \, dx,
\end{equation}
where $(v_1\wedge\cdots\wedge v_m)_{(d)}$ denotes the projection of the $m$-vector $v_1\wedge\cdots\wedge v_m$ into the $m$-vectors of degree $\ovm$.
\end{theorem}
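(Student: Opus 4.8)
The plan is to deduce \eqref{eq:area-formula} from the general area formula for the spherical Hausdorff measure on $C^1$ submanifolds of stratified groups developed in \cite{mv:intrinsic}, and then to exploit the rotational symmetry of $d_H$ to show that the pointwise spherical factor appearing there is in fact constant. First I would invoke \cite{mv:intrinsic} in the form
\[
\cS^{\ovm}(\Omega) = \int_U \beta_d\big(\tau_\Sigma(\Phi(x))\big)\, \big|(\partial_{x_1}\Phi \wedge \cdots \wedge \partial_{x_m}\Phi)_{(\ovm)}\big| \, dx,
\]
where the tangent $m$-vector $\tau_\Sigma$ is expressed in the graded left-invariant frame $X_1,X_2,X_3$ (with $X_1,X_2$ of degree one and $X_3$ of degree two), the operation $(\cdot)_{(\ovm)}$ projects onto the span of the degree-$\ovm$ simple $m$-vectors, and $\beta_d(\tau)$ is the spherical factor attached to the tangent direction $\tau$ at a point of degree $\ovm$, a purely metric quantity determined by the homogeneous tangent subgroup and by the unit ball of $d_H$. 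The entire content of the theorem then reduces to the claim that $\beta_d(\tau_\Sigma(x))$ does not depend on $x$, i.e.\ that it equals the asserted geometric constant $\beta_d = \beta_d(m,\ovm)$.

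Next I would identify the homogeneous tangent subgroup at a point $x$ of maximal degree $\ovm$, since this is the object controlling $\beta_d$. Here the blow-up results of \cite[(3)]{mag:blowup2step} and \cite[Proposition~4.5]{mag:blowup_heis} apply: after translating $x$ to the origin and rescaling by the intrinsic dilations, the submanifold converges to a homogeneous subgroup $A_x$ through the origin, and the rescaled measure $\cS^{\ovm}\restrict\Sigma$ converges to a multiple of the $\ovm$-dimensional spherical Hausdorff measure on $A_x$. The $C^{1,1}$ regularity hypothesis guarantees that this blow-up holds, with the correct density, at every point of degree $\ovm$. In the three relevant cases the subgroup $A_x$ is, respectively, a horizontal line (when $m=1$, $\ovm=1$), the vertical axis (when $m=1$, $\ovm=2$), and a vertical plane (when $m=2$, $\ovm=3$); in each case $\beta_d(\tau_\Sigma(x))$ is the metric ratio determined by the intersection of $A_x$ with the unit ball of $d_H$ together with the normalization of $\cS^{\ovm}$.

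The decisive step is to show that this ratio is independent of $x$. I would argue that, in a fixed case $(m,\ovm)$, all the homogeneous tangent subgroups that arise form a single orbit of the isometry group of $(\Heis,d_H)$ generated by left translations and the rotations $\tilde R$ of \eqref{eq:Rotations}. Indeed, $\tilde R$ acts on the horizontal layer $H_0\Heis$ as a planar rotation (its first $2\times 2$ block is orthogonal with the relation $-R_2=\bJ R_1$) and fixes the $x_3$-axis, so it acts transitively on horizontal directions, hence on horizontal lines through the origin and on vertical planes through the origin, while the vertical axis is unique. Since each $\tilde R$ is simultaneously a group automorphism and a $d_H$-isometry, it preserves both the homogeneous norm and the graded $m$-vector norm, and therefore carries the metric data defining $\beta_d$ at one tangent subgroup to the data at any other. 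Consequently $\beta_d(\tau_\Sigma(x))$ takes a single value, and factoring it out of the integral yields \eqref{eq:area-formula}.

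The main obstacle I anticipate is not the symmetry argument, which is elementary once the tangent subgroups are in hand, but the correct assembly of the cited blow-up results. Specifically, one must verify that the blow-up of the parameterization $\Phi$ produces exactly $A_x$ with multiplicity governed by $|(\partial_{x_1}\Phi\wedge\cdots\wedge\partial_{x_m}\Phi)_{(\ovm)}|$ rather than by the full Euclidean Jacobian, and that the lower-degree points---the isolated horizontal points of a fully nonhorizontal curve and the characteristic points of a surface---are \emph{negligible} both for $\cS^{\ovm}$ and for the right-hand integrand, so that the formula may be integrated over all of $U$. These facts are precisely what \cite{mv:intrinsic}, \cite{mag:blowup2step} and \cite{mag:blowup_heis} supply, which is why the result follows by joining them.
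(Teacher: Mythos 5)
Your proposal is correct and takes essentially the same route as the paper, whose entire proof is the remark that the theorem ``follows by joining'' \cite{mv:intrinsic}, \cite[(3)]{mag:blowup2step} and \cite[Proposition~4.5]{mag:blowup_heis}. You have simply made explicit the assembly the authors leave implicit in those citations---the pointwise spherical factor from \cite{mv:intrinsic}, the blow-up identification of the tangent subgroups, and the constancy of the factor via the isometric automorphisms $\tilde R$ of \eqref{eq:Rotations}---which is precisely the symmetry argument the paper itself deploys in the proof of Lemma~\ref{zeroth1}.
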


Such area formula holds in much greater generality than what we state here, as for higher dimensional Heisenberg groups and general stratified groups
equipped with a variety of different homogeneous distances. Recently, a general approach
to area formulas has been established in \cite{mag:towards_area}, also proving
these formulas for new classes of submanifolds of class $C^1$.

For simplicity, we henceforth assume that the constant $\beta_d$ in Theorem~\ref{th:magnani-area-formula} equals one. This can be achieved by rescaling the
spherical Hausdorff measure $\cS^{\ovm}$, if necessary. Clearly such rescaling has no effect on the arguments in this paper.

\begin{lemma}\label{zeroth1}
Let $\Sigma$ be a $C^{1,1}$ submanifold of topological dimension $m\in\{1,2\}$ and Hausdorff dimension $\ovm\in\{1,2,3\}$.
Then for all points $x\in\Sigma$ of degree $\ovm$, the following limit
\begin{equation}\label{eq:limS^mr^-m}
\lim_{r\to 0} \frac{\cS^{\ovm}(B_H(x,r) \cap \Sigma)}{r^{\ovm}}
\end{equation}
exists and equals a fixed geometric constant.
\end{lemma}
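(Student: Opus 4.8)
The plan is to reduce everything to a single blow-up computation at the base point, where the rescaled submanifold converges to its homogeneous tangent subgroup and the area formula \eqref{eq:area-formula} passes to the limit.

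First I would normalize the base point by left translation. Each left translation $\tau_{x^{-1}}\colon y\mapsto x^{-1}*y$ is an isometry of $(\Heis,d_H)$ carrying $x$ to $o$, and since it is also a group homomorphism it preserves both the degree and the measure $\cS^{\ovm}$; hence it suffices to treat $x=o$. Write $\delta_\lambda(y_1,y_2,y_3)=(\lambda y_1,\lambda y_2,\lambda^2 y_3)$ for the Heisenberg dilations. As these are group homomorphisms with $\|\delta_\lambda y\|_H=\lambda\|y\|_H$, they satisfy $d_H(\delta_\lambda y,\delta_\lambda y')=\lambda\,d_H(y,y')$, and consequently $\cS^{\ovm}(\delta_\lambda A)=\lambda^{\ovm}\cS^{\ovm}(A)$ and $\delta_{1/r}(B_H(o,r))=B_H(o,1)$. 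Setting $\Sigma_r:=\delta_{1/r}(\Sigma)$, these scaling rules yield
\begin{equation}\label{eq:scaling-id}
\frac{\cS^{\ovm}(B_H(o,r)\cap\Sigma)}{r^{\ovm}}=\cS^{\ovm}\big(B_H(o,1)\cap\Sigma_r\big),
\end{equation}
so the assertion is equivalent to the convergence of the right-hand side as $r\to0$.

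Next I would identify the limit of $\Sigma_r$. Since $o$ has degree $\ovm$, the homogeneous tangent subgroup $T$ of $\Sigma$ at $o$ is an affine horizontal line when $(m,\ovm)=(1,1)$, the vertical axis $\{(0,0,t)\}$ when $(m,\ovm)=(1,2)$, and a vertical plane through $o$ when $(m,\ovm)=(2,3)$. In each case the blow-up results of \cite{mag:blowup2step,mag:blowup_heis}, on which Theorem~\ref{th:magnani-area-formula} itself rests, give $\Sigma_r\to T$ in $C^1_{\mathrm{loc}}$. One sees this directly in the case $(1,1)$: after a rotation $\tilde R$ from \eqref{eq:Rotations} we may arclength-parameterize $\Sigma$ with $\dot\Sigma(0)=e_1$, and horizontality of $\Sigma$ forces $\dot\gamma_3(0)=\ddot\gamma_3(0)=0$ for $\gamma=\pi\circ\Sigma$, whence $\delta_{1/r}(\Sigma(rs))\to(s,0,0)$ with locally uniform convergence of derivatives; the $C^{1,1}$ hypothesis supplies the uniform second-order bounds making this, and the analogous convergences in the remaining two cases, locally uniform.

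Finally I would pass to the limit in \eqref{eq:scaling-id} via the area formula. Parameterizing $\Sigma_r$ and $T$ over a common domain $U\subset\R^m$ by maps $\Phi_r\to\Phi_\infty$ in $C^1_{\mathrm{loc}}$, the integrands $|(\partial_{x_1}\Phi_r\wedge\cdots\wedge\partial_{x_m}\Phi_r)_{(\ovm)}|$ converge locally uniformly to $|(\partial_{x_1}\Phi_\infty\wedge\cdots\wedge\partial_{x_m}\Phi_\infty)_{(\ovm)}|$, which is a positive constant precisely because the degree-$\ovm$ component of the tangent multivector is nondegenerate exactly at points of degree $\ovm$. Together with the convergence of the parameter regions $\{u:\Phi_r(u)\in B_H(o,1)\}$ and dominated convergence this gives
\begin{equation}\label{eq:limit-const}
\lim_{r\to0}\cS^{\ovm}\big(B_H(o,1)\cap\Sigma_r\big)=\cS^{\ovm}\big(B_H(o,1)\cap T\big).
\end{equation}
The right-hand side is a geometric constant, and by the rotational symmetry of $d_H$ under the isometries $\tilde R$ of \eqref{eq:Rotations}, which act transitively on horizontal lines through $o$ and on vertical planes through $o$, it is independent of the particular $T$ and so depends only on $(m,\ovm)$.

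The main obstacle will be this last step: justifying that the limit may be taken inside the area integral. Two points require care. First, one must control the $r$-dependence of the domain of integration coming from the constraint $B_H(o,1)$; this is handled by observing that $\partial B_H(o,1)\cap T$ is $\cS^{\ovm}$-null, being a lower-dimensional slice of the homogeneous subgroup $T$ by the Kor\'anyi sphere, so no mass concentrates on the boundary in the limit. Second, one needs the convergence $\Sigma_r\to T$ together with a uniform integrable bound on the area integrands, which is exactly what the $C^{1,1}$ regularity of $\Sigma$ provides. Granting these, \eqref{eq:scaling-id} and \eqref{eq:limit-const} complete the proof.
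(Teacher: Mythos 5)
Your proof is correct, but it is assembled quite differently from the paper's. The paper never dilates: starting from the area formula \eqref{eq:area-formula}, it factors the ratio as
\begin{equation*}
\frac{\cS^{\ovm}(B_H(x,r)\cap\Sigma)}{r^{\ovm}}
=\pa{\dashint_{\Sigma\cap B_H(x,r)}|\tau^{\ovm}_\Sigma|\,d\tilde\mu_m}
\pa{\frac{\tilde\mu_m(\Sigma\cap B_H(x,r))}{r^{\ovm}}},
\end{equation*}
where $\tilde\mu_m$ is an auxiliary Riemannian surface measure on $\Sigma$ and $\tau^{\ovm}_\Sigma$ is the degree-$\ovm$ part of the unit tangent $m$-vector; the first factor tends to $|\tau^{\ovm}_\Sigma(x)|$ by continuity, while the convergence of the second factor is imported as a black box from the density result \cite[(1.3)]{mv:intrinsic}, so that the product is the $m$-dimensional Euclidean surface measure of $B_H(0,1)\cap\Pi$ for the tangent subspace $\Pi$. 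What you do instead is prove that density statement by hand: left-translate $x$ to $o$, use the homogeneity of $\cS^{\ovm}$ under the dilations $\delta_{1/r}$ to rewrite the ratio as $\cS^{\ovm}(B_H(o,1)\cap\delta_{1/r}\Sigma)$, and pass to the limit in the area integral using $C^1_{\mathrm{loc}}$ blow-up convergence, the $\cS^{\ovm}$-negligibility of $\partial B_H(o,1)\cap T$, and dominated convergence. Your route makes the mechanism and the role of the homogeneous tangent subgroup transparent --- and your limiting constant $\cS^{\ovm}(B_H(o,1)\cap T)$ agrees with the paper's Euclidean constant precisely because the area-formula integrand is identically $1$ along a homogeneous subgroup of degree $\ovm$ --- whereas the paper's route is shorter because all of the delicate work (convergence of the parameter domains, uniform bounds) is absorbed into the citation. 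Both proofs then conclude identically, invoking the rotations \eqref{eq:Rotations} and the symmetry of the Kor\'anyi ball to see that the constant is independent of the particular tangent line or plane, hence depends only on $(m,\ovm)$.

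Two small repairs are needed in your $(1,1)$ blow-up sketch. First, $\gamma=\pi\circ\Sigma$ is planar, so ``$\gamma_3$'' should refer to the third component $x_3$ of $\Sigma$ itself. Second, for a curve that is only $C^{1,1}$ you should not appeal to a pointwise second derivative $\ddot x_3(0)$ as a primitive datum; instead, use horizontality along the whole curve: the identity $\dot x_3=2x_2\dot x_1-2x_1\dot x_2$, combined with $x_2(t)=O(t^2)$ (which follows from $\dot x_2(0)=0$ and the Lipschitz bound on $\dot x_2$), gives $\dot x_3(t)=O(t^2)$ and hence $x_3(t)=O(t^3)=o(t^2)$, which is exactly the decay required for $\delta_{1/r}\Sigma(rs)\to(s,0,0)$ in $C^1_{\mathrm{loc}}$. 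The analogous convergences in the $(1,2)$ and $(2,3)$ cases need no such refinement, since there $C^1$ regularity alone suffices.
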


\begin{proof}
From \eqref{eq:area-formula}, using the rescaled version of $\cS^{\ovm}$ such that $\beta_d=1$, we have
\[
\begin{split}
\frac{\cS^{\ovm}(B_H(x,r) \cap \Sigma)}{r^{\ovm}}&=r^{-\ovm}\int_{\Phi^{-1}(B_H(x,r) \cap \Sigma)}
|(\partial_{x_1}\Phi \wedge \partial_{x_2}\Phi \wedge \cdots \wedge \partial_{x_m}\Phi)_{(\ovm)}| \, dx \\
&=\pa{\dashint_{\Sigma\cap B_H(x,r)} |\big(\tau^{\ovm}_\Sigma(x)\big)| d\tilde\mu_m}\pa{\frac{\tilde\mu_m(\Sigma\cap B_H(x,r))}{r^{\ovm}}  }
\end{split}
\]
where $\tilde\mu_m$ is the $m$-dimensional surface measure on $\Sigma$
with respect to a fixed auxiliary Riemammian metric $\tilde g$,
$\tau_\Sigma$ is the unit $m$-tangent vector with respect to
the metric $\tilde g$ and $\tau^{\ovm}_\Sigma=(\tau_\Sigma)_{(\ovm)}$ is
the homogeneous part of $\tau_\Sigma$ having degree $\ovm$.
As a result, the continuity of $x\to |\tau^{\ovm}_\Sigma(x)|$ and \cite[(1.3)]{mv:intrinsic} show that the
limit \eqref{eq:limS^mr^-m} exists and equals to the $m$-dimensional Euclidean surface measure of
$B_H(0,1)\cap \Pi$ for a suitable subspace $\Pi$ of $\Heis$.

When $m=1$ and $\ovm=1$, $\Pi$ is a one dimensional homogeneous
subgroup of $\Heis$ contained in the $x_1x_2$-plane, so by the rotational symmetry of $B_H(0,1)$ about the $x_3$-axis the
Euclidean surface measure of $B_H(0,1)\cap \Pi$ does not depend on $\Pi$. In this case the value of the limit \eqref{eq:limS^mr^-m} is equal to $2$.

When $m=1$ and $\ovm=2$, $\Pi$ is uniquely determined by the one dimensional vertical subgroup of $\Heis$. Again, the value of the limit \eqref{eq:limS^mr^-m} is equal to $2$.

Finally, when $m=2$ and $\ovm=3$, $\Pi$ is some 2-dimensional homogeneous subgroups of $\Heis$, corresponding to a vertical plane of $\Heis$.
Also in this case, due to the special symmetry of $B_H(0,1)$, the Euclidean surface measure of $B_H(0,1)\cap \Pi$ does not depend
on the choice of this vertical plane $\Pi$. The value of the limit \eqref{eq:limS^mr^-m} in this case is explicitly computed in the Appendix to this paper, see \eqref{eq:omegaH}.

This concludes the proof.
\end{proof}

Let $\Sigma$ be an analytic variety. We denote by
\begin{equation}\label{eq:stratification}
\Sigma = \Sigma_{(0)} \cup \cdots \cup \Sigma_{(m)}
\end{equation}
the stratification of $\Sigma$ into a countable union of analytic submanifolds of dimensions between $0$ and $m$. The existence of such a stratification is a celebrated theorem of Lojasiewicz. We refer the reader to \cite[Section 6.3]{krprks:realanalytic} for further discussion. Apart from trivial cases, the stratification in \eqref{eq:stratification} does not contain submanifolds of full dimension, i.e., $m<n$.

\begin{theorem}\label{zeroth2}
If $\mu$ is an $\ovm$-uniform measure, then
\begin{equation}\label{eq:Sm}
\mu = c \, \cS^{\ovm}\restrict \spt\mu.
\end{equation}
\end{theorem}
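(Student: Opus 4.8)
The plan is to show that $\mu$ and its candidate representative $\nu := \cS^{\ovm}\restrict\spt\mu$ are mutually absolutely continuous with a constant Radon--Nikodym derivative, by comparing their $\ovm$-densities pointwise. Write $\Sigma := \spt\mu$ and recall from \cite{ct:marstrand} that $\Sigma$ is a real analytic variety carrying the stratification \eqref{eq:stratification}, whose top stratum $\Sigma_{(m)}$ has topological dimension $m$ and Hausdorff dimension $\ovm$. Let $G\subset\Sigma$ denote the set of points of degree $\ovm$; by \eqref{d:degree} this is a relatively open subset of $\Sigma_{(m)}$, and it is exactly the set on which Lemma \ref{zeroth1} supplies the density of $\cS^{\ovm}$. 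The strategy is to first discard $\Sigma\setminus G$ as null for both measures, then to evaluate both densities on $G$ and differentiate one measure against the other.

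First I would show $\mu(\Sigma\setminus G)=0=\nu(\Sigma\setminus G)$. The lower strata $\Sigma_{(0)},\dots,\Sigma_{(m-1)}$ have Hausdorff dimension strictly below $\ovm$, hence are $\cS^{\ovm}$-null, while the remaining degenerate points (those of degree $<\ovm$ lying inside $\Sigma_{(m)}$) are precisely where the degree-$\ovm$ component of the tangent $m$-vector vanishes; applying the measure identity \eqref{eq:area-formula} to this set, the integrand is identically zero there, so it too is $\cS^{\ovm}$-null. Thus $\nu(\Sigma\setminus G)=0$ and, by \eqref{H-S-comparable}, $\cH^{\ovm}(\Sigma\setminus G)=0$. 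Since the uniformity condition $\mu(B_H(x,r))=cr^{\ovm}$ makes the upper $\ovm$-density of $\mu$ a finite constant at every point of $\Sigma$, the standard comparison of a measure of bounded upper density against Hausdorff measure (valid in any metric space directly from the definition of $\cH^{\ovm}$, by enclosing each set of a Hausdorff cover in a concentric ball centered at a support point) gives $\mu(\Sigma\setminus G)=0$. Hence both $\mu$ and $\nu$ are concentrated on $G$.

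On $G$ the two densities are explicit positive constants. For $x\in G\subset\spt\mu$ uniformity gives $\mu(B_H(x,r))=cr^{\ovm}$, while Lemma \ref{zeroth1} gives $\nu(B_H(x,r))=\cS^{\ovm}(B_H(x,r)\cap\Sigma)=\omega\,r^{\ovm}(1+o(1))$ as $r\to0$, where $\omega>0$ is the fixed geometric constant of that lemma. Therefore
\[
\lim_{r\to 0}\frac{\nu(B_H(x,r))}{\mu(B_H(x,r))}=\frac{\omega}{c}\qquad\text{for every } x\in G.
\]
To turn this into the measure identity I would differentiate $\nu$ against $\mu$. The measure $\mu$ is asymptotically doubling, since $\mu(B_H(x,2r))=2^{\ovm}\mu(B_H(x,r))$ at every point of its support. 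By the Lebesgue differentiation theorem for asymptotically doubling measures, the Lebesgue decomposition $\nu=\nu_a+\nu_s$ of $\nu$ with respect to $\mu$ satisfies $\frac{d\nu_a}{d\mu}(x)=\lim_{r\to0}\nu(B_H(x,r))/\mu(B_H(x,r))$ for $\mu$-a.e. $x$, whereas that same limit equals $+\infty$ for $\nu_s$-a.e. $x$. As $\nu$, and hence $\nu_s$, is concentrated on $G$, where the limit is the finite constant $\omega/c$, the singular part must vanish. Consequently $\nu=\tfrac{\omega}{c}\,\mu$, that is $\mu=\tfrac{c}{\omega}\,\cS^{\ovm}\restrict\spt\mu$, which is \eqref{eq:Sm}.

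The main obstacle is the differentiation step, because the Besicovitch covering property is known to be delicate for homogeneous distances on $\Heis$ and may fail for $d_H$; one therefore cannot simply appeal to a Besicovitch-based differentiation theorem for arbitrary Radon measures. The point that rescues the argument is that no covering theorem for general measures is needed: it suffices that the base measure be asymptotically doubling, which holds here automatically because $\mu(B_H(x,2r))=2^{\ovm}\mu(B_H(x,r))$ on $\spt\mu$ (and, symmetrically, $\nu$ is locally Ahlfors $\ovm$-regular on $G$ by Lemma \ref{zeroth1}). Asymptotic doubling yields the Vitali covering theorem that the differentiation theorem requires, thereby circumventing the possible failure of the Besicovitch property for $(\Heis,d_H)$.
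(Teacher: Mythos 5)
Your proposal mirrors the paper's own argument quite closely: discard the lower strata and the degree-degenerate points of $\Sigma_{(m)}$ as $\cS^{\ovm}$-null (the paper does this via absolute continuity of $\cS^3$ with respect to Euclidean $\cH^2$ and a citation to \cite[Corollary~1.2]{mag:blowup2step}, where you use a dimension count and the vanishing of the area-formula integrand --- both acceptable), use Lemma \ref{zeroth1} together with uniformity to identify the two densities on the good set, and then run a Federer-style differentiation based on Vitali relations coming from doubling, precisely to sidestep the possible failure of the Besicovitch property for $d_H$. The one structural difference is the direction of differentiation: the paper takes $\phi=\cS^{\ovm}\restrict\Sigma'_{(m)}$ as the base measure (asymptotically doubling by Lemma \ref{zeroth1}, so \cite[2.8.17]{fed:gmt} applies) and differentiates $\mu$ against it, then kills the singular part and the residual bad sets in one stroke by observing that uniformity forces $\mu$ and $\cS^{\ovm}\restrict\spt\mu$ to be mutually absolutely continuous; you instead take $\mu$ as base and differentiate $\nu=\cS^{\ovm}\restrict\spt\mu$ against it.

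There is, however, one step whose justification is looser than it should be, and it is exactly the covering-theorem subtlety you flag at the end. The assertion that the ratio limit equals $+\infty$ at $\nu_s$-a.e.\ point is \emph{not} a consequence of the differentiation theorem for an asymptotically doubling base measure: asymptotic doubling of $\mu$ is a $\mu$-a.e.\ condition, and the resulting $\mu$-Vitali relation can only produce disjoint subfamilies covering a set up to a $\mu$-null error --- which is useless for controlling $\nu_s$, since $\nu_s$ is concentrated precisely on a $\mu$-null set. That machinery identifies the absolutely continuous part $\nu_a$ but, in a general metric space, says nothing about where the ratio blows up relative to $\nu_s$. In your situation the conclusion is nonetheless true, and for reasons you essentially have in hand: (a) a uniform measure is \emph{exactly} doubling at \emph{every} point of its support ($\mu(B_H(x,2r))=2^{\ovm}\mu(B_H(x,r))$ for all $x\in\spt\mu$ and all $r$, not merely $\mu$-a.e.\ asymptotically), so a $5r$-covering argument applied to a fine cover by balls satisfying $\nu(B)\le t\,\mu(B)$ yields $\nu(A)\le 2^{3\ovm}t\,\mu(A)$ for any $A\subset G$ on which the ratio is bounded by $t$, whence $\nu\restrict G\ll\mu$ and $\nu_s=0$; alternatively, (b) your parenthetical observation that $\nu$ is locally Ahlfors regular on $G$ (Lemma \ref{zeroth1}) makes closed balls a $\nu$-Vitali relation as well, which is exactly what one needs to cover $\nu_s$-almost all of a set by disjoint balls. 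Either repair closes the argument; note that this is precisely the role played in the published proof by the remark that the uniformity condition makes $\mu$ and $\cS^{\ovm}\restrict\spt\mu$ mutually absolutely continuous via the standard two-sided density comparison theorems.
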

\begin{proof}
By \cite[Proposition~3.1]{ct:marstrand}, the support
$\Sigma $ is an analytic variety with stratification \eqref{eq:stratification}.
Let us define $S_0=\Sigma\setminus\Sigma_{(m)}$.
If $m=2$ and $\ovm=3$, then $S_0$ is made by countable
unions of points and analytic curves, hence it is $\cH^2_E$-negligible.
By the absolute continuity of $\cS^3$ with respect to
the 2-dimensional Euclidean Hausdorff measure $\cH^2_E$, see for instance \cite{BTW2009}, we get $\cS^3(S_0)=0$.
If $m=1$ and either $\ovm=2$ or $\ovm=1$, then $S_0$ is contained in an at most countable union of points, therefore $\cS^2(S_0)=0$.
We conclude that in any case $\cS^{\ovm}(S_0)=0$.

The set $\Sigma_{(m)}$ contains a possibly nonempty subset
$\Sigma_{0,m}$ made of points with degree less than $\ovm$,
which is $\cS^{\ovm}$-negligible, due to
\cite[Corollary~1.2]{mag:blowup2step}.
Lemma~\ref{zeroth1} provides us in particular an
asymptotically doubling property of the measure
\[
\phi=\cS^{\ovm}\restrict\Sigma'_{(m)}
\]
with $\Sigma'_{(m)}=\Sigma_{(m)}\setminus\Sigma_{0,m}$
We are then in the position to apply both  \cite[2.8.17]{fed:gmt} and \cite[2.9.7]{fed:gmt} to $\phi$, hence
differentiating the restriction
\[
\mu\restrict\Sigma'_{(m)}
\]
with respect to $\phi$ in $\Sigma_{(m)}'$, we get
the equality \eqref{eq:Sm} on $\Sigma'_{(m)}$.
The uniform condition immediately shows that $\mu$
and $\cS^{\ovm}\restrict\Sigma$ are mutually absolutely continuous,
therefore both $S_0$ and $\Sigma_{0,m}$ are also $\mu$-negligible.
This concludes the proof.
\end{proof}

\section{1-uniform measures and smooth horizontal curves}\label{sec:1-uniform}

In this section we analyze $1$-uniform measures, and in particular, we prove Theorem \ref{thmA}. The proof of Theorem \ref{thmA} relies on the following power series formula for the volumes of small Kor\'anyi balls along smooth horizontal curves.

\begin{proposition}\label{propA}
Let $\Sigma$ be a $C^3$ smooth horizontal curve in $\Heis$, and let $x = \Sigma(0)$. Set $\gamma = \pi \circ \Sigma$ and denote by $k = k_\gamma$ the planar curvature of $\gamma$. Then
\begin{equation}\label{eqnA}
\cS^1(B_H(x,r)\cap\Sigma) = 2r + \ba_1(x) r^3 + o(r^3),
\end{equation}
where $\ba_1(x)$ is a positive multiple of $k(0)^2$.
\end{proposition}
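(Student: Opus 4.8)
The plan is to reduce to a normalized configuration and then read off the coefficient from a Taylor expansion of the Kor\'anyi norm along the curve. First I would exploit the left-invariance of $d_H$ together with the rotations $\tilde R$ of \eqref{eq:Rotations}, all of which are isometries preserving $\cS^1$ and the planar curvature of the projection $\gamma$, to assume $x = \Sigma(0) = o$ and that the horizontal tangent $\dot\Sigma(0)$ equals $e_1 = X_1(o)$. Parameterizing $\Sigma$ by arc length, the area formula \eqref{eq:area-formula} (with $\beta_d = 1$) in the case $(m,\ovm) = (1,1)$ identifies $\cS^1(B_H(x,r)\cap\Sigma)$ with the Carnot--Carath\'eodory length $\length_{cc}(\Sigma\cap B_H(o,r))$, i.e.\ with the Lebesgue measure of $\{s : \|\Sigma(s)\|_H < r\}$; the leading value $2$ here is consistent with Lemma~\ref{zeroth1}. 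Since $g(s) := \|\Sigma(s)\|_H$ satisfies $g(0) = 0$ and $g'(s) \to 1$ as $s\to 0$, this set is, for all sufficiently small $r$, a single interval $(s_-(r), s_+(r))$, and the desired quantity is $s_+(r) - s_-(r)$.

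Next I would Taylor-expand $g$. Writing $\dot\gamma = (\cos\theta, \sin\theta)$ with $\dot\theta = k$ and $\theta(0)=0$, the horizontality relation $\vartheta(\dot\Sigma) = 0$ gives $\dot\gamma_3 = 2(\gamma_2\dot\gamma_1 - \gamma_1\dot\gamma_2)$, whence $\gamma_3(s) = -\tfrac{k(0)}{3}s^3 + o(s^3)$. Combined with the standard planar expansion $|\gamma(s)|^2 = s^2 - \tfrac{k(0)^2}{12}s^4 + o(s^4)$, this produces
\[
\|\Sigma(s)\|_H^4 = |\gamma(s)|^4 + \gamma_3(s)^2 = s^4 - \tfrac{k(0)^2}{18}\,s^6 + o(s^6),
\]
and therefore $g(s) = |s| - \tfrac{k(0)^2}{72}|s|^3 + o(|s|^3)$. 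Only the value $k(0)$ enters at this order: contributions of $\dot k(0)$ and of the odd-order terms first appear in the $s^7$ coefficient of $\|\Sigma(s)\|_H^4$, which affects $g$ only at order $|s|^4$ and is hence irrelevant to the claimed expansion.

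Finally, inverting $g(s) = r$ on each side of the origin yields $s_\pm(r) = \pm\big(r + \tfrac{k(0)^2}{72}r^3\big) + o(r^3)$, so that
\[
\cS^1(B_H(x,r)\cap\Sigma) = s_+(r) - s_-(r) = 2r + \tfrac{1}{36}k(0)^2\,r^3 + o(r^3),
\]
which is \eqref{eqnA} with $\ba_1(x) = \tfrac{1}{36}k(0)^2$, manifestly a positive multiple of $k(0)^2$. (Geometrically the sign is reassuring: a bending curve fits more arc length inside a fixed ball than a horizontal line does.) I expect the main obstacle to be not the algebra but two structural points: justifying that $B_H(x,r)\cap\Sigma$ is captured by the single arc around $\Sigma(0)$ for small $r$, which uses properness of the arc together with the monotonicity of $g$ away from the origin, and confirming that $\cS^1$ restricted to a horizontal curve is exactly its Carnot--Carath\'eodory length under the normalization $\beta_d = 1$, so that the leading coefficient agrees with Lemma~\ref{zeroth1}.
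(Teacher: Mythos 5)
Your proposal is correct and follows essentially the same route as the paper's proof: normalize via left translation and a rotation about the $x_3$-axis, use the area formula to identify $\cS^1(B_H(x,r)\cap\Sigma)$ with the arc-length measure of $\Sigma^{-1}(B_H(x,r))$, expand $\|\Sigma(s)\|_H^4 = s^4 - \tfrac{1}{18}k(0)^2 s^6 + o(s^6)$, invert to get $s_\pm(r)$, and subtract, obtaining the same coefficient $\ba_1 = \tfrac{1}{36}k(0)^2$. The only cosmetic differences are that the paper performs the normalization at the end rather than the start, and encodes unit speed through undetermined Taylor coefficients (deriving $a_2=0$, $a_3=-b_2^2$) instead of your angle parameterization $\dot\gamma=(\cos\theta,\sin\theta)$.
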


We first prove Theorem \ref{thmA} under the assumption that Proposition \ref{propA} is satisfied.

\begin{proof}[Proof of Theorem \ref{thmA}]
Let $\mu$ be a $1$-uniform measure on $\Heis$. By \cite[Proposition~3.1]{ct:marstrand}, $M = \spt\mu$ is a real analytic variety.
In view of Lojasiewicz's Structure Theorem for real analytic varieties, Theorem~\ref{zeroth2} with $\ovm=1$ and \eqref{eq:area-formula}, we deduce that $M = M_{(0)} \cup M_{(1)}$ is the union of countably many analytic submanifolds whose dimensions are at most one. Moreover, all curves contained in $M_{(1)}$ must be fully horizontal.

Due to the $1$-uniformity of $\mu$, $M_{(0)}$ cannot contain isolated points. Moreover, we cannot have points $x$ of $M_{(0)}$ where more than two curves in $M_{(1)}$ meet, since the measures of small balls centered at $x$ would be too large. The $1$-uniformity of $\mu$ also prevents curves of $M_{(1)}$ from terminating at some point $x$, since the measures of small balls centered at $x$ would be too small.

Analyticity ensures that whenever $x\in M_{(1)}$, there exists $r_x>0$ such that $M \cap B_H(x,r_x)$ is a single connected analytic curve $\Sigma$. Indeed, if we had an infinite sequence of points of $M$ not lying in this curve and converging to $x$, then---since $M$ does not contain isolated points---we would conclude that $x$ lies in the intersection of at least two horizontal curves. This gives a contradiction. We are thus in a position to apply Proposition~\ref{propA} at $x$, concluding that the curvature of $\gamma = \pi \circ \Sigma$ at $\pi(x)$ is equal to zero.

It follows that any horizontal curve in $M_{(1)}$ is an affine horizontal line. We have proved that $M$ is the union of disjoint affine horizontal
line segments (bounded or unbounded), two of these line segments may meet at a corner, but there are no endpoints of line segments or junction points of three or more lines.

\begin{lemma}\label{lem:1-uniform-no-corners}
$M$ cannot contain corners, i.e., $M$ cannot contain two horizontal line segments meeting transversally at a common endpoint.
\end{lemma}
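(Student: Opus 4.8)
The plan is to argue by contradiction, combining the rigidity $\cS^1(B_H(y,r)\cap M)=2r$ with an explicit computation of the Kor\'anyi distance near a corner. The uniformity of $\mu$, together with Theorem~\ref{zeroth2} and the fact (Lemma~\ref{zeroth1}) that the degree-$1$ density equals $2$, forces $\cS^1(B_H(y,r)\cap M)=2r$ at every $y\in M$ and every small $r$. First I would normalize: after a left translation (an isometry of $(\Heis,d_H)$) I may place the corner at the origin $o$, and after a rotation $\tilde R$ as in \eqref{eq:Rotations} I may assume the two horizontal segments are rays $L_1,L_2$ issuing from $o$ inside the plane $\{x_3=0\}$, with $L_1$ along the $x_1$-axis and $L_2$ in direction $(\cos\theta,\sin\theta,0)$ for some $\theta\in(0,\pi)$; transversality rules out $\theta=\pi$. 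Locally $M=L_1\cup L_2$, and since by the area formula \eqref{eq:area-formula} (with $\beta_d=1$) the measure $\cS^1$ on a horizontal curve equals the Euclidean length of its planar projection, the identity $B_H(o,r)\cap\{x_3=0\}=\{x_1^2+x_2^2<r^2\}$ already gives $\cS^1(B_H(o,r)\cap M)=2r$ and pins the constant.

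Equivalently---and this is the conceptually cleanest framing---the tangent measure of $\mu$ at the corner is $\cS^1\restrict(L_1\cup L_2)$, which must itself be $1$-uniform; the task becomes to show that two distinct horizontal rays from the origin cannot support a $1$-uniform measure.

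To detect the failure I would test the identity at $y=(a,0,0)\in L_1$ for small $a>0$, splitting $\cS^1(B_H(y,r)\cap M)$ into contributions from the two rays. The $L_1$-contribution is purely Euclidean, since $d_H(y,(t,0,0))=|t-a|$, giving $2r$ for $r\le a$ and $a+r$ for $r\ge a$. The $L_2$-contribution $N(r)$ is the length of $\{t\ge0:f(t)<r^4\}$, where $\|y^{-1}*(t\cos\theta,t\sin\theta,0)\|_H^4=f(t):=(t^2-2at\cos\theta+a^2)^2+4a^2t^2\sin^2\theta$; the term $4a^2t^2\sin^2\theta$ is precisely the Heisenberg twist that makes the problem non-Euclidean. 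Uniformity demands $N(r)=0$ for $r<a$ and $N(r)=r-a$ for $r>a$. I would then split into two regimes. If $\theta\in(0,\pi/2)$ then $f'(0)=-4a^3\cos\theta<0$, so the nearest point of $L_2$ to $y$ lies strictly inside distance $a$; choosing a suitable $r<a$ the ball already meets $L_2$ while $L_1$ alone contributes $2r$, so $\cS^1(B_H(y,r)\cap M)>2r$. If $\theta\in[\pi/2,\pi)$ then $\min f=a^4$ is attained at $t=0$, but the rate at which $L_2$ enters the ball as $r$ crosses $a$ is $N'(a^+)=4a^3/f'(0)=1/|\cos\theta|$, which is strictly larger than $1$ (and $+\infty$ at $\theta=\pi/2$), whereas $r-a$ has slope $1$; hence $N(r)>r-a$ and again $\cS^1(B_H(y,r)\cap M)>2r$. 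Either way uniformity is violated, so $M$ has no corners.

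The main obstacle is that the contradiction is genuinely second order: a naive comparison at large $r$ is inconclusive, because the first correction to $N(r)$ cancels exactly at $\theta=2\pi/3$ (where $1+2\cos\theta=0$). One is therefore forced to probe radii $r$ near the critical value $a=d_H(y,o)$ and to compute the precise rate at which the second ray enters the Kor\'anyi ball; this rate, encoded by $f'(0)$ and hence by the twist term $4a^2t^2\sin^2\theta$, is what separates a genuine corner $\theta\ne\pi$ from a straight line in every case. Some care is also needed to treat the acute and the obtuse/right-angle cases by the two slightly different mechanisms above, and to keep $r$ small enough that $B_H(y,r)\cap M$ reduces to the two rays.
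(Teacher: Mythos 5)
Your proof is correct, and its skeleton is the same as the paper's: you test the rigidity $\cS^1(B_H(y,r)\cap M)=2r$ at a point $y$ on one segment near the corner, and you split into exactly the paper's two cases according to whether $L_2$ comes strictly closer to $y$ than the corner does (your regime $\cos\theta>0$) or not. Your first regime is the paper's first case verbatim. In the second case the mechanisms differ slightly: you compute the entry rate $N'(a^+)=1/|\cos\theta|>1$ via the inverse function theorem (with a separate remark needed at $\theta=\pi/2$, where the rate is infinite), while the paper proves the exact inequality $d_H(y,z)<d_H(y,o)+d_H(o,z)$ for the exit point $z$ of $L_2$, by expanding the Kor\'anyi norm explicitly; the paper's version is a strict triangle-inequality statement valid for every $r>a$, needs no differentiation or limiting argument, and covers the right-angle case without special treatment. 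These are the infinitesimal and global forms of the same geometric fact, so I would call your route essentially the paper's.

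One genuine error, though it sits in your closing commentary rather than in the proof itself: the claim that a naive comparison at large $r$ is inconclusive because the first correction to $N(r)$ cancels at $\theta=2\pi/3$ is false. For $a\ll r$ one has $N(r)=r+a\cos\theta-\tfrac{3a^2\sin^2\theta}{2r}+O(a^3/r^2)$, so the total measure is $2r+a(1+\cos\theta)+O(a^2/r)$, and the first correction $a(1+\cos\theta)$ vanishes only at $\theta=\pi$, which transversality excludes; no factor $1+2\cos\theta$ arises. Hence the naive regime $a\ll r\ll\rad(B)$ (available here, since $a$ may be chosen arbitrarily small while $B$ is fixed) already yields the contradiction for every corner, and the problem is not genuinely second order. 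This does not damage the argument you gave, but the stated motivation for probing radii near $a$ is mistaken.
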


Assuming the lemma, we now know that $M$ consists of one or more bi-infinite affine horizontal lines. However, $M$ cannot contain two such lines: consider a ball centered on one such line with radius slightly larger than the distance to the closest other line in $M$. We conclude that $M$ consists of a single, bi-infinite, affine horizontal line and $\mu$ is proportional to $\cS^1 \restrict M$. This completes the proof of Theorem \ref{thmA}, modulo Lemma \ref{lem:1-uniform-no-corners} and Proposition \ref{propA}.
\end{proof}

\begin{proof}[Proof of Lemma \ref{lem:1-uniform-no-corners}]
Suppose that $M = \spt\mu$ contains two horizontal line segments meeting at a common endpoint $x$. Let us denote by $\gamma_1,\gamma_2$ these two horizontal line segments. We assume that both $\gamma_1$ and $\gamma_2$ are parameterized on half-open intervals, and do not contain $x$. By analyticity, there exists some small ball $B$ centered at $x$ so that $M$ contains no points in $B$ apart from $\gamma_1 \cup \{x\} \cup \gamma_2$. Choose a point $y \in \gamma_1$ with $\dist_H(x,y) < \tfrac1{100} \rad(B)$. By assumption $\mu(B_H(y,r) \cap M) = 2r$ for all $r>0$. We will derive a contradiction. We distinguish two cases according to the relative location of the second line segment $\gamma_2$.

In the first case, we assume that there exists a radius $r_0>0$ so that the closed $d_H$-ball centered at $y$ with radius $r_0$ meets $\gamma_2$, but does not contain $x$. For some sufficiently small $\delta>0$, if we set $r=r_0+\delta$, then
$$
B_H(y,r) \cap M = E_1 \cup E_2,
$$
where $E_1$ is a line segment along $\gamma_1$ of length $2r$ and $E_2$ is a line segment along $\gamma_2$ of positive length. Then $\mu(B_H(y,r)\cap M) = \mu(E_1) + \mu(E_2) > 2r$ which is a contradiction.

In the second case, we assume that $B_H(y,r_0) \cap \gamma_2 = \emptyset$, where $r_0 = d_H(x,y)$. Again, for some sufficiently small $\delta>0$ we let $r=r_0+\delta$. Then $\partial B_H(y,r) \cap \gamma_2$ contains a single point, which we denote by $z$. Moreover, $B_H(y,r) \cap M = E_1 \cup E_2$, where $E_1$ is a line segment on $\gamma_1$ of length $d_H(x,y)+r$ and $E_2$ is a line segment on $\gamma_2$ of length $d_H(x,z)$. It suffices to prove that
$$
d_H(x,y) + r + d_H(x,z) \ne 2r.
$$
Let $d_1 = d_H(x,y)$ and $d_2 = d_H(x,z)$; we must show that $d_1 + d_2 \ne r = d_H(y,z)$. Without loss of generality, we may assume that $x = o = (0,0)$, $y = (-d_1,0,0)$, and $z=(d_2\cos\theta,d_2\sin\theta,0)$ for some $\theta \in (0,2\pi)$. We compute
\begin{equation*}\begin{split}
r^4 &= d_H(y,z)^4 \\
&= ((d_1 + d_2 \cos\theta)^2 + (d_2\sin\theta)^2)^2 + 4d_1d_2\sin^2\theta \\
&= d_1^4 + 4d_1^3d_2 \cos\theta + 6d_1^2d_2^2 + 4d_1d_2^3 \cos\theta + d_2^4.
\end{split}\end{equation*}
Since $\theta \ne 0$ mod $2\pi$, the latter expression is strictly less than $(d_1+d_2)^4$. Hence $r<d_1+d_2$ and the proof of the lemma is complete.
\end{proof}

\begin{proof}[Proof of Proposition \ref{propA}]
To simplify the computations we assume that $\Sigma$ is parameterized by the C--C arc length. Thus, if $\Sigma(s) = (x_1(s),x_2(s),x_3(s))$ the following ODEs are satisfied:
\begin{equation}\label{ODE1}
\dot{x}_3 + 2x_1\dot{x}_2-2x_2\dot{x}_1 = 0
\end{equation}
and
\begin{equation}\label{ODE2}
(\dot{x}_1)^2+(\dot{x}_2)^2 = 1.
\end{equation}
First, we assume that $s=0$, $\Sigma(0) = o = (0,0,0)$ and $\dot\Sigma(0) = (1,0,0)$. Theorem \ref{th:magnani-area-formula} implies that
\begin{equation}\label{proofAeqn1}
\cS^1(B_H(o,r) \cap \Sigma) = \int_{\Sigma^{-1}(B_H(o,r))} |(\dot\Sigma(s))_{(1)}| \, ds\,.
\end{equation}
Since $\Sigma$ is parameterized by the C--C arc length, $|(\dot\Sigma(s))_{(1)}| = ((\dot{x}_1)^2+(\dot{x}_2)^2)^{1/2} = 1$ and so \eqref{proofAeqn1} equals
$$
\cH^1(\Sigma^{-1}(B_H(o,r))).
$$
Let us expand the components of $\gamma = \pi \circ \Sigma$ in power series about $s=0$. Since the curve is $C^3$ we have
\begin{equation*}\begin{split}
x_1(s) &= s + \tfrac12 a_2s^2 + \tfrac16 a_3s^3 + o(s^3) \\
x_2(s) &= \tfrac12 b_2s^2 + \tfrac16 b_3s^3 + o(s^3) \,.
\end{split}\end{equation*}
The unit speed normalization \eqref{ODE2} gives $1+2a_2s+(a_2^2+a_3+b_2^2)s^2+o(s^2)=1$ for all $s$, whence $a_2=0$ and $a_3=-b_2^2$. We rewrite the equation for $x_1(s)$ as follows:
$$
x_1(s) = s - \tfrac16 b_2^2 s^3 + o(s^3) \,.
$$
Since $\Sigma$ is horizontal, \eqref{ODE1} implies that
$$
x_3(s) = -\tfrac13 b_2 s^3 + o(s^3).
$$
Now $s \in \Sigma^{-1}(B_H(o,r))$ if and only if $|\Sigma(s)|_H \le r$; inserting the power series expansions for the components of $\Sigma$ gives
\begin{equation}\label{s-from-r}
s^4 - \tfrac1{18} b_2^2 s^6 + o(s^6) \le r^4.
\end{equation}
A monotonicity argument (compare Lemma \ref{rho_0} for a similar convexity argument in higher dimensions) ensures the existence of unique positive and negative solutions $s_+=s_+(r)$ and $s_-=s_-(r)$ to the equation $|\Sigma(s)|_H = r$; inverting \eqref{s-from-r} yields
\begin{equation}\label{r-from-s-1}
s_+ = r + \tfrac{1}{72}b_2^2 r^3 + o(r^3).
\end{equation}
and
\begin{equation}\label{r-from-s-1a}
s_- = - r - \tfrac{1}{72}b_2^2 r^3 + o(r^3).
\end{equation}
Then
$$
\cS^1(B_H(o,r) \cap \Sigma) = s_+ - s_- = 2r + \tfrac{1}{36} b_2^2 r^3 + o(r^3)
$$
and we recall that $b_2 = \ddot{x}_2(0)$.

To conclude the proof we remove the assumptions $\Sigma(0) = o$ and $\dot\Sigma(0) = (1,0,0)$.
Indeed we may consider left translations and rotations
$R_\theta:\Heis\to\Heis$ about the $x_3$-axis by angle $\theta$.
The matrix of $R_\theta$ has the form \eqref{eq:Rotations}, so $R_\theta$
is an isometry with respect to $d_H$, along with left translations.
In addition, $R_\theta$ is also a Lie group homomorphism, so it sends metric balls into metric balls. Since this property is also satisfied by left translations, we conclude that
both of these families of mappings preserve the spherical Hausdorff measure.
We also observe that nonhorizontal directions are preserved, since both isometries and left translations are contact diffeomorphisms.

Let $\Sigma$ be an arbitrary horizontal curve and let $x=\Sigma(0)$. Define $\overline\Sigma := R_\theta^{-1} \circ \ell_x^{-1} \circ \Sigma$, where $\ell_x:\Heis\to\Heis$ denotes left translation by $x$ and $\theta$ is chosen so that $\arg(\pi\circ\ell_x^{-1}\circ\Sigma)'(0) = \theta$. Then $\dot{\overline\Sigma}(0) = (1,0,0)$. If we write $\overline\Sigma = (\overline{x_1},\overline{x_2},\overline{x_3})$ then
$$
\ddot{\overline{x_2}}(0) = \dot{x}_1(0)\ddot{x}_2(0) - \dot{x}_2(0) \ddot{x}_1(0) = k_{\gamma}(0)
$$
since $\gamma = \pi \circ \Sigma$ is unit speed parameterized. This completes the proof of Proposition \ref{propA}.
\end{proof}

\section{$2$-uniform measures and smooth fully nonhorizontal curves}\label{sec:2-uniform}

In this section we consider $2$-uniform measures. Theorem~\ref{thmB} is a consequence of a power series formula for the spherical $2$-Hausdorff measure of small balls centered at nonhorizontal points on smooth curves. In this case we make use of the first three nonzero terms in the power series following the leading order term. Specifically, we compute the expansion of $\cS^2(B_H(x,r) \cap \Sigma)$ up to the $r^{14}$ term. This expansion yields multiple differential equations satisfied by the speed of the planar projection of $\Sigma$, which we analyze in order to show that $\Sigma$ must be vertical.

In order to simplify the statement of the formula we introduce some notation. We consider a smooth curve $\Sigma:I \to \Heis$ and we denote by $\gamma = \pi \circ \Sigma$ the planar projection of $\Sigma$. We do not assume that $\gamma$ is parameterized by arc length. The standard Euclidean inner product and symplectic form in $\R^2$ are denoted $g(\vec{v},\vec{w}) = v_1w_1+v_2w_2$ and $\omega(\vec{v},\vec{w}) = v_1w_2-v_2w_1$ respectively. We denote by $\dot\gamma = \gamma^{(1)}$, $\ddot\gamma=\gamma^{(2)}$, $\dddot\gamma=\gamma^{(3)}$, and so on, and we write
$$
g_{ij} = g(\gamma^{(i)},\gamma^{(j)}) \quad \mbox{and} \quad \omega_{ij} = \omega(\gamma^{(i)},\gamma^{(j)}).
$$
We denote by $\sigma:=|\dot\gamma| = g_{11}^{1/2}$ the speed of $\gamma$, and by $k=g_{11}^{-3/2}\omega_{12}$ its planar curvature.

We will consider weighted homogeneous polynomials in the quantities $g_{ij}$ and $\omega_{ij}$, where the weight associated to index $i$ is $2i-1$. Thus, for instance, $g_{11}$ has weight $2$ while $\omega_{12}$ has weight $4$.

\begin{proposition}\label{propB}
Let $\Sigma$ be a $C^{6}$ smooth nonhorizontal curve in $\Heis$, parameterized by homogeneous arclength. Let $x = \Sigma(0)$ and set $\gamma = \pi \circ \Sigma$. Then
\begin{equation}\label{eqnB}
\cS^2(B_H(x,r)\cap\Sigma) = 2r^2 + \bb_1(x) r^6 + \bb_2(x) r^{10} + \bb_3(x) r^{14} + o(r^{14}),
\end{equation}
where $\bb_m(x)$ is a weighted homogeneous polynomial in the quantities $g_{ij}(0)$ and $\omega_{ij}(0)$ of total weight $4m$.
\end{proposition}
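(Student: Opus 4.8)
The plan is to follow the template of the proof of Proposition \ref{propA}: reduce the problem to a single scalar equation for the endpoints of the parameter interval that the ball cuts out of $\Sigma$, and then read off the expansion by inverting one scalar function.

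First I would reduce to the case $x = \Sigma(0) = o$. Left translation $\ell_x^{-1}$ is an isometry of $d_H$ preserving the left-invariant contact form $\vartheta$, so $\bar\Sigma := \ell_x^{-1}\circ\Sigma$ is again parameterized by homogeneous arclength, has $\bar\Sigma(0)=o$, and its planar projection is a translate of $\gamma$ in $\R^2$; hence $g_{ij}(0)$ and $\omega_{ij}(0)$ are unchanged. By the area-type formula for nonhorizontal curves recorded in Section \ref{subsec:curvatures} (a special case of \eqref{eq:area-formula}) together with $\vartheta(\dot\Sigma)\equiv 1$,
$$
\cS^2(B_H(o,r)\cap\Sigma) = \cH^1\big(\Sigma^{-1}(B_H(o,r))\big) = s_+(r)-s_-(r),
$$
where $s_\pm(r)$ are the two solutions of $\|\Sigma(s)\|_H = r$ near $s=0$. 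Writing $\gamma=(x_1,x_2)$ and $h=x_3$, the homogeneous arclength condition gives $\dot h = 1 - 2\omega(\gamma,\dot\gamma)$, so $h(s) = s - 2\int_0^s \omega(\gamma,\dot\gamma)\,du = s + O(s^3)$ because $\gamma(0)=o$. Setting $\rho^2 = g(\gamma,\gamma)=O(s^2)$, the defining equation becomes $Q(s):=\|\Sigma(s)\|_H^4 = h(s)^2 + \rho^2(s)^2 = s^2 + O(s^4)$, a function of class $C^6$ whose Taylor coefficients are polynomials in the derivatives $\gamma^{(i)}(0)$ up to order six.

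The crux is the following parity mechanism. Since $Q(s)=s^2+O(s^4)$ has positive leading coefficient, there is a unique $C^6$ branch $G$ of $\sqrt{Q}$ near $0$ with $G(s)=s+O(s^3)$; it satisfies $G'(0)=1$ and is therefore invertible, and a monotonicity argument as in Lemma \ref{rho_0} shows that $Q$ is strictly monotone on each side of $0$, so $s_\pm$ are well defined and smooth in $r$. Because $G(s_+)^2=G(s_-)^2=r^4$ with $s_+>0>s_-$, we have $s_+=G^{-1}(r^2)$ and $s_-=G^{-1}(-r^2)$, whence
$$
\cS^2(B_H(o,r)\cap\Sigma) = s_+-s_- = G^{-1}(r^2)-G^{-1}(-r^2) = 2\,\big(\text{odd part of } G^{-1}\big)(r^2).
$$
Only the odd powers of $y=r^2$ survive this difference, so the expansion contains exactly the powers $r^2,r^6,r^{10},r^{14},\ldots$, i.e.\ $r^{4m+2}$; the intermediate powers $r^4,r^8,r^{12}$ cancel automatically. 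Truncating at $y^7=r^{14}$ requires $Q$ to order seven, which is precisely what $C^6$ regularity of $\Sigma$ provides, yielding \eqref{eqnB} with an $o(r^{14})$ remainder and $\bb_m = 2\,[y^{2m+1}]\,G^{-1}(y)$.

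It remains to identify $\bb_m$ as a weighted homogeneous polynomial in $g_{ij}(0),\omega_{ij}(0)$ of weight $4m$. The coefficients of $Q$ are rotation-invariant scalar polynomials in the vectors $\gamma^{(i)}(0)$, since $\|\cdot\|_H$ is invariant under the rotations $R_\theta$; by the first fundamental theorem of invariant theory for $O(2)$ they are polynomials in the $g_{ij}$ and $\omega_{ij}$, and the same then holds for the $\bb_m$, which are universal polynomials in the coefficients of $Q$. The weight statement is dimensional analysis under the Heisenberg dilations $\delta_\lambda$: reparameterizing $\delta_\lambda\circ\Sigma$ by homogeneous arclength rescales $s\mapsto\lambda^2 s$, so $\gamma^{(i)}(0)\mapsto\lambda^{1-2i}\gamma^{(i)}(0)$ and hence $g_{ij}(0),\omega_{ij}(0)\mapsto\lambda^{2-2i-2j}g_{ij}(0),\lambda^{2-2i-2j}\omega_{ij}(0)$, while $\cS^2(B_H(\delta_\lambda o,\lambda r)\cap\delta_\lambda\Sigma)=\lambda^2\,\cS^2(B_H(o,r)\cap\Sigma)$. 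Matching the coefficient of $r^{4m+2}$ forces $\bb_m$ to be homogeneous of weight $(4m+2)-2=4m$ in the grading where $g_{ij},\omega_{ij}$ carry weight $2(i+j-1)$, which is the asserted weight.

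The main obstacle is not this structural argument but the explicit evaluation of the coefficients: proving \emph{existence} of the expansion with the stated weights is clean, whereas producing closed forms for $\bb_1,\bb_2,\bb_3$ (needed later for the ODE analysis behind Theorem \ref{thmB}) requires carrying the Taylor expansions of $h$ and $\rho^2$, and then the Lagrange inversion of $G$, through order seven. The one genuinely non-obvious point in the existence proof is the odd-part cancellation that produces the gap of $4$ between successive exponents; everything else is bookkeeping governed by the $\delta_\lambda$-grading.
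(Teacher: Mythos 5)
Your proposal is correct, and it proves Proposition \ref{propB} by a genuinely different route than the paper. The paper's proof is computational: it expands $x_1,x_2$ to sixth order, derives the seventh-order expansion of $x_3$ from the homogeneous-arclength ODE, expands the defining equation $(x_1^2+x_2^2)^2+x_3^2=r^4$ and inverts it term by term, obtaining explicit closed forms for $s_\pm(r)$ and hence for $\bb_1,\bb_2,\bb_3$ as in \eqref{eqnB1}--\eqref{eqnB3}; the absence of the powers $r^4,r^8,r^{12}$ appears there only as a cancellation between the $\pm$-free terms of $s_+$ and $s_-$. You replace both computations with structural arguments: the factorization $Q=G^2$ with $G(s)=s+O(s^3)$ and the identity $s_+-s_-=G^{-1}(r^2)-G^{-1}(-r^2)$ explain a priori why only the powers $r^{4m+2}$ survive, and rotation invariance (invariant theory) combined with the Heisenberg dilation scaling forces each $\bb_m$ to be a weight-$4m$ polynomial in $g_{ij}(0)$ and $\omega_{ij}(0)$. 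What your approach buys is conceptual transparency and economy; in fact it rigorously justifies the shortcut the paper itself invokes in its footnote, where $C_{14}$ is found by positing an arbitrary weight-$12$ combination of invariants and solving for its coefficients. What the paper's approach buys is the explicit formulas \eqref{eqnB1}--\eqref{eqnB3}, which your argument does not produce and which are indispensable downstream: Lemma \ref{lemQ}, Proposition \ref{thmB1} and hence Theorem \ref{thmB} rest on the precise ODEs obtained by setting those explicit coefficients to zero.

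Two small inaccuracies, neither fatal. First, the invariant-theory step should invoke $SO(2)$ rather than $O(2)$: the quantities $\omega_{ij}$ change sign under reflections, so they are generators for the rotation group only. Second, to extract $G^{-1}$, and hence $s_\pm$, through order $y^7=r^{14}$ you need the Peano expansion of $Q=G^2$ through order $s^8$, not $s^7$. This is still available under the $C^6$ hypothesis: since $\gamma(0)=0$, the expansion of $\omega(\gamma,\dot\gamma)$ holds to order $6$, so integrating $\dot h=1-2\omega(\gamma,\dot\gamma)$ gives $h$ to order $7$ with $o(s^7)$ remainder, and then $h^2$ and $g(\gamma,\gamma)^2$ admit expansions with $o(s^8)$ remainders; but the order count should be stated correctly.
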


The coefficients in \eqref{eqnB} have the following explicit forms:
\begin{equation}\label{eqnB1}
\bb_1(x) = \biggl.-g_{11}^2 + \tfrac23 \omega_{12}\biggr|_{s=0} \, ,
\end{equation}
\begin{equation}\label{eqnB2}
\bb_2(x) = \biggl.\tfrac74 g_{11}^4 - \tfrac73 g_{11}^2 \omega_{12} + \tfrac16 \omega_{12}^2 - \tfrac32 g_{12}^2 - \tfrac23 g_{11}g_{13} + \tfrac1{15} \omega_{23} + \tfrac1{10}\omega_{14}\biggr|_{s=0} \, ,
\end{equation}
and
\begin{equation}\label{eqnB3}\begin{split}
\bb_3(x) = &-\tfrac{33}{16} g_{11}^6 + \tfrac{33}{8} g_{11}^4 \omega_{12} - \tfrac{11}{8} g_{11}^2 \omega_{12}^2 - \tfrac{11}{36} \omega_{12}^3 + \tfrac{63}{8} g_{11}^2 g_{12}^2 - \tfrac1{32} g_{22}^2 \\
& \quad - \tfrac94 g_{12}^2\omega_{12} + \tfrac32 g_{11}^3g_{13} -\tfrac25 g_{11}^2 \omega_{23} - \tfrac54 g_{11} (\omega_{12}g_{13} + g_{12}\omega_{13}) \\
& \qquad - \tfrac7{90} g_{12}g_{23} - \tfrac{31}{180} g_{22}g_{13} + \tfrac1{12} (\omega_{13}^2 - g_{13}^2) + \tfrac{11}{120} \omega_{12}\omega_{14} - \tfrac18 g_{12}g_{14} \\
& \biggl.\quad \qquad - \tfrac9{40} g_{11}^2 \omega_{14} - \tfrac1{60} g_{11}g_{15} + \tfrac1{504} \omega_{34} + \tfrac1{280} \omega_{25} + \tfrac1{504} \omega_{16}\biggr|_{s=0} \, .
\end{split}\end{equation}
By analyzing the various differential equations arising by setting the right hand sides of \eqref{eqnB1}, \eqref{eqnB2} and \eqref{eqnB3} equal to zero, we deduce the following consequence.

\begin{proposition}\label{thmB1}
Let $\Sigma$ be a fully nonhorizontal real analytic curve which is contained in the support of a $2$-uniform measure in $\Heis$. Then $\Sigma$ is contained in a vertical line.
\end{proposition}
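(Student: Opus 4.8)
The plan is to follow the Kowalski--Preiss strategy: use uniformity to force the higher coefficients in \eqref{eqnB} to vanish along $\Sigma$, read off differential equations for the speed of $\gamma=\pi\circ\Sigma$, and show that the only solutions have $\gamma$ constant. To set this up, since $\mu$ is $2$-uniform Theorem~\ref{zeroth2} gives $\mu=c\,\cS^2\restrict\spt\mu$, so \eqref{eq:uniform-measure-condition} forces $\cS^2(B_H(x,r)\cap\spt\mu)$ to equal a fixed multiple of $r^2$ for every $x\in\spt\mu$ and all small $r>0$. The curve $\Sigma$ lies in the one-dimensional stratum of the analytic variety $\spt\mu$; at each $x\in\Sigma$ away from the discrete set of junction and singular points, analyticity yields $r_x>0$ with $B_H(x,r)\cap\spt\mu=B_H(x,r)\cap\Sigma$ for $r<r_x$, exactly as in the proof of Theorem~\ref{thmA}. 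Matching \eqref{eqnB} against this fixed multiple of $r^2$ power by power gives $\bb_1(x)=\bb_2(x)=\bb_3(x)=0$ at every such $x$, and since $\bb_1,\bb_2,\bb_3$ are polynomials in the derivatives of a real-analytic parameterization these identities hold on all of $\Sigma$. If $\gamma$ is constant then $\Sigma$ lies in a vertical line and we are finished; otherwise $\gamma$, being real-analytic and nonconstant, has speed $\sigma=|\dot\gamma|>0$ on a dense open set, and it suffices to reach a contradiction there.

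Next I would translate the three vanishing conditions into equations for $\sigma$. Writing the $g_{ij}$ and $\omega_{ij}$ through the planar Frenet frame of $\gamma$, the equation $\bb_1=0$ from \eqref{eqnB1} reads $\omega_{12}=\tfrac32 g_{11}^2$, that is $k=\tfrac32\sigma$: speed and curvature are proportional. Substituting $k=\tfrac32\sigma$ into \eqref{eqnB2} and clearing the factor $\sigma^2>0$, the condition $\bb_2=0$ becomes (after normalization) the second-order ODE
\begin{equation}\label{planB-star}
4\sigma\ddot\sigma - 24\dot\sigma^2 = 3\sigma^6,
\end{equation}
which in particular rules out a nonzero constant $\sigma$. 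Treating $w=\dot\sigma^2$ as a function of $\sigma$ turns \eqref{planB-star} into the linear equation $w'-\tfrac{12}{\sigma}w=\tfrac32\sigma^5$, which integrates to the first integral
\begin{equation}\label{planB-fi}
\dot\sigma^2 = C\sigma^{12} - \tfrac14\sigma^6
\end{equation}
for a constant $C$; one checks directly that \eqref{planB-fi} is a first integral of \eqref{planB-star}, and positivity of $\dot\sigma^2$ forces $C>0$.

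Finally I would exploit \eqref{planB-fi} to collapse $\bb_3=0$. Differentiating \eqref{planB-fi} expresses every derivative of $\sigma$ explicitly in terms of $\sigma$ itself: even-order derivatives as polynomials in $\sigma$ (for instance $\ddot\sigma=6C\sigma^{11}-\tfrac34\sigma^5$) and odd-order derivatives as $\dot\sigma$ times such a polynomial. Substituting $k=\tfrac32\sigma$ and these relations into \eqref{eqnB3} reduces $\bb_3$ to the form $\sigma^{12}Q_1(C\sigma^6)+\dot\sigma\,\sigma^9 Q_2(C\sigma^6)$ for one-variable polynomials $Q_1,Q_2$; the weighted homogeneity of $\bb_3$ (assigning weights $1,3,-6$ to $\sigma,\dot\sigma,C$, so that \eqref{planB-fi} is homogeneous) is what pins down this shape. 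Since $\dot\sigma=\pm\sigma^3\sqrt{C\sigma^6-\tfrac14}$ and the functions $1,\sqrt{t-\tfrac14}$ are linearly independent over the field of rational functions of $t=C\sigma^6$, and since $t$ ranges over an interval as $\sigma$ varies, the equation $\bb_3=0$ forces $Q_1\equiv Q_2\equiv 0$.

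The main obstacle is precisely the verification that this last conclusion is impossible, i.e. that $Q_1$ and $Q_2$ are not both the zero polynomial. This requires expanding \eqref{eqnB3} through the sixth derivative of $\gamma$, imposing $k=\tfrac32\sigma$, and reducing via \eqref{planB-fi}, and then exhibiting a single nonvanishing coefficient of $Q_1$ or $Q_2$ (for example by evaluating the reduced expression at a convenient value of $C\sigma^6$). The bookkeeping is heavy but entirely mechanical once \eqref{planB-fi} is in hand, since that relation removes all derivatives of $\sigma$ in favor of $\sigma$ and the single parameter $C$. Producing such a nonzero coefficient contradicts $Q_1\equiv Q_2\equiv 0$, so $\gamma$ must be constant and $\Sigma$ is contained in a vertical line, as claimed.
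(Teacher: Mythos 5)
Your skeleton is the same as the paper's: localize so that $\mu$ agrees with a multiple of $\cS^2\restrict\Sigma$ near generic points of $\Sigma$, force $\bb_1=\bb_2=\bb_3=0$, convert these into ODEs for the speed $\sigma$, and rule out nonzero solutions. Your intermediate steps are correct as far as they go: $\bb_1=0$ does give $k=\tfrac32\sigma$; your second-order equation $4\sigma\ddot\sigma-24\dot\sigma^2=3\sigma^6$ is exactly the paper's \eqref{PDE-B2} from Lemma \ref{lemQ}, multiplied by $-24$; and your first integral $\dot\sigma^2=C\sigma^{12}-\tfrac14\sigma^6$ is a valid integration of it (with the small caveat that treating $w=\dot\sigma^2$ as a function of $\sigma$ requires $\dot\sigma\ne0$, which analyticity lets you arrange on a dense open set). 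But the proof then stops at the exact point where the work has to be done. Your contradiction requires knowing that, after reducing $\bb_3$ via $k=\tfrac32\sigma$ and the first integral, the polynomials $Q_1,Q_2$ are not both identically zero, and you explicitly defer this verification as ``the main obstacle.'' Nothing you have written excludes the scenario in which $\bb_3$ collapses to $0$ identically modulo the first two equations---in which case $\bb_3=0$ carries no new information and the approach yields nothing. Since everything before this point is setup shared with the paper, the unperformed computation is not a loose end; it is the entire content of the proposition.

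The paper supplies precisely this step: Lemma \ref{lemQ} records the identities expressing every $g_{ij}$ and $\omega_{ij}$ in terms of $\sigma$ and its derivatives, reduces $\bb_3=0$ to the explicit fourth-order ODE \eqref{PDE-B3}, and then elimination of $\ddot\sigma,\dddot\sigma,\ddddot\sigma$ by means of \eqref{PDE-B2} collapses \eqref{PDE-B3} to the first-order relation \eqref{PDE-B4}, a positive combination of $\sigma^6$ and $\dot\sigma^2$ that cannot vanish where $\sigma>0$. For what it is worth, your variant of the computation does succeed: substituting $\ddot\sigma=6C\sigma^{11}-\tfrac34\sigma^5$, $\dddot\sigma=\bigl(66C\sigma^{10}-\tfrac{15}{4}\sigma^4\bigr)\dot\sigma$, and the corresponding expression for $\ddddot\sigma$ into \eqref{PDE-B3}, the odd part cancels identically (so $Q_2\equiv0$, and your linear-independence argument is not even needed), while the even part reduces to a nonzero multiple of $C^2\sigma^{24}$; hence $C=0$, and your first integral then gives $\dot\sigma^2=-\tfrac14\sigma^6<0$, the desired contradiction. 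So your route is viable and is essentially the paper's argument with the integration constant $C$ carried along---but as submitted, the decisive verification is absent, and the proposal is a plan rather than a proof.
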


We first establish Theorem \ref{thmB} conditional upon the validity of Propositions \ref{propB} and \ref{thmB1}. Proofs of Propositions \ref{propB} and \ref{thmB1} are deferred to the end of the section. We begin with a lemma.

\begin{lemma}\label{lemP}
Any real analytic curve contained in the support of a $2$-uniform measure contains no horizontal points.
\end{lemma}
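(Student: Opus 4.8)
The plan is to show that at a horizontal point the local $\cS^2$-density of the support is strictly smaller than the value that uniformity, together with Lemma~\ref{zeroth1}, forces at nonhorizontal points — which is impossible. Throughout write $M=\spt\mu$, so that $\mu=c\,\cS^2\restrict M$ by Theorem~\ref{zeroth2}, and recall from the area formula that for a nonhorizontal curve $\cS^2(B_H(x,r)\cap\Sigma)=\int_{\Sigma^{-1}(B_H(x,r))}|\vartheta(\dot\Sigma(s))|\,ds$. Let $\Sigma\subset M$ be analytic and suppose it has a horizontal point. Since $s\mapsto\vartheta(\dot\Sigma(s))$ is real analytic, it either vanishes identically or has only isolated zeros. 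In the first case $\Sigma$ is fully horizontal, hence of Hausdorff dimension one and $\cS^2$-null; as $M$ is a one-dimensional analytic variety, its singular locus is zero-dimensional, so a generic interior point of such an arc is a manifold point near which $M$ coincides with $\Sigma$, forcing $\mu=0$ in a neighbourhood and contradicting that the point lies in $\spt\mu$. Hence every horizontal point of $\Sigma$ is isolated.

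Fix such an isolated horizontal point $y=\Sigma(0)$. Using a left translation $\ell_{y}$ and a rotation $\tilde R$ as in \eqref{eq:Rotations} — both $d_H$-isometries, group homomorphisms, and hence preserving $\cS^2$ and the contact structure, exactly as in the proof of Proposition~\ref{propA} — I normalize so that $y=o$ and $\dot\Sigma(0)=(1,0,0)$. Writing $\vartheta(\dot\Sigma(s))=a s^{k}+o(s^{k})$ with $a\neq0$ and $k\ge1$, a short computation with $x_1(s)=s+o(s)$ and $x_2(s)=O(s^2)$ gives $x_3(s)=\tfrac a2 s^2+O(s^3)$ when $k=1$ and $x_3(s)=O(s^3)$ when $k\ge2$, so that $\|\Sigma(s)\|_H^4=(1+\tfrac{a^2}{4})s^4+o(s^4)$ when $k=1$ and $\|\Sigma(s)\|_H^4=s^4+o(s^4)$ when $k\ge2$. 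Inverting $\|\Sigma(s)\|_H=r$ to find the endpoints $s_\pm(r)$ and inserting into the area formula yields
\begin{equation*}
\cS^2(B_H(o,r)\cap\Sigma)=C_0\,r^2+o(r^2),\qquad C_0=\tfrac{2|a|}{\sqrt{4+a^2}}\ \ (k=1),\quad C_0=0\ \ (k\ge2).
\end{equation*}
In every case $C_0<2$, the strict inequality $2|a|/\sqrt{4+a^2}<2$ being equivalent to the trivial $a^2<4+a^2$. This is the geometric heart of the argument: the anisotropy of $d_H$ makes the $\cS^2$-mass accumulate more slowly near a horizontal tangent.

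To close, observe that the nonhorizontal points $z=\Sigma(s)$, $0<|s|<\eps$, are manifold points of $M$ of degree two near which $M$ coincides with $\Sigma$. Lemma~\ref{zeroth1} then gives $\lim_{r\to0}\cS^2(B_H(z,r)\cap\Sigma)/r^2=2$, so the uniformity of $\mu$ forces the constant $c_0$ in $\mu(B_H(\cdot,r))=c_0r^2$ to satisfy $c_0=2c$. Applying $\mu=c\,\cS^2\restrict M$ at $y$ (where, $y$ being a manifold point, $M=\Sigma$ locally) gives $\mu(B_H(y,r))=c\,C_0\,r^2+o(r^2)$, which contradicts $\mu(B_H(y,r))=2c\,r^2$ because $C_0<2$. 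Therefore $\Sigma$ has no horizontal point.

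The step I expect to require the most care is the reduction ``$M=\Sigma$ locally at $y$''. This is immediate when $y$ is a manifold point of $M$ — in particular for every curve in the stratum $M_{(1)}$, which is all that the proof of Theorem~\ref{thmB} invokes — but if $y$ were a singular point of $M$ one must instead compare the \emph{sum} of the $\cS^2$-densities of all analytic branches meeting at $y$ with the value $2$ dictated by uniformity. Each nonhorizontal branch contributes exactly $2$ by Lemma~\ref{zeroth1}, while a horizontal branch contributes $C_0<2$; a single parabolic or horizontal tangent cone is thereby excluded, but a horizontal branch with $k\ge2$ (contributing $0$) could in principle be masked by a transverse degree-two sheet, so the branch bookkeeping at a genuine crossing is the one genuinely delicate point. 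An equivalent blow-up/tangent-measure formulation — every tangent measure of $\mu$ is again $2$-uniform and supported on a dilation-invariant analytic $1$-variety, which must be the vertical axis and hence cannot contain the nonvertical tangent cone of $\Sigma$ — recasts, but does not by itself dissolve, this same difficulty at multi-branch vertices.
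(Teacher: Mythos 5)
Your proof is correct and takes essentially the same route as the paper's: both argue that at an isolated horizontal point the local $\cS^2$-density of the curve drops strictly below $2$ (your $C_0=2|a|/\sqrt{4+a^2}$ is exactly the paper's value $2\bigl(\tfrac14\ddot x_3(0)^2/(\dot x_1(0)^4+\tfrac14\ddot x_3(0)^2)\bigr)^{1/2}$ under your normalization $\dot x_1(0)=1$, $\ddot x_3(0)=a$), contradicting $2$-uniformity via Lemma~\ref{zeroth1}. The only differences are presentational: the paper extracts the density by the blow-up substitution $s=r\tau$ rather than by Taylor-expanding $\vartheta(\dot\Sigma)$ (its formula covers your cases $k=1$ and $k\ge 2$ at once), and it leaves implicit both the fully horizontal case and the local identification $\spt\mu=\Sigma$ that you spell out and correctly observe is all that the proof of Theorem~\ref{thmB} requires.
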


\begin{proof}
Let $\Sigma:I \to \Heis$ be a real analytic curve contained in the support of a $2$-uniform measure $\mu$. Analyticity implies that there can be no non-isolated horizontal points in $\Sigma$. Assume that $0 \in I$ with $\Sigma(0) = o$, that $\Sigma(s)$ is nonhorizontal for all $s \ne 0$ in $I$, and that $\Sigma(0)$ is horizontal. We claim that $\lim_{r\to 0} r^{-2} \mu(B_H(o,r) \ne 2$; recall that this limit is equal to $2$ at nonhorizontal points by Lemma \ref{zeroth1}. Since $\mu$ is $2$-uniform, this leads to a contradiction.

To this end, we compute
\begin{equation}\label{1}
\frac{\mu(B_H(0,r))}{r^2} = \frac1{r^2} \int_{\Sigma^{-1}(B_H(0,r))} |\vartheta(\dot\Sigma(s))| \, ds.
\end{equation}
To simplify later computations we perform an initial rotation about the vertical axis and assume that $\dot{x}_2(0)=0$, where $\Sigma=(x_1,x_2,x_3)$. We next make the change of variables $s = r \tau$, which transforms the right hand side of \eqref{1} into
$$
\frac1r \int_{r^{-1} \Sigma^{-1}(B_H(0,r))} |\vartheta(\dot\Sigma(r\tau))| \, d\tau.
$$
Note that $\tau \in r^{-1} \Sigma^{-1}(B_H(0,r))$ if and only if $\Sigma(r\tau) \in B_H(0,r)$, which in turn occurs if and only if
$$
\left( \frac{x_1(r\tau)}{r} , \frac{x_2(r\tau)}{r} , \frac{x_3(r\tau)}{r^2} \right) \in B_H(0,1).
$$
We conclude that
$$
\lim_{r\to 0} \frac{\mu(B_H(0,r))}{r^2} = \lim_{r\to 0} \int_{r^{-1} \Sigma^{-1}(B_H(0,r))} \frac1r |\vartheta(\dot\Sigma(r\tau))| \, d\tau,
$$
and since $\vartheta(\dot\Sigma(0)) = 0$ and $\vartheta(\ddot\Sigma(0)) = \ddot{x}_3(0)$, we obtain
$$
\lim_{r\to 0} \frac{\mu(B_H(0,r))}{r^2} = |\vartheta(\ddot\Sigma(0))| \, \int_{J_0} |\tau| \, d\tau = |\ddot{x}_3(0)| \, \int_{J_0} |\tau| \, d\tau,
$$
where
$$
J_0 = \{ \tau \, : \, (\dot{x}_1(0)\tau,0,\tfrac12 \ddot{x}_3(0)\tau^2) \in B_H(0,1) \}.
$$
Since $\tau \in J_0$ if and only if $(\dot{x}_1(0)^4 + \tfrac14 \ddot{x}_3(0)^2) \tau^4 < 1$, we conclude that
\begin{equation*}\begin{split}
\int_{J_0} |\tau| \, d\tau = 2 \int_0^{(\dot{x}_1(0)^4 + \tfrac14 \ddot{x}_3(0)^2)^{-1/4}} \tau \, d\tau = \frac1{(\dot{x}_1(0)^4 + \tfrac14 \ddot{x}_3(0)^2)^{1/2}}
\end{split}\end{equation*}
and hence
$$
\lim_{r\to 0} \frac{\mu(B_H(0,r))}{r^2} = 2 \sqrt{\frac{\tfrac14 \ddot{x}_3(0)^2}{\dot{x}_1(0)^4 + \tfrac14 \ddot{x}_3(0)^2}} < 2.
$$
It follows that if $\mu$ is $2$-uniform, then $\spt\mu$ contains no horizontal points.
\end{proof}

\begin{proof}[Proof of Theorem \ref{thmB}]
Let $\mu$ be a $2$-uniform measure on $\Heis$.
As in the proof of Theorem~\ref{thmA}, due to \cite[Proposition~3.1]{ct:marstrand} $M=\spt\mu$ is a real analytic variety. Lojasiewicz's Structure Theorem, together with Theorem~\ref{zeroth2} where $\ovm=2$ and formula \eqref{eq:area-formula}, implies that $M = M_{(0)} \cup M_{(1)}$ is a union of countably many analytic submanifolds whose dimensions are at most one. Moreover, the open subset of points with degree two in any such curve is dense. By the definition of degree, \eqref{d:degree}, the velocity vector is not horizontal at these points. Lemma \ref{lemP} ensures that any such curve is fully nonhorizontal.

According to Proposition \ref{thmB1}, each of the (fully nonhorizontal) curves in $M_{(1)}$ is purely vertical. By $2$-uniformity, there can be no isolated points in $M_{(0)}$.  The full support must therefore consist of nontrivial vertical line segments (possibly infinite at one or both ends). Endpoints are not allowed (since the measures of small balls at such points would be too small). Thus $\spt\mu$ is a union of affine vertical lines. Again using $2$-uniformity, we deduce that $\spt\mu$ must be connected. Hence the support is a single vertical line and $\mu$ is proportional to $\cS^2_H$ restricted to the support. This completes the proof.
\end{proof}

We now turn to the proof of Proposition \ref{propB} and its corollary, Proposition \ref{thmB1}.

\begin{proof}[Proof of Proposition \ref{propB}]
Let $\Sigma=(x_1,x_2,x_3)$ and $x = \Sigma(0)$ be as in the statement of the proposition. After a left translation, we may assume that $\Sigma(0) = o = (0,0,0)$. We expand $x_1(s)$ and $x_2(s)$ to sixth order:
\begin{equation*}\begin{split}
x_1(s) &= \sum_{m=1}^6 \tfrac1{m!}a_m s^m + o(s^6) \\
x_2(s) &= \sum_{m=1}^6 \tfrac1{m!}b_m s^m + o(s^6)
\end{split}\end{equation*}
and use the fact that $\Sigma$ is parameterized by homogeneous arclength to determine a power series expansion for $x_3(s)$. From the differential equation
$$
\dot{x}_3 + 2x_1\dot{x}_2 - 2x_2\dot{x}_1 = 1
$$
we find that
$$
x_3(s) = s + \sum_{m=3}^7 c_ms^m + o(s^7)
$$
with
\begin{align*}
c_3 &= \tfrac13(a_2b_1-a_1b_2), \\
c_4 &= \tfrac16(a_3b_1-a_1b_3), \\
c_5 &= \tfrac1{20}(a_4b_1-a_1b_4) + \tfrac1{30}(a_3b_2-a_2b_3), \\
c_6 &= \tfrac1{90}(a_5b_1-a_1b_5) + \tfrac1{72}(a_4b_2-a_2b_4),
\end{align*}
and
$$
c_7 = \tfrac1{504}(a_6b_1-a_1b_6) + \tfrac1{280}(a_5b_2-a_2b_5) + \tfrac1{504}(a_4b_3-a_3b_4).
$$
Theorem \ref{th:magnani-area-formula} implies that
\begin{equation}\label{proofBeqn1}
\cS^2(B_H(o,r) \cap \Sigma) = \int_{\Sigma^{-1}(B_H(o,r))} |(\dot\Sigma(s))_{(2)}| \, ds\,.
\end{equation}
Note that
$$
\dot\Sigma(s) = \dot{x}_1(s) X_1 + \dot{x}_2(s) X_2 + \vartheta(\dot\Sigma(s)) X_3
$$
and so
$$
(\dot\Sigma(s))_{(2)} = \vartheta(\dot\Sigma(s)) = 1.
$$
It suffices to compute the interval of definition
$$
\Sigma^{-1}(B_H(o,r)) = (s_-(r),s_+(r)).
$$
The endpoints $s_\pm(r)$ are determined by solving the equation
$$
(x_1(s)^2+x_2(s)^2)^2 + x_3(s)^2 = r^4.
$$
We expand the left hand side in a series in $s$ and invert. Omitting extensive algebraic manipulations and making use of the notation $g_{ij},\omega_{ij}$ introduced at the beginning of this section, we record the following formulas for $s_\pm(r)$:
\begin{equation*}\begin{split}
s_\pm(r) = &\pm r^2 \pm \biggl( -\tfrac12g_{11}^2 + \tfrac13\omega_{12} \biggr) r^6 + \biggl( -g_{11}g_{12} + \tfrac16g_{13} \biggr) r^8 \\
&\qquad \pm \biggl( \tfrac78g_{11}^4 - \tfrac76g_{11}^2\omega_{12} + \tfrac1{12}\omega_{12}^2 - \tfrac34g_{12}^2 -\tfrac13g_{11}g_{13} + \tfrac1{30}\omega_{23} + \tfrac1{20}\omega_{14} \biggr) r^{10} \\
&\qquad \qquad + C_{12}(a_1,\ldots,a_5,b_1,\ldots,b_5)r^{12} \pm C_{14}(a_1,\ldots,a_6,b_1,\ldots,b_6)r^{14}+ o(r^{14}).
\end{split}\end{equation*}
Here all of the quantities $g_{ij}$ and $\omega_{ij}$ are assumed to be evaluated at $s=0$, and $C_{14}$ is one half of the quantity on the right hand side of \eqref{eqnB3}.\footnote{One approach to computing the coefficients in the formula for $C_{14}$ is to write down an arbitrary linear combination of all weighted homogeneous polynomials of degree $12$ in the $g_{ij}$'s and $\omega_{ij}$'s, compare this to the coefficient of $r^{14}$ in the series, and solve for the coefficients.} We omit the value of the coefficient $C_{12}$ as it does not appear in the final formula. We now simply compute
$$
\cS^2(B_H(o,r) \cap \Sigma) = s_+(r) - s_-(r)
$$
i.e.\ $\cS^2(B_H(o,r) \cap \Sigma)$ is equal to
\begin{equation*}\begin{split}
& 2r^2 + \biggl( -g_{11}^2 + \tfrac23\omega_{12} \biggr) r^6 + \biggl( \tfrac74g_{11}^4 - \tfrac73g_{11}^2\omega_{12} + \tfrac1{6}\omega_{12}^2 - \tfrac32g_{12}^2 -\tfrac23g_{11}g_{13} + \tfrac1{15}\omega_{23} + \tfrac1{10}\omega_{14} \biggr) r^{10} \\
&\quad + 2C_{14}(a_1,\ldots,a_6,b_1,\ldots,b_6)r^{14} + o(r^{14}).
\end{split}\end{equation*}
In order to remove the assumption that $\Sigma(0)=o$ we perform a left translation by the point $x$, and observe that the action of a Heisenberg left translation in the first two coordinates is simply a Euclidean translation, which preserves the quantities $g_{ij}$ and $\omega_{ij}$. This completes the proof.
\end{proof}

\begin{proof}[Proof of Proposition~\ref{thmB1}]
Let $\mu$ be a $2$-uniform measure, and let $\Sigma$ be a fully nonhorizontal curve contained in the $1$-dimensional stratum of $\spt\mu$. We may assume without loss of generality that $\Sigma$ is parameterized by homogeneous arclength, and that $\mu$ agrees with the spherical $2$-Hausdorff measure $\cS^2_H$ restricted to $\Sigma$. As usual, we denote by $\gamma$ the planar projection of $\Sigma$. By analyticity, either $\Sigma$ is purely vertical, or $\dot\gamma$ is nonzero on a dense open subset of the parameterizing interval. We will show that the latter case cannot occur.

Let us suppose that $\gamma$ is nondegenerate on a nonempty open interval. Since $\cS^2 \restrict \Sigma$ is a (locally) $2$-uniform measure, the coefficients $\bb_1,\bb_2,\bb_3$ in \eqref{eqnB} must vanish throughout $\Sigma$. Equating $\bb_1$ to zero shows that the curvature of $\gamma$ is proportional to its speed:
\begin{equation}\label{EQ1}
\sigma:=|\dot\gamma| = \frac23 k \,.
\end{equation}
We will state the remaining two differential equations in terms of the speed $\sigma$. We show that the only solution to this pair of equations is $\sigma=0$. This contradicts our previous assumption.

\begin{lemma}\label{lemQ}
The differential equation obtained by setting $\bb_2=0$ is
\begin{equation}\label{PDE-B2}
-\frac16 \sigma \ddot\sigma + \dot\sigma^2 + \frac18 \sigma^6 = 0.
\end{equation}
while the differential equation obtained by setting $\bb_3=0$ is
\begin{equation}\begin{split}\label{PDE-B3}
-\frac4{39} \sigma^3 \ddddot\sigma + \frac{28}{39} \sigma^2 \dot\sigma \dddot\sigma + \frac{10}{13} \sigma^2 \ddot\sigma^2 + \frac{16}3 \sigma \dot\sigma^2 \ddot\sigma + \frac{10}{13} \sigma^7 \ddot\sigma + \dot\sigma^4 + \frac72 \sigma^6 \dot\sigma^2 - \frac9{208} \sigma^{12} = 0.
\end{split}\end{equation}
\end{lemma}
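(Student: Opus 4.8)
The plan is to express each of the quantities $g_{ij}$ and $\omega_{ij}$ appearing in \eqref{eqnB2} and \eqref{eqnB3} in terms of the speed $\sigma$ and the signed curvature $k$ of the planar curve $\gamma$, together with their $s$-derivatives, and then to impose the first-order relation \eqref{EQ1}. First I would introduce the Frenet frame of $\gamma$: let $T = \sigma^{-1}\dot\gamma$ be the unit tangent and $N$ the unit normal obtained by rotating $T$ counterclockwise by $\pi/2$, so that $g(T,T) = g(N,N) = 1$, $g(T,N)=0$ and $\omega(T,N)=1$. Since the planar arclength element is $d\ell = \sigma\,ds$, the Frenet equations become $\dot T = \sigma k\, N$ and $\dot N = -\sigma k\, T$. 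Writing $\gamma^{(i)} = \alpha_i T + \beta_i N$, these give the recursion
$$
\alpha_{i+1} = \dot\alpha_i - \sigma k\,\beta_i, \qquad \beta_{i+1} = \sigma k\,\alpha_i + \dot\beta_i,
$$
starting from $\alpha_1 = \sigma$, $\beta_1 = 0$. Thus each $\alpha_i$ and $\beta_i$ is an explicit polynomial in $\sigma$, $k$ and their derivatives, which I would compute up to $i = 6$.

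Once the frame coefficients are in hand, the orthonormality of $\{T,N\}$ yields the clean formulas $g_{ij} = \alpha_i\alpha_j + \beta_i\beta_j$ and $\omega_{ij} = \alpha_i\beta_j - \beta_i\alpha_j$, so every $g_{ij}$ and $\omega_{ij}$ becomes a polynomial in $\sigma$, $k$ and their derivatives; in particular one recovers $g_{11} = \sigma^2$ and $\omega_{12} = \sigma^3 k$. At this point I would substitute the consequence \eqref{EQ1} of $\bb_1 = 0$, namely $k = \tfrac32\sigma$, along with its differentiated forms $\dot k = \tfrac32\dot\sigma$, $\ddot k = \tfrac32\ddot\sigma$, and so on. This collapses the two functions $\sigma$ and $k$ into one and produces substantial cancellation.

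Carrying this out for \eqref{eqnB2}, the seven terms combine after substitution to $\bb_2 = \tfrac1{80}\sigma^8 + \tfrac1{10}\sigma^2\dot\sigma^2 - \tfrac1{60}\sigma^3\ddot\sigma$; dividing by $\sigma^2$ and scaling by $10$ yields exactly \eqref{PDE-B2}. The identity \eqref{PDE-B3} is obtained in the same manner from \eqref{eqnB3}, and this is where the main obstacle lies: the formula for $\bb_3$ involves fourteen distinct products $g_{ij},\omega_{ij}$, the highest-order of which, $\omega_{16}$, requires the frame coefficient $\beta_6$ and hence the fourth derivative $\ddddot\sigma$. Expanding all of these, substituting $k = \tfrac32\sigma$, and collecting terms is a lengthy symbolic computation in which the overwhelming majority of contributions cancel, leaving the eight-term polynomial \eqref{PDE-B3}.

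A reliable internal check at each stage is weighted homogeneity. Assigning weight $2i-1$ to $\gamma^{(i)}$ forces $\tfrac{d}{ds}$ to carry weight $2$, so $\sigma^{(j)}$ has weight $2j+1$ and, since $k$ also has weight $1$, the left-hand side of \eqref{PDE-B3} must be weighted homogeneous of total weight $12$; one verifies directly that each of its eight monomials has weight $12$. This homogeneity sharply constrains the admissible terms and flags any algebraic slip, so I would organize the $\bb_3$ computation around it, ideally with the assistance of a computer algebra system.
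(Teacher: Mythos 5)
Your proposal is correct and reaches the same endpoint as the paper, but through a genuinely different computational scheme. The paper never introduces a frame: it bootstraps the scalars $g_{ij}$, $\omega_{ij}$ directly, using the differentiation rules \eqref{g-ij-prime}, \eqref{omega-ij-prime} together with the two algebraic identities \eqref{g2-omega2} and \eqref{cocycle-type-identity}, and after substituting $\omega_{12}=\tfrac32\sigma^4$ (i.e.\ $k=\tfrac32\sigma$ from \eqref{EQ1}) it records the explicit list (1)--(20) of expressions for each $g_{ij}$, $\omega_{ij}$ in terms of $\sigma$ alone, which are then inserted into \eqref{eqnB2} and \eqref{eqnB3}. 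Your route instead parameterizes $\gamma^{(i)}=\alpha_i T+\beta_i N$ via the Frenet frame and the linear recursion $\alpha_{i+1}=\dot\alpha_i-\sigma k\beta_i$, $\beta_{i+1}=\sigma k\alpha_i+\dot\beta_i$, recovering $g_{ij}=\alpha_i\alpha_j+\beta_i\beta_j$ and $\omega_{ij}=\alpha_i\beta_j-\beta_i\alpha_j$; the paper's two algebraic identities are then automatic (they are Lagrange/Binet--Cauchy identities for the pairs $(\alpha_i,\beta_i)$), so your scheme is a uniform mechanical generator of the same table of expressions --- indeed it reproduces the paper's entries, e.g.\ $g_{12}=\sigma\dot\sigma$, $g_{22}=\dot\sigma^2+\tfrac94\sigma^6$, $\omega_{23}=-\tfrac32\sigma^3\ddot\sigma+6\sigma^2\dot\sigma^2+\tfrac{27}8\sigma^8$, and $g_{15}$, $\omega_{15}$ at the top end. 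What your approach buys: the recursion is linear, requires no choice of which identity to apply at each stage (the paper's derivation of (13), for instance, needs a judicious application of \eqref{cocycle-type-identity}), the relations $\omega_{1j}=\sigma\beta_j$ and $g_{1j}=\sigma\alpha_j$ come for free since $\beta_1=0$, and the weighted-homogeneity bookkeeping (weight $2j+1$ for $\sigma^{(j)}$, total weight $12$ for every monomial of \eqref{PDE-B3}) is a genuine safeguard for the heavy algebra. What the paper's route buys: it stays entirely within the quantities that actually occur in $\bb_2$, $\bb_3$, and the redundancy of its identities provides internal cross-checks. Your explicit intermediate claim $\bb_2=\tfrac1{80}\sigma^8+\tfrac1{10}\sigma^2\dot\sigma^2-\tfrac1{60}\sigma^3\ddot\sigma$ is arithmetically correct (it is exactly $\tfrac{\sigma^2}{10}$ times the left side of \eqref{PDE-B2}), which the paper leaves unstated; like the paper, you defer the $\bb_3$ collection of terms to symbolic computation, so the two write-ups sit at a comparable level of completeness on that final step.
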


\

We postpone the proof of this lemma. Assume that the speed $\sigma$ of $\gamma$ satisfies both \eqref{PDE-B2} and \eqref{PDE-B3}. We use \eqref{PDE-B2} to reduce \eqref{PDE-B3} to the following first-order ODE for $\sigma$:
\begin{equation}\label{PDE-B4}
81\sigma^6 + 32\dot{\sigma}^2 = 0.
\end{equation}
Hence $\sigma$ must be identically equal to zero. This completes the proof of Proposition \ref{thmB1}, modulo the lemma.
\end{proof}

It remains to prove Lemma \ref{lemQ}.

\begin{proof}[Proof of Lemma \ref{lemQ}]
We write $\sigma = |\dot\gamma| = \sqrt{\dot{x}_1^2+\dot{x}_2^2}$ for the speed of $\gamma$; equation \eqref{EQ1} reads $\sigma = \tfrac23k$.
In order to simplify the equations $\bb_2=0$ and $\bb_3=0$, we derive formulas for the quantities $g_{ij}$ and $\omega_{ij}$ in terms of $\sigma$. We use the following differential and algebraic identities:
\begin{equation}\label{g-ij-prime}
g_{ij}' = g_{i,j+1}+g_{i+1,j},
\end{equation}
\begin{equation}\label{omega-ij-prime}
\omega_{ij}' = \omega_{i,j+1} + \omega_{i+1,j},
\end{equation}
\begin{equation}\label{g2-omega2}
g_{ij}^2 + \omega_{ij}^2 = g_{ii}g_{jj},
\end{equation}
and
\begin{equation}\label{cocycle-type-identity}
\omega_{ij}\omega_{jk} = g_{ij}g_{jk} - g_{ik}g_{jj}.
\end{equation}
Repeated use of these identities yields explicit formulas for the $g_{ij}$'s and $\omega_{ij}$'s.
\begin{itemize}
\item[(1)] $g_{11} = \sigma^2$.

\noindent This is a restatement of the definition of $\sigma$.
\item[(2)] $\omega_{12} = \tfrac32 \sigma^4$.

\noindent This follows from the definition of curvature and equation \eqref{EQ1}.
\item[(3)] $g_{12} = \sigma \dot\sigma$.

\noindent This follows from (1) and an application of \eqref{g-ij-prime} with $i=j=1$.
\item[(4)] $g_{22} = \dot\sigma^2 + \tfrac94 \sigma^6$.

\noindent This follows from (1), (2) and (3) and an application of \eqref{g2-omega2} with $i=1$ and $j=2$.
\item[(5)] $g_{23} = \dot\sigma\ddot\sigma + \tfrac{27}4\sigma^5\dot\sigma$.

\noindent This follows from (4) and an application of \eqref{g-ij-prime} with $i=j=2$.
\item[(6)] $\omega_{13} = 6 \sigma^3 \dot\sigma$.

\noindent This follows from (2) and an application of \eqref{omega-ij-prime} with $i=1$ and $j=2$. (Note that $\omega_{22}=0$.) Subsequent identities are proved in much the same way, and we omit any further explanation.
\item[(7)] $g_{13} = \sigma \ddot\sigma - \tfrac94 \sigma^6$.

\item[(8)] $g_{33} = (\ddot\sigma - \tfrac94\sigma^5)^2 + 36 \sigma^4 \dot\sigma^2$.

\item[(9)] $g_{34} = (\ddot\sigma -\tfrac94 \sigma^5)(\dddot\sigma - \tfrac{45}4 \sigma^4 \dot\sigma) + 36 \sigma^4 \dot\sigma \ddot\sigma + 72 \sigma^3 \dot\sigma^3$.

\item[(10)] $g_{14} = \sigma \dddot\sigma - \tfrac{81}4 \sigma^5 \dot\sigma$.

\item[(11)] $g_{24} = \dot\sigma \dddot\sigma + \tfrac{45}4 \sigma^5 \ddot\sigma - \tfrac94 \sigma^4 \dot\sigma^2 - \tfrac{81}{16} \sigma^{10}$.

\item[(12)] $\omega_{23} = -\tfrac32 \sigma^3 \ddot\sigma + 6 \sigma^2 \dot\sigma^2 + \tfrac{27}8 \sigma^8$.

\item[(13)] $\omega_{14} = \tfrac{15}2 \sigma^3 \ddot\sigma + 12 \sigma^2 \dot\sigma^2 - \tfrac{27}8 \sigma^8$.
\end{itemize}
Identity (13) follows either from (1), (2), (3), (10), (11) and an application of \eqref{cocycle-type-identity} with $i=2$, $j=1$ and $k=4$, or from (1), (6), (7), (9), (10) and an application of \eqref{cocycle-type-identity} with $i=3$, $j=1$ and $k=4$. Using the preceding list of identities, we simplify the formula for $\bb_2$ in \eqref{eqnB2}. The equation $\bb_2 = 0$ reduces to \eqref{PDE-B2}.

We continue with the preceding list of identities.
\begin{itemize}
\item[(14)] $g_{44} = (\dddot\sigma-\tfrac{81}4\sigma^4\dot\sigma)^2+(\tfrac{15}2\sigma^2\ddot\sigma+12\sigma\dot\sigma^2-\tfrac{27}8\sigma^7)^2$.

\item[(15)] $g_{15} = \sigma \ddddot\sigma - \tfrac{63}2\sigma^5\ddot\sigma-99\sigma^4\dot\sigma^2+\tfrac{81}{16}\sigma^{10}$.

\item[(16)] $\omega_{24} = -\tfrac32 \sigma^3 \dddot\sigma + \tfrac{15}2\sigma^2\dot\sigma\ddot\sigma + 12\sigma \dot\sigma^3 + 27 \sigma^7\dot\sigma$.

\item[(17)] $\omega_{34} = -6\sigma^2\dot\sigma\dddot\sigma + \tfrac{15}2 \sigma^2\ddot\sigma^2 + 12 \sigma \dot\sigma^2\ddot\sigma - \tfrac{81}2 \sigma^7 \ddot\sigma + \tfrac{189}2 \sigma^6 \dot\sigma^2 + \tfrac{243}{32}\sigma^{12}$.

\item[(18)] $\omega_{25} = \tfrac32 \sigma^3 \ddddot\sigma + 9 \sigma^2\dot\sigma\dddot\sigma + 39 \sigma \dot\sigma^2 \ddot\sigma + \tfrac{135}2 \sigma^7 \ddot\sigma + 12 \dot\sigma^4 + \tfrac{189}2 \sigma^6 \dot\sigma^2 - \tfrac{243}{32} \sigma^{12}$.

\item[(19)] $\omega_{15} = 9 \sigma^3 \dddot\sigma + 39 \sigma^2\dot\sigma\ddot\sigma + 12 \sigma \dot\sigma^3 - 54 \sigma^7 \dot\sigma$.

\item[(20)] $\omega_{16} = \tfrac{21}2 \sigma^3 \ddddot\sigma + 57 \sigma^2\dot\sigma\dddot\sigma + 75 \sigma \dot\sigma^2 \ddot\sigma + 39 \sigma^2 \ddot\sigma^2 - \tfrac{243}2 \sigma^7 \ddot\sigma - \tfrac{945}2 \sigma^6 \dot\sigma^2 + \tfrac{243}{32} \sigma^{12}$.
\end{itemize}
Using these identities, we simplify the formula for $\bb_3$ in \eqref{eqnB3}. The equation $\bb_3 = 0$ reduces to \eqref{PDE-B3}.
\end{proof}

\section{$3$-uniform measures and smooth fully noncharacteristic surfaces}\label{sec:3-uniform}

The geometric equation \eqref{PDE-C} is an immediate consequence of the following power series formula for the local measure of small Kor\'anyi balls at noncharacteristic points of smooth surfaces.

\begin{theorem}\label{theoC}
If $\Sigma$ is a $C^4$ smooth surface in $\Heis$ and $x \in \Sigma$
is a noncharacteristic point, then the following expansion holds
\begin{equation}\label{eqnC}
\cS^3(B_H(x,r)\cap\Sigma) = \bc_0 r^3 + \bc_1(x) r^5 + o(r^{5}),
\end{equation}
where $\bc_0$ is a geometric constant and $\bc_1(x)$ agrees, up to a fixed multiplicative constant, with the quantity
$$
\cH_0(x)^2 + \frac32 \cP_0(x)^2 + 4 (\vec{e}_1)_x(\cP_0)(x) \, .
$$
\end{theorem}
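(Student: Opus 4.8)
The plan is to follow the strategy of Propositions~\ref{propA} and \ref{propB}, now in the two-dimensional setting, extracting the expansion directly from the area formula \eqref{eq:area-formula}. First I would normalize: since left translations and rotations $\tilde R$ as in \eqref{eq:Rotations} are isometries of $(\Heis,d_H)$ preserving $\cS^3$ and the contact structure, I may assume $x=o=(0,0,0)$ and that the characteristic direction and horizontal normal at $o$ are placed in standard position, say $(\vec e_1)_o=(X_1)_o$ and $\vec n_0(o)=(X_2)_o$ up to sign. Because $o$ is noncharacteristic the Legendrian foliation is defined near $o$, so I would parameterize $\Sigma$ by an adapted map $\Phi(s,t)$ (equivalently, a graph adapted to the noncharacteristic condition) in which, for each fixed $t$, the curve $s\mapsto\Phi(s,t)$ is the unit-speed horizontal Legendrian leaf through a fixed transversal; thus $\partial_s\Phi$ is exactly the characteristic vector field $\vec e_1$. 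Since $\Sigma$ is $C^4$ I would Taylor-expand $\Phi(s,t)=(x_1,x_2,x_3)(s,t)$ to fourth order in $(s,t)$, using the horizontal-arclength and Legendrian conditions to eliminate the redundant jet coefficients.

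Next I would compute the two ingredients entering the area formula. For $m=2$, $\ovm=3$ the integrand is $|(\partial_s\Phi\wedge\partial_t\Phi)_{(3)}|$, and since a tangent vector splits into a horizontal (degree $1$) and a vertical (degree $2$) part, the degree-$3$ component of the wedge is precisely the (horizontal)$\wedge$(vertical) piece, spanned by $X_1\wedge X_3$ and $X_2\wedge X_3$. I would expand $|(\partial_s\Phi\wedge\partial_t\Phi)_{(3)}|$ as a series in $(s,t)$ from the Taylor data of $\Phi$. In parallel I would expand the Kor\'anyi gauge $\|\Phi(s,t)\|_H^4=(x_1^2+x_2^2)^2+x_3^2$ in $(s,t)$, so that the sublevel set $\{\|\Phi(s,t)\|_H\le r\}$ describes the integration region $\Phi^{-1}(B_H(o,r))$.

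I would then carry out the integration using the anisotropic rescaling adapted to the Heisenberg dilations (horizontal coordinates $\sim r$, the vertical contribution $\sim r^2$), converting $\int_{\Phi^{-1}(B_H(o,r))}|(\partial_s\Phi\wedge\partial_t\Phi)_{(3)}|$ into an integral over an $r$-independent model region plus controlled corrections. The leading term reproduces the flat value $\bc_0 r^3$ computed in Lemma~\ref{zeroth1}; a symmetry of the rescaled model (invariance of the leading region and integrand under a suitable sign change of the parameters) forces the $r^4$ coefficient to vanish, leaving the first correction at order $r^5$. Finally I would reorganize the raw Taylor coefficients appearing in the $r^5$ term into intrinsic quantities, matching them against the level-set formulas \eqref{H0-level-set}, \eqref{P0-level-set} and \eqref{e1-level-set}: the purely second-order $s$-data of $\pi\circ\Phi(\cdot,0)$ produces $\cH_0(x)^2$, the transverse vertical data produces $\tfrac32\cP_0(x)^2$, and differentiating $\cP_0$ along $\partial_s\Phi=\vec e_1$ produces $4(\vec e_1)_x(\cP_0)(x)$; the invariance of this combination under the sign ambiguity in the normal confirms that it is intrinsic, as observed just before \eqref{PDE-C}.

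The main obstacle is the interaction of the anisotropic scaling with a genuinely two-dimensional domain of integration whose boundary depends nontrivially on $r$: unlike the curve cases of Propositions~\ref{propA} and \ref{propB}, where one merely inverts a scalar relation to locate the endpoints $s_\pm(r)$, here one must expand a two-dimensional region and integrate a nonconstant area element over it. The accompanying algebra---eliminating the redundant fourth-order jet of $\Phi$, verifying the vanishing of the $r^4$ term, and especially recombining the resulting polynomial in the jet into exactly $\cH_0^2+\tfrac32\cP_0^2+4\vec e_1(\cP_0)$ with the stated rational coefficients---is the delicate part of the argument.
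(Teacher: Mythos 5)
Your plan has the same overall architecture as the paper's proof: normalize by a left translation and a rotation \eqref{eq:Rotations}, parameterize $\Sigma$, apply the area formula \eqref{eq:area-formula} so the integrand is the norm of the degree-$3$ part of the tangent $2$-vector (your identification of this with the horizontal$\wedge$vertical piece matches \eqref{3-component}), expand the integrand and the Kor\'anyi gauge, integrate with the anisotropic scaling, kill the $r^4$ term by an odd/even symmetry, and identify the $r^5$ coefficient with the curvature combination. The one genuinely different choice you make---coordinates adapted to the Legendrian foliation, with $\partial_s\Phi=\vec{e}_1$---is precisely where the plan fails as stated. The characteristic vector field \eqref{e1-level-set} involves first derivatives of the defining function, so on a $C^4$ surface $\vec{e}_1$ is only $C^3$; its flow, and hence your foliation chart $\Phi(s,t)$, is only $C^3$ jointly in $(s,t)$. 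The ``fourth-order Taylor expansion of $\Phi$ in $(s,t)$'' that your argument rests on therefore does not exist under the hypotheses of the theorem. (One could try to rescue this by observing that, with $t$ carrying weight $2$, the weighted expansion to homogeneous degree $4$ only requires $\partial_s^4$, $\partial_s^2\partial_t$, $\partial_s\partial_t$ and $\partial_t^2$ data, and the pure $s$-derivatives are available to fourth order along the $C^4$ leaves; but making this precise, with uniform remainder control, is a nontrivial argument that your sketch does not supply, and avoiding it is exactly why one should not use foliation coordinates here.)

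The paper sidesteps the issue entirely: it parameterizes $\Sigma$ near the normalized point as a plain graph $\Phi(\eta)=\eta+\varphi(\eta)e_1$ over the $(e_2,e_3)$-plane, so that $\varphi$ retains the full $C^4$ regularity of $\Sigma$, and the characteristic direction is used only at the base point (via $\varphi_1(0)=0$). The identification with intrinsic curvatures is then done in a second, separate step by implicit differentiation of a defining function, yielding $\varphi_2(0)=-\tfrac14\cP_0$, $\varphi_{11}(0)=-\cH_0$, $\varphi_{12}(0)=-\tfrac14\vec{e}_1(\cP_0)-\tfrac18\cP_0^2$, and hence
\begin{equation*}
\varphi_{11}(0)^2-16\varphi_{12}(0)-8\varphi_2(0)^2=\cH_0^2+\tfrac32\cP_0^2+4\vec{e}_1(\cP_0).
\end{equation*}
Beyond the regularity point, your ``integral over an $r$-independent model region plus controlled corrections'' also glosses over two ingredients the paper must supply to make the integration rigorous: a monotonicity/convexity argument (Lemma \ref{rho_0}) showing that for each direction $\xi$ on the Kor\'anyi sphere the radial equation has a unique solution $\rho_0(\xi,r)$, so the pulled-back ball is genuinely described in Kor\'anyi polar coordinates; and the explicit polar-coordinate measure with its monomial integrals (Lemma \ref{polar-coordinates}, Corollary \ref{polar-coordinates-corollary}, Lemma \ref{monomial-integrals}), which is what produces the exact constants $\tfrac32$ and $4$ in the final formula.
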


\begin{corollary}\label{coroC}
Let $\Sigma$ be a surface contained in the support of a $3$-uniform measure. Then the equation
\begin{equation}\label{eq:coroC}
\cH_0^2 + \tfrac32 \cP_0^2 + 4 (\vec{e}_1)(\cP_0) = 0
\end{equation}
holds in all noncharacteristic patches of $\Sigma$.
\end{corollary}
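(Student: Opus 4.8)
The plan is to combine the local power series expansion of Theorem~\ref{theoC} with the global rigidity forced by the uniformity hypothesis. The essential point is that $3$-uniformity pins down the measure of every Kor\'anyi ball to be \emph{exactly} proportional to $r^3$, so the $r^5$ coefficient appearing in \eqref{eqnC} is forced to vanish. Since that coefficient is, up to a fixed nonzero universal factor, the left-hand side of \eqref{eq:coroC}, the corollary follows pointwise at noncharacteristic points.

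First I would invoke Theorem~\ref{zeroth2} to rewrite the uniform measure in the normalized form $\mu = c\,\cS^{3}\restrict\spt\mu$ for some constant $c>0$. Next, fix a noncharacteristic point $x$ lying in a noncharacteristic patch of $\Sigma$. By \eqref{d:degree} a noncharacteristic point of a surface has degree $3$ (for a $2$-dimensional $\Sigma$, the condition $T_x\Sigma \ne H_x\Heis$ forces $T_x\Sigma \not\subset H_x\Heis$), and by continuity a whole neighborhood of $x$ in $\Sigma$ is noncharacteristic; in particular the characteristic set does not accumulate at $x$. I would then argue that, for all sufficiently small $r$,
$$
\cS^3(B_H(x,r)\cap\spt\mu) = \cS^3(B_H(x,r)\cap\Sigma),
$$
using that the lower-dimensional strata of the analytic variety $\spt\mu$ together with the locus of points of degree less than $3$ in its top stratum are $\cS^3$-negligible (exactly as established in the proof of Theorem~\ref{zeroth2} via \cite[Corollary~1.2]{mag:blowup2step}), so that near $x$ the support carries no $\cS^3$-mass beyond the smooth patch $\Sigma$.

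Combining these observations, the uniformity condition \eqref{eq:uniform-measure-condition} for $\mu$ gives $\mu(B_H(x,r)) = c_0 r^3$ with $c_0$ the uniformity constant, whence
$$
\cS^3(B_H(x,r)\cap\Sigma) = \tfrac{c_0}{c}\, r^3
$$
for all small $r$. On the other hand, Theorem~\ref{theoC} supplies the expansion $\cS^3(B_H(x,r)\cap\Sigma) = \bc_0 r^3 + \bc_1(x) r^5 + o(r^5)$. Matching the two expressions term by term (the leading coefficients agree automatically, consistent with Lemma~\ref{zeroth1}) forces $\bc_1(x) = 0$. Since $\bc_1(x)$ is a fixed nonzero multiple of $\cH_0(x)^2 + \tfrac32\cP_0(x)^2 + 4(\vec{e}_1)_x(\cP_0)(x)$, equation \eqref{eq:coroC} holds at $x$; as $x$ was an arbitrary noncharacteristic point, the proof is complete.

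The only delicate step is the local identification of $\spt\mu$ with $\Sigma$ near noncharacteristic points, that is, justifying that the extra strata and the low-degree locus contribute nothing to the $\cS^3$-measure of small balls centered on a noncharacteristic patch. This is where I expect the main care to be needed, and it rests on the Lojasiewicz stratification together with the negligibility statements already collected in the proof of Theorem~\ref{zeroth2}; everything else reduces to a direct comparison of two convergent power series in $r$.
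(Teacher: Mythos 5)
Your core argument is exactly the one the paper intends: Corollary \ref{coroC} is stated there as an immediate consequence of Theorem \ref{theoC}, the point being precisely that $3$-uniformity, together with the identification $\mu=c\,\cS^3\restrict\spt\mu$ from Theorem \ref{zeroth2}, forces the coefficient $\bc_1(x)$ of $r^5$ in \eqref{eqnC} to vanish, and $\bc_1(x)$ is a fixed nonzero multiple of the left-hand side of \eqref{eq:coroC}. So your matching-of-coefficients scheme is the intended proof, and in fact you supply more detail than the paper does.

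The one step where your justification falls short is the step you yourself flag as delicate: the claim that $\cS^3(B_H(x,r)\cap\spt\mu)=\cS^3(B_H(x,r)\cap\Sigma)$ for all small $r$ at an \emph{arbitrary} noncharacteristic point $x$. The negligibility facts you cite from the proof of Theorem \ref{zeroth2} dispose of the lower-dimensional strata and of the low-degree locus inside the top stratum, but they do not rule out \emph{other two-dimensional sheets} of $M_{(2)}$ passing through, or accumulating at, $x$; such a sheet would contribute mass of order $r^3$, not $o(r^5)$, and would wreck the term-by-term comparison (you would only get an inequality, hence at best a sign on $\bc_1(x)$). Two standard repairs close this. (i) Any local irreducible component of the analytic variety $\spt\mu$ through $x$ other than the one containing the patch of $\Sigma$ meets $\Sigma$ in a subvariety of dimension at most one; hence the set of points of the patch at which $\spt\mu$ locally coincides with $\Sigma$ is open and dense in the patch. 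Run your argument at those points to get $\bc_1=0$ there, then conclude on the whole patch by continuity of $\cH_0^2+\tfrac32\cP_0^2+4\,\vec{e}_1(\cP_0)$ on the noncharacteristic set. (ii) Alternatively, rule out extra sheets outright by a density contradiction: at a point lying on two transversally intersecting two-dimensional sheets, the density of $\mu$ would be $2c\,\omega_H$ (the overlap being at most one-dimensional, hence $\cS^3$-null), whereas at generic points of $\Sigma$ it is $c\,\omega_H$; uniformity forbids two distinct values. With either patch inserted, your proof is complete and coincides with the paper's.
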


We recall that $\cH_0$ denotes the horizontal mean curvature of $\Sigma$, while $\cP_0$ denotes the imaginary curvature of Arcozzi--Ferrari (curvature of the metric normal). See section \ref{sec:background} for the definitions. The leading coefficient $\bc_0$ in \eqref{eqnC} agrees with the quantity $\omega_H$ in \eqref{eq:omegaH}, which is the area of the unit Kor\'anyi ball in a vertical plane.

As in previous sections, we will first prove Theorem \ref{thmC} under the assumption that Theorem \ref{theoC} holds, after which we prove Theorem \ref{theoC}.
Before beginning the proof of Theorem \ref{thmC}, we pause to answer a question which may have occurred to the reader, namely, why we impose the hypothesis that all points in the support be non-characteristic. Recall that in the corresponding proof in the $2$-uniform case, Lemma \ref{lemP} ruled out the existence of horizontal points on the supports of $2$-uniform measures, by showing that the local density coefficient at horizontal points differs from that at non-horizontal points. We will see that the corresponding statement for characteristic vs.\ non-characteristic points on surfaces is not true. A counterexample can be found already within the class of circular paraboloids. We also study the validity of our PDE in the noncharacteristic region of such paraboloids.

\begin{example}\label{ex:paraboloids}
Let us consider the surface $\Sigma$ parameterized over the plane as follows:
$$
\Phi(\eta) = (\eta,\tan\alpha\,|\eta|^2), \qquad \eta=(\eta_1,\eta_2) \in \R^2.
$$
Following the model in the $2$-uniform case, we compute
\begin{equation}\label{1a}
\frac{\mu(B_H(0,r))}{r^3} = \frac1{r^3} \int_{\Phi^{-1}(B_H(0,r) \cap \Sigma)} |(\Phi_1 \wedge \Phi_2)_{(3)}| \, d\eta.
\end{equation}
Here, as before, $\Phi_j$ denotes the partial derivative of $\Phi$ with respect to $\eta_j$. Since the paraboloid $\Sigma$ is invariant under dilation, the preceding ratio is constant in $r$ and we need only compare its value to the corresponding value for a vertical plane, namely, the quantity $\omega_H$ described in Lemma \ref{zeroth1} and whose value is given in \eqref{eq:omegaH}.

The change of variables $\eta = r \xi$ transforms the right hand side of \eqref{1a} into
$$
\frac1r \int_{r^{-1} \Phi^{-1}(B_H(0,r) \cap \Sigma)} |(\Phi_1(r\xi)\wedge\Phi_2(r\xi))_{(3)}| \, d\xi.
$$
Note that $\xi \in r^{-1} \Phi^{-1}(B_H(0,r))$ if and only if $\Phi(r\xi) \in B_H(0,r)$, which in turn occurs if and only if
\begin{equation*}
|\xi| < \sqrt{\cos\alpha}\,.
\end{equation*}
Furthermore, $\Phi_1(r\xi) = X + 2r(\tan\alpha\,\xi_1-\xi_2)T$ and $\Phi_2(r\xi) = Y + 2r(\tan\alpha\,\xi_2+\xi_1)T$, and so
$$
(\Phi_1(r\xi)\wedge \Phi_2(r\xi))_{(3)} = 2r(\tan\alpha\,\xi_1-\xi_2)(X\wedge T) - 2r(\tan\alpha\,\xi_1-\xi_2)(Y\wedge T).
$$
We conclude that
\begin{equation*}\begin{split}
\frac{\mu(B_H(0,r))}{r^3} &= \int_{\{\xi:|\xi|<\sqrt{\cos\alpha}\}} 2 \sqrt{(\tan\alpha\,\xi_2+\xi_1)^2+(\tan\alpha\xi_1-\xi_2)^2} \, d\xi \\
&= 2 \sec\alpha \, \int_{\{\xi:|\xi|<\sqrt{\cos\alpha}\}} |\xi| \, d\xi \\
&=(2\sec\alpha)(2\pi)(\int_0^{\sqrt{\cos\alpha}} \rho^2\,d\rho) =\frac{4\pi}{3} \sqrt{\cos\alpha} \, .
\end{split}\end{equation*}
However, since $\omega_H < \tfrac{4\pi}{3}$ there exists a choice for $\alpha$ such that
\begin{equation}\label{eq:for-alpha-1}
\frac{4\pi}{3} \sqrt{\cos\alpha} = \omega_H.
\end{equation}
For this particular paraboloid, we are not able to distinguish characteristic and non-character\-istic points via the local density behavior of the measure.

We now show that the PDE \eqref{eq:coroC} is satisfied for a specific choice of $\alpha$, which is not the same as the value for which \eqref{eq:for-alpha-1} holds. The authors would like to thank Andrea Merlo for pointing out this fact. Let $u(x_1,x_2,x_3) = (\tan\alpha)(x_1^2+x_2^2)-x_3$ be a defining function for $\Sigma_\alpha$. Then $X_1u = 2(\tan\alpha)x_1-2x_2$ and $X_2u = 2(\tan\alpha)x_2+2x_1$, and so
$$
|\nabla_0u| = 2(\sec\alpha)\sqrt{x_1^2+x_2^2}.
$$
The unit horizontal normal is
$$
\vec{n}_0 = (\sin\alpha \, \cI + \cos\alpha \, \cJ)\left( \frac{x_1X_1+x_2X_2}{\sqrt{x_1^2+x_2^2}} \right)
$$
where $\cI$ denotes the fiberwise identity operator acting on the horizontal distribution. The characteristic vector field is
$$
\vec{e}_1 = \cJ(\vec{n}_0) = (-\cos\alpha \, \cI + \sin\alpha \, \cJ)\left( \frac{x_1X_1+x_2X_2}{\sqrt{x_1^2+x_2^2}} \right) \,.
$$
The horizontal mean curvature is
$$
\cH_0 = X_1(\frac{X_1u}{|\nabla_0u|}) + X_2(\frac{X_2u}{|\nabla_0u|}) = \frac{\sin\alpha}{\sqrt{x_1^2+x_2^2}},
$$
while the imaginary curvature is
$$
\cP_0 = \frac{4X_3u}{|\nabla_0u|} = \frac{-2\cos\alpha}{\sqrt{x_1^2+x_2^2}} \,.
$$
We compute
$$
\vec{e}_1(\cP_0) = (-\cos\alpha \, \cI + \sin\alpha \, \cJ)\left( \frac{x_1X_1+x_2X_2}{\sqrt{x_1^2+x_2^2}} \right) \left( \frac{-2\cos\alpha}{\sqrt{x_1^2+x_2^2}} \right) = \frac{-2\cos^2\alpha}{x_1^2+x_2^2} \,.
$$
Finally,
$$
\cH_0^2 + \frac32 \cP_0^2 + 4 \vec{e}_1(\cP_0) = \frac{\sin^2\alpha}{x_1^2+x_2^2} + \frac{6\cos^2\alpha}{x_1^2+x_2^2} - \frac{8\cos^2\alpha}{x_1^2+x_2^2} = \frac{\sin^2\alpha - 2 \cos^2\alpha}{x_1^2+x_2^2}
$$
is identically equal to zero if $\tan^2\alpha = 2$ and never equal to zero for any other $\alpha$.

To conclude this example, we show that the value of $\alpha$ for which the paraboloid $\Sigma_\alpha$ satisfies \eqref{eq:coroC} in the noncharacteristic patch is not the same value of $\alpha$ for which the local density coefficient at the characteristic point agrees with $\omega_H$. From these observations, we deduce that no paraboloid $\Sigma_\alpha$ can be the support of a $3$-uniform measure. This is a partial step towards the proof of Proposition \ref{prop:no-quadratic-t-graphs-as-3-uniform-supports}; the full proof is deferred to the end of this section.

Our PDE is satisfied iff $\tan^2\alpha = 2$ or
$$
\cos\alpha = 3^{-1/2}.
$$
However, according to \ref{eq:for-alpha-1}, the local density coefficient at the characteristic point is equal to $\omega_H$ if and only if
$$
\cos\alpha = \left( \frac{3\omega_H}{4\pi} \right)^2 =\frac1{8\pi^3} \Gamma(\tfrac14)^4.
$$
But $3^{-1/2} \approx 0.57735 \dots$ while $\tfrac1{8\pi^3}\Gamma(\tfrac14)^4 \approx 0.696602 \dots$.
\end{example}

We now turn to the proof of Theorem \ref{thmC}.

\begin{proof}[Proof of Theorem \ref{thmC}]
Let $\mu$ be a $3$-uniform measure on $\Heis$. By \cite{ct:marstrand}, $M = \spt\mu$ is a real analytic variety of topological dimension two and Hausdorff dimension three. The top dimensional stratum $M_{(2)}$ is a countable union of real analytic surfaces. We assume that the lower-dimensional strata of $M$ are empty, and that $M = M_{(2)}$ is a single real-analytic surface which is vertically ruled. In this case, every point on $M$ is noncharacteristic. Since vertically ruled surfaces are characterized by the equation $\cP_0=0$, equation \eqref{PDE-C} in this setting reduces to
$$
\cH_0 = 0.
$$
Thus $M$ is an H-minimal surface. Moreover, the stability criterion of \cite[Theorem 3.2]{dgnp:bernstein2} is verified. (Note that the quantity $\bar\omega$ in that theorem is proportional to $\cP_0$, and hence both $\bar\omega$ and $\cA$ vanish.) In view of \cite[Theorem A]{dgnp:bernstein2} (see also \cite{dgnp:bernstein1} and \cite{hrr:bernstein}), $M$ must be contained in a vertical plane. The $3$-uniformity of the measure $\mu$ ensures that there can be no boundary components, hence $M$ is a single affine vertical plane, and $\mu$ is proportional to the restriction of $\cS^3$ to this plane.
\end{proof}

The remainder of this section is devoted to the proof of Theorem~\ref{theoC}. We proceed in two steps. In the first step, we assume that $x=o$ and $HT_0\Sigma$ is the $e_2$-axis and derive \eqref{eqnC}. In the second step we remove the assumptions on $x$ and $HT_x\Sigma$, obtaining the stated conclusion in the theorem.

Before embarking on the proof, we introduce some additional notation and state a polar coordinate formula for a Kor\'anyi-type norm in $\R^2$.
We define
$$
\nu_K(\eta) := (\eta_1^4+\eta_2^2)^{1/4}, \qquad \qquad \eta=(\eta_1,\eta_2).
$$
Denote by $\B_K^1 = \{ \eta \in  \R^2 : \nu_K(\eta) \le 1\}$ the unit ball for this
norm, and by $\Sph_K^1 = \partial \B_K^1$ the boundary of $\B_K^1$.
Introduce anisotropic dilations $\{\delta_r\,:\,r>0\}$ on $\R^2$ by
\begin{equation}\label{N-dilations}
\delta_r(\eta_1,\eta_2)=(r\eta_1,r^2\eta_2).
\end{equation}
Relative to such dilations, we state the following polar coordinate decomposition.

\begin{lemma}\label{polar-coordinates}
There exists a Radon measure $\sigma$ on $\Sph_K^1$ such that
$$
\int_\Heis h(\eta)\,d\eta = \int_{\Sph^1_K} \, \int_0^\infty
h(\delta_t\xi) t^2 \, dt \, d\sigma(\xi)
$$
for each integrable function $h$ on $\R^2$.
\end{lemma}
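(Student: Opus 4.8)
The plan is to recognize this as the standard polar-coordinate decomposition attached to the one-parameter dilation group $\{\delta_t\}$, whose homogeneous dimension is $Q=1+2=3$; both the weight $t^2 = t^{Q-1}$ and the measure $\sigma$ will encode this exponent. First I would record the three structural facts that drive everything. The dilations form a multiplicative group, $\delta_s\circ\delta_t = \delta_{st}$, with Jacobian determinant $\det\delta_t = t^{3}$. The quasi-norm is $\delta$-homogeneous of degree one, $\nu_K(\delta_t\eta) = t\,\nu_K(\eta)$, and vanishes only at the origin since $\eta_1^4+\eta_2^2=0$ forces $\eta=0$. Consequently the polar map $\Psi\colon(0,\infty)\times\Sph^1_K\to\R^2\setminus\{0\}$, $\Psi(t,\xi)=\delta_t\xi$, is a homeomorphism with inverse $\eta\mapsto\big(\nu_K(\eta),\,\delta_{\nu_K(\eta)^{-1}}\eta\big)$; uniqueness of the decomposition follows by applying $\nu_K$ to both sides of $\delta_{t_1}\xi_1=\delta_{t_2}\xi_2$. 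Finally, $\Sph^1_K$ is compact, so any Radon measure on it is automatically finite.

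Next I would define the candidate measure $\sigma$ directly as a positive linear functional. For $g\in C(\Sph^1_K)$, let $\tilde g(\eta):=g\big(\delta_{\nu_K(\eta)^{-1}}\eta\big)$ denote its $\delta$-homogeneous degree-zero extension to $\R^2\setminus\{0\}$, and set
$$
\int_{\Sph^1_K} g\,d\sigma := 3\int_{\B^1_K} \tilde g(\eta)\,d\eta.
$$
This is a positive linear functional on $C(\Sph^1_K)$, bounded because $\B^1_K$ has finite Lebesgue measure, so by the Riesz representation theorem it is represented by a finite Radon measure $\sigma$ on $\Sph^1_K$.

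The heart of the argument is a single scaling computation. For fixed $g\ge 0$, consider the distribution function $F(\lambda):=\int_{\{\nu_K\le\lambda\}} \tilde g(\eta)\,d\eta$. The substitution $\eta=\delta_\lambda\zeta$, under which $d\eta = \lambda^{3}\,d\zeta$, $\nu_K(\eta)=\lambda\,\nu_K(\zeta)$, and $\tilde g(\eta)=\tilde g(\zeta)$ by degree-zero homogeneity, yields
$$
F(\lambda) = \lambda^{3}\int_{\B^1_K}\tilde g\,d\zeta = \tfrac13\,\lambda^{3}\int_{\Sph^1_K}g\,d\sigma,
$$
so that $dF(\lambda) = \lambda^{2}\big(\int_{\Sph^1_K}g\,d\sigma\big)\,d\lambda$. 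I would then test the claimed identity on product functions $h(\eta)=\phi(\nu_K(\eta))\,\tilde g(\eta)$ with $\phi\in C_c(0,\infty)$ and $g\in C(\Sph^1_K)$. The left-hand side is $\int_0^\infty\phi\,dF = \big(\int_0^\infty\phi(\lambda)\lambda^{2}\,d\lambda\big)\big(\int_{\Sph^1_K}g\,d\sigma\big)$. On the right-hand side, $\nu_K(\delta_t\xi)=t$ and $\delta_{\nu_K(\delta_t\xi)^{-1}}\delta_t\xi=\xi$ for $\xi\in\Sph^1_K$, hence $h(\delta_t\xi)=\phi(t)g(\xi)$, and the right-hand side evaluates to the identical product. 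Thus the two sides agree for every such $h$.

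Finally I would upgrade from product functions to arbitrary $h\in L^1(\R^2)$. Linear combinations of the products $(\phi\circ\nu_K)\,\tilde g$ form a subalgebra of $C_0(\R^2\setminus\{0\})$ that separates points and vanishes nowhere, so by the Stone--Weierstrass theorem they are uniformly dense; since both sides of the identity define Radon measures on $\R^2\setminus\{0\}$—Lebesgue measure on the left, the $\Psi$-pushforward of $t^{2}\,dt\,d\sigma$ on the right—agreement on a uniformly dense subalgebra forces the measures to coincide. The origin is Lebesgue-null and so contributes nothing, giving the identity for all integrable $h$. I expect the only genuinely delicate point to be this last density/monotone-class step, namely verifying that equality on the generating family of product functions propagates to all of $L^1$; the scaling identity for $F(\lambda)$ is the conceptual core and is essentially forced by the homogeneities $\det\delta_t = t^{3}$ and $\nu_K(\delta_t\eta)=t\,\nu_K(\eta)$.
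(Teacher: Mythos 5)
Your proposal is correct in substance, but it proves the lemma by a genuinely different route than the paper. You give the abstract, structural argument: define $\sigma$ via the Riesz representation theorem acting on degree-zero homogeneous extensions, exploit the two homogeneities $\det\delta_t=t^3$ and $\nu_K(\delta_t\eta)=t\,\nu_K(\eta)$ through the distribution function $F(\lambda)=\tfrac13\lambda^3\int g\,d\sigma$, verify the identity on product functions $\phi(\nu_K)\tilde g$, and then upgrade by density. This is essentially the Folland--Stein argument (the paper notes the result is \cite[Proposition 1.15]{fs:hardy}), and it buys generality: it works verbatim for any homogeneous quasi-norm and any dilation group, with no smoothness of the sphere and no computation. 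The paper instead runs the Euclidean coarea formula for $\nu_K$, changes variables on each level set via $\delta_t$, and explicitly computes both $|\nabla\nu_K|$ and the Radon--Nikodym derivative of $(\delta_{1/t})_\#\cH^1$ with respect to $\cH^1$ on $\Sph^1_K$; the $t$-dependent factors cancel to leave the weight $t^2$ \emph{and} the explicit representation $d\sigma(\eta)=(\eta_1^6+\tfrac14\eta_2^2)^{-1/2}\,d\cH^1(\eta)$. That explicitness is what the paper's approach buys --- the authors state they want a concrete formula for $\sigma$ --- although in the end the downstream computations (Corollary \ref{polar-coordinates-corollary} and Lemma \ref{monomial-integrals}) only use the abstract decomposition plus ball integrals, so your version would in fact suffice for the applications. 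One caveat on your final step: sup-norm density of the product subalgebra in $C_0(\R^2\setminus\{0\})$ does not by itself force equality of the two measures, since Lebesgue measure is infinite and integration against it is not a bounded functional on $C_0$; you need either the $\pi$-$\lambda$/monotone-class argument you allude to, or the quick fix of multiplying approximants by a fixed radial cutoff $\psi(\nu_K)$ (itself an algebra element) so that all supports lie in a fixed compact annulus where both measures are finite. You flagged this as the delicate point, and it is standard, so it is a repairable technicality rather than a gap in the approach.
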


This result appears as \cite[Proposition 1.15]{fs:hardy}. For the purposes of this paper we need an explicit representation of the measure $\sigma$.
In section \ref{sec:koranyi-integrals} we provide an explicit
formula for $\sigma$.

\begin{proof}[Proof of Theorem~\ref{theoC}]
We first assume that $x=o\in\Sigma$ is the noncharacteristic point, and that $v_\Sigma$ is a defining function for $\Sigma$ around $o$
such that $X_2v_\Sigma(o)=0$. Clearly, since $o$ is noncharacteristic $X_1v_\Sigma(o)$ is not vanishing and we may select $v_\Sigma$ so that $X_1v_\Sigma(o)>0$.

By the implicit function theorem, setting
$N:=\{\eta_1e_2+\eta_2e_3:\eta_1,\eta_2\in\R\}$
and denoting $\eta=\eta_1e_2+\eta_2e_3$, we may parameterize a neighborhood of $o$ in $\Sigma$ as follows:
\begin{equation}\label{Phi}
\Phi:N \supset \Omega \to \Sigma \subset \Heis,\quad \Phi(\eta) = \eta + \varphi(\eta)e_1,
\end{equation}
where $\varphi:\Omega\to\R$ satisfies $\varphi(0)=0$ and
$\varphi_1(0)=0$ (since $e_2 \in T_o\Sigma$).
To ease the notation, here and in what follows
we write
\[
\varphi_1 = \frac{\partial\varphi}{\partial\eta_1},\quad
\varphi_{11} = \frac{\partial^2\varphi}{\partial\eta_1^2},\quad\text{ etc.}
\]
From \eqref{Phi} we obtain the following expressions for the partial
derivatives of $\Phi$:
$$
\Phi_1 = e_2 + \varphi_1\,e_1 \qquad \Phi_2 = e_3 + \varphi_2\,e_1.
$$
We write these expressions in the intrinsic vector fields
$X_1,X_2,X_3$ by using the identities $e_1=X_1-2x_2X_3$,
$e_2=X_2+2x_1X_3$, $e_3=X_3$. Along $\Phi(\Omega)\subset\Sigma$ we
have $x_1=\varphi(\eta)=\varphi$ and $x_2=\eta_1$. Hence
$$
\Phi_1 = \varphi_1 X_1 + X_2 + 2(\varphi-\eta_1\varphi_1)X_3
$$
and
$$
\Phi_2 = \varphi_2 X_1 + (1-2\eta_1\varphi_2) X_3,
$$
so
\begin{equation}\label{3-component}
(\Phi_1\wedge\Phi_2)_{(3)} = (\varphi_1-2\varphi\varphi_2) X_1\wedge
X_3 + (1-2\eta_1\varphi_2) X_2\wedge X_3.
\end{equation}
By the area formula \eqref{eq:area-formula},
$$
\cS^3(B_H(o,r)\cap\Sigma) = \int_{\Phi^{-1}(B_H(o,r))} |(\Phi_1\wedge\Phi_2)_{(3)}| \, d\eta
$$
and \eqref{3-component} implies that
\begin{equation}\label{to-do}
\cS^3(B_H(o,r)\cap\Sigma) = \int_{\Phi^{-1}(B_H(o,r))}
\sqrt{(\varphi_1-2\varphi\varphi_2)^2+(1-2\eta_1\varphi_2)^2} \, d\eta_1\,d\eta_2
\end{equation}
where
\begin{equation*}\begin{split}
\Phi^{-1}(B_H(o,r))
&= \{ \eta \in N \, : \, \eta + \varphi(\eta)e_1 \in B_H(o,r) \} \\
&= \{ \eta=(\eta_1,\eta_2) \, : \, (\varphi(\eta)^2+\eta_1^2)^2 + \eta_2^2 \le r^4 \}.
\end{split}\end{equation*}
Writing $\eta=\delta_\rho\xi$ as in \eqref{N-dilations} we observe that
$\Phi^{-1}(B_H(o,r))$ can be identified with the set of points
$(\rho,\xi) \in [0,r]\times\Sph_K^1$ such that
$$
(\varphi(\delta_\rho\xi)^2+\rho^2\xi_1^2)^2 + (\rho^2\xi_2)^2 \le r^4.
$$
Since $\xi\in\Sph_K^1$ this inequality is equivalent to the following
$$
\varphi(\delta_\rho\xi)^4 + 2\rho^2\xi_1^2\varphi(\delta_\rho\xi)^2 + \rho^4
\le r^4.
$$

\begin{lemma}\label{rho_0}
There exists $r>0$ sufficiently small such that for each $\xi \in \Sph^1_K$
the equation
\begin{equation}\label{rho_0-equation}
\varphi(\delta_\rho\xi)^4 + 2\rho^2\xi_1^2\varphi(\delta_\rho\xi)^2 + \rho^4
= r^4
\end{equation}
has a unique solution $\rho_0 = \rho_0(\xi,r) \le r$.
\end{lemma}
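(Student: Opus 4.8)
The plan is to exhibit the left-hand side of \eqref{rho_0-equation} as a strictly increasing function of $\rho$ near $\rho=0$ whose leading behavior is governed by $\rho^4$, and then to invoke the intermediate value theorem. Throughout, $\xi=(\xi_1,\xi_2)\in\Sph^1_K$ is fixed, but every estimate is claimed uniformly in $\xi$; this uniformity is harmless because $\Sph^1_K$ is compact and $\varphi$ is $C^4$ near the origin, so there is $\rho_1>0$ with $\delta_\rho\xi\in\Omega$ for all $\rho\le\rho_1$ and all $\xi\in\Sph^1_K$. I would write $\psi(\rho):=\varphi(\delta_\rho\xi)=\varphi(\rho\xi_1,\rho^2\xi_2)$ and
$$
F(\rho):=\psi(\rho)^4+2\rho^2\xi_1^2\,\psi(\rho)^2+\rho^4 ,
$$
so that \eqref{rho_0-equation} reads $F(\rho)=r^4$.

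Two elementary observations drive the argument. First, every term of $F$ is nonnegative, so
$$
F(\rho)\ge\rho^4\qquad\text{for all }\rho\ge 0 .
$$
Hence any solution of $F(\rho)=r^4$ automatically satisfies $\rho\le r$, which already delivers the bound $\rho_0\le r$ demanded in the statement. Second, the normalization $\varphi(0)=0$ and $\varphi_1(0)=0$ — the latter encoding that $HT_o\Sigma$ is the $e_2$-axis — together with the anisotropy of $\delta_\rho$ forces $\psi$ to vanish to second order: since $\varphi_1(\delta_\rho\xi)=O(\rho)$ and $\varphi_2$ is bounded near $o$, a Taylor expansion yields $\psi(\rho)=O(\rho^2)$ and $\psi'(\rho)=\xi_1\varphi_1(\delta_\rho\xi)+2\rho\xi_2\varphi_2(\delta_\rho\xi)=O(\rho)$, both uniformly in $\xi$.

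Next I would differentiate $F$ and insert these estimates. One finds
$$
F'(\rho)=4\psi^3\psi'+4\rho\,\xi_1^2\,\psi^2+4\rho^2\,\xi_1^2\,\psi\,\psi'+4\rho^3 ,
$$
and the bounds $\psi=O(\rho^2)$, $\psi'=O(\rho)$ show the first three terms to be $O(\rho^7)$, $O(\rho^5)$ and $O(\rho^5)$, so that
$$
F'(\rho)=4\rho^3\bigl(1+O(\rho^2)\bigr)
$$
uniformly in $\xi\in\Sph^1_K$. Consequently there is $r_0\in(0,\rho_1]$, independent of $\xi$, with $F'(\rho)>0$ for $0<\rho\le r_0$; thus $F$ is strictly increasing on $[0,r_0]$, with $F(0)=0$.

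To conclude, I would fix $r\le r_0$. Any solution $\rho$ of $F(\rho)=r^4$ obeys $\rho\le r\le r_0$ by the first observation, so all solutions lie in the interval $[0,r_0]$ where $F$ is strictly increasing, hence injective; this gives uniqueness. For existence, $F(0)=0<r^4$ while $F(r_0)\ge r_0^4\ge r^4$, so the intermediate value theorem produces a root $\rho_0\in(0,r_0]$, which satisfies $\rho_0\le r$ as already noted. The only delicate point I anticipate is securing the uniformity of the error terms over the compact sphere $\Sph^1_K$; the conceptual key, however, is that the hypothesis $\varphi_1(0)=0$ makes $\rho^4$ the genuine leading term of $F$ (otherwise $\psi=O(\rho)$ and the cross term $2\rho^2\xi_1^2\psi^2$ would compete with $\rho^4$, potentially destroying monotonicity). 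Once this normalization is exploited, the strict monotonicity, and with it the entire argument, follows without further subtlety.
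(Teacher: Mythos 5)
Your proof is correct, and its skeleton matches the paper's: both arguments get the bound $\rho_0 \le r$ for free from the nonnegativity of the first two terms of $F$, and both reduce existence and uniqueness to the strict monotonicity of $F(\cdot,\xi)$ on a small interval, uniformly over the compact sphere $\Sph^1_K$, followed by the intermediate value theorem. Where you genuinely diverge is in how that monotonicity is established. The paper proceeds by a convexity argument at fourth order: a symbolic computation (done in \textsc{Mathematica}) shows that $F$ and its first three $\rho$-derivatives vanish at $\rho=0$ while $\partial^4 F/\partial\rho^4(0,\xi)>0$ for every $\xi\in\Sph^1_K$; continuity and compactness then give $\partial^4 F/\partial\rho^4 \ge c_0>0$ on $[0,\rho_1]\times\Sph^1_K$, whence $F(\cdot,\xi)$ is strictly increasing with image of size bounded away from zero, which is what the paper invokes for existence. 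You instead differentiate $F$ once and feed in the elementary Taylor bounds $\psi=O(\rho^2)$ and $\psi'=O(\rho)$ --- which is exactly where the normalization $\varphi(0)=\varphi_1(0)=0$ enters --- to get $F'(\rho)=4\rho^3\bigl(1+O(\rho^2)\bigr)>0$ uniformly in $\xi$, and for existence you sidestep the ``image size'' estimate entirely via the trivial bound $F(r_0)\ge r_0^4\ge r^4$. Your route is more elementary and self-contained: it needs no computer algebra, and it makes transparent why the hypothesis $\varphi_1(0)=0$ matters (without it $\psi=O(\rho)$ only, and the cross term $2\rho^2\xi_1^2\psi^2$ would compete with $\rho^4$ and could destroy monotonicity). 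What the paper's higher-order formulation buys is compactness of statement --- all the structure is packed into one sign condition on $\partial^4 F/\partial\rho^4$ --- but at the price of hiding the actual computation in software; your first-derivative estimate carries the same content by hand.
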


\begin{proof}
The claim $\rho_0 \le r$ is clear from \eqref{rho_0-equation} since
the terms $\varphi^4$ and $2\rho^2\xi_1^2\varphi^2$ are nonnegative.
To show the existence and uniqueness of $r>0$ we let
$$
F(\rho,\xi) := \varphi(\delta_\rho\xi)^4 +
2\rho^2\xi_1^2\varphi(\delta_\rho\xi)^2 + \rho^4.
$$
An extensive computation (done via {\sc Mathematica}) reveals that
\[
F(0,\xi)=\frac{\partial F}{\partial \rho}(0,\xi)=\frac{\partial^2 F}{\partial \rho^2}(0,\xi)=\frac{\partial^3 F}{\partial \rho^3}(0,\xi)=0\quad\text{ and }\quad \frac{\partial^4 F}{\partial \rho^4}(0,\xi)>0
\]
for all $\xi\in\Sph_K^1$.
This implies that $\frac{\partial^4 F}{\partial \rho^4}(\rho,\xi)\ge c_0>0$
for any $(\rho,\xi)\in[0,\rho_1]\times\Sph_K^1$.
We have proved that $F(\cdot,\xi)$ is strictly increasing on
$[0,\rho_1]$ for all $\xi\in\Sph^1_K$ with size of the image
bounded away from zero. This immediately gives our claim.
\end{proof}

We now expand $\varphi$ by homogeneous polynomials. Let us write
\begin{equation}\label{phi-power-series}
\varphi(\eta) = A_1 + A_2 + A_3 + A_4 + O(|\eta|^5)
\end{equation}
where $A_1 = a_2\eta_2$,
$$
A_2 = a_{11}\eta_1^2 + a_{12}\eta_1\eta_2 + a_{22}\eta_2^2
$$
and
$$
A_3 = a_{111}\eta_1^3 + a_{112}\eta_1^2\eta_2 + a_{122}\eta_1\eta_2^2
+ a_{222}\eta_2^3.
$$
First we compute $\varphi(\delta_\rho\xi)$ and group the resulting
terms according to powers of $\rho$:
$$
\varphi(\delta_\rho\xi) = B_2\rho^2 + B_3\rho^3 + B_4\rho^4 + O(\rho^5),
$$
where
$$
B_2 = a_2\xi_2 + a_{11}\xi_1^2, \qquad \mbox{and} \qquad
B_3 = a_{12}\xi_1\xi_2 + a_{111}\xi_1^3.
$$
Hence
$$
r^4 = \rho_0^4 + 2\rho_0^2\xi_1^2\varphi(\delta_{\rho_0}\xi)^2 +
\varphi(\delta_{\rho_0}\xi)^4
= \rho_0^4 + 2\xi_1^2{B_2}^2 \rho_0^6 + 4\xi_1^2B_2B_3 \rho_0^7 +
O(\rho_0^8).
$$
Taking the inverse gives the following series expansion for $\rho_0$
in terms of $r$:
\begin{equation}\label{rho_0-power-series}
\rho_0(\xi,r) = r - \frac12\xi_1^2{B_2}^2r^3 - \xi_1^2B_2B_3r^4 + O(r^5).
\end{equation}
We now return to \eqref{phi-power-series} and compute a power series
for the integrand in \eqref{to-do}, that is
\[
\sqrt{(\varphi_1-2\varphi\varphi_2)^2+(1-2\eta_1\varphi_2)^2}.
\]
The Taylor expansion of this integrand is as follows
\[
\sqrt{1-4a_2\eta_1+(4a_{11}^2+4a_2^2-4a_{12})\eta_1^2+(4a_{11}(a_{12}-2a_2^2)-8a_{22})\eta_1\eta_2+(a_{12}-2a_2^2)\eta_2^2
  + O(|\eta|^3)}.
\]
Letting $\eta_1=\rho\xi_1$ and $\eta_2=\rho^2\xi_2$ and expanding as a
power series in $\rho$ gives
\begin{equation}\label{integrand-power-series}
\sqrt{(\varphi_1-2\varphi\varphi_2)^2+(1-2\eta_1\varphi_2)^2}(\delta_\rho\xi)
= 1 - 2a_2\xi_1\rho + 2(a_{11}^2-a_{12})\xi_1^2\rho^2 + O(\rho^3).
\end{equation}
We are now ready to compute the integral in \eqref{to-do}. First we
convert to Kor\'anyi polar coordinates:
$$
\cS^3(B_H(o,r)\cap\Sigma) = \int_{\Sph^1_K} \int_0^{\rho_0(\xi,r)}
\sqrt{(\varphi_1-2\varphi\varphi_2)^2+(1-2\eta_1\varphi_2)^2} \,
\rho^2 \, d\rho\, d\sigma(\xi).
$$
Next we insert the power series from \eqref{integrand-power-series}
and integrate term-by-term in $\rho$:
\begin{equation*}\begin{split}
\cS^3(B_H(o,r)\cap\Sigma) &= \int_{\Sph^1_K} \int_0^{\rho_0(\xi,r)}
\biggl(\rho^2 - 2a_2\xi_1\rho^3 + 2(a_{11}^2-a_{12})\xi_1^2\rho^4 +
O(\rho^5)\biggr) \, d\rho\, d\sigma(\xi) \\
&= \int_{\Sph^1_K} \biggl( \frac13 \rho_0^3 - \frac12a_2\xi_1\rho_0^4
+ \frac25(a_{11}^2-a_{12})\xi_1^2\rho_0^5 + O(\rho^6)\biggr) \,
d\sigma(\xi).
\end{split}\end{equation*}
Finally we insert the power series from \eqref{rho_0-power-series}.
The result is
\[
\cS^3(B_H(o,r)\cap\Sigma) =
\int_{\Sph^1_K} \biggl( \frac13 r^3 - \frac12a_2\xi_1 r^4
+ \left( \frac25(a_{11}^2-a_{12}) - \frac12 B_2^2 \right) \xi_1^2 r^5
+ O(r^6) \biggr) \, d\sigma(\xi).
\]
This integral can be written as follows
\[
\left( \frac13 \int_{\Sph^1_K} d\sigma \right)r^3
- \frac12 a_2 \left( \int_{\Sph^1_K} \xi_1 \, d\sigma \right) r^4
+ \left( \int_{\Sph^1_K} \left( \frac25(a_{11}^2-a_{12}) - \frac12
    B_2^2 \right) \xi_1^2 \, d\sigma \right) r^5 + O(r^6).
\]
The integrals over $\Sph^1_K$ can be computed using Corollary
\ref{polar-coordinates-corollary} and Lemma \ref{monomial-integrals}.
First,
\[
\frac13 \int_{\Sph^1_K} d\sigma = \Vol(\B^1_K) = \omega_H.
\]
The value of $\omega_H$ is computed in \eqref{eq:omegaH}. Next, we observe that
\[
\int_{\Sph^1_K} \xi_1 \, d\sigma = 0
\]
by symmetry. For the third term we compute
\begin{equation*}\begin{split}
\int_{\Sph^1_K} &\left( \frac25(a_{11}^2-a_{12}) - \frac12 B_2^2
\right) \xi_1^2 \, d\sigma
\\
&= \frac25(a_{11}^2-a_{12}) \int_{\Sph^1_K} \xi_1^2 \, d\sigma(\xi)
- \frac12 a_2^2 \int_{\Sph^1_K} \xi_1^2 \xi_2^2 \, d\sigma(\xi)
- a_{11}a_2 \int_{\Sph^1_K} \xi_1^4 \xi_2 \, d\sigma(\xi) \\
& \qquad\qquad - \frac12 a_{11}^2 \int_{\Sph^1_K} \xi_1^6 \, d\sigma(\xi).
\end{split}\end{equation*}
Again, the $\xi_1^4\xi_2$ integral vanishes by symmetry
considerations, so in turn
\begin{equation*}\begin{split}
\int_{\Sph^1_K} &\left( \frac25(a_{11}^2-a_{12}) - \frac12 B_2^2
\right) \xi_1^2 \, d\sigma \\
&=\frac25(a_{11}^2-a_{12}) \int_{\Sph^1_K} \xi_1^2 \, d\sigma(\xi)
- \frac12 a_2^2 \int_{\Sph^1_K} \xi_1^2 \xi_2^2 \, d\sigma(\xi)
- \frac12 a_{11}^2 \int_{\Sph^1_K} \xi_1^6 \, d\sigma(\xi) \\
& = 2(a_{11}^2-a_{12}) \int_{\B^1_K} \eta_1^2 \, d\eta
- \frac92 a_2^2 \int_{\B^1_K} \eta_1^2 \eta_2^2 \, d\eta
- \frac92 a_{11}^2 \int_{\B^1_K} \eta_1^6 \, d\eta \\
& = 2(a_{11}^2-a_{12}) \left( \frac85 \frac1{\sqrt{2\pi}}
  \Gamma(\tfrac34)^2 \right)
- \frac92 a_2^2 \left( \frac{16}{45} \frac1{\sqrt{2\pi}}
  \Gamma(\tfrac34)^2 \right)
- \frac92 a_{11}^2 \left( \frac8{15} \frac1{\sqrt{2\pi}}
  \Gamma(\tfrac34)^2 \right) \\
& = \beta_1 \left( \varphi_{11}(0)^2 - 16 \varphi_{12}(0) -
  8 \varphi_2(0)^2 \right)
\end{split}\end{equation*}
as stated in the theorem, where we recall that $a_2 = \varphi_2(0)$,
$a_{12} = \varphi_{12}(0)$, and $a_{11} = \tfrac12 \varphi_{11}(0)$.

To complete the proof we need to remove the normalizing assumptions
\[
x=o \quad\text{and}\quad HT_x\Sigma=\spa\{e_2\}.
\]
To do this, we start with any
noncharacteristic point $x$ on a surface $\Sigma$,
use a preliminary isometry of $(\Heis,d_H)$ to move $x$ to the origin and
suitably rotate the horizontal tangent space $HT_{x}\Sigma$.
Precisely, once we have translated $\Sigma$ to move $x$ to the origin, we choose a suitable rotation about the $e_3$-axis
$$
R_\theta:\Heis \to \Heis \qquad \qquad R_\theta\begin{pmatrix} y_1 \\ y_2 \\ y_3  \end{pmatrix} = \begin{pmatrix} y_1\cos\theta-y_2\sin\theta \\ y_1\sin\theta + y_2\cos\theta \\ y_3  \end{pmatrix}.
$$
Since $d\ell_{x}^{-1} HT_{x}\Sigma$ is a horizontal line through the origin of $\Heis$, we may choose $\theta$ such that
\begin{equation}\label{X}
dR_\theta^{-1}d\ell_{x}^{-1} HT_{x}\Sigma = \text{\rm span}\left\{\begin{pmatrix} 0 \\ 1 \\ 0 \end{pmatrix}\right\}.
\end{equation}
It is then natural to define $\tilde\Sigma = R_\theta^{-1}\ell_x^{-1}\Sigma$
and observe that it is normalized as in the initial assumptions of the proof.
We have $o \in \tilde\Sigma$, $HT_0\tilde\Sigma = \spa e_2$
and by the implicit function theorem, locally near $o$ we may
find $\varphi$ such that
$\tilde\Sigma = \{ y \, : \, y_1 = \varphi(y_2,y_3) \}$.
We introduce the following local defining function
\[
v_{\tilde\Sigma}(y)=y_1-\varphi(y_2,y_3)
\]
to fit precisely the same assumption of $v_\Sigma$ at the beginning
of the proof, namely $X_1v_{\tilde\Sigma}(o)>0$. We will compute the quantity
$$
\varphi_{11}(0)^2 - 16 \varphi_{12}(0) - 8 \varphi_2(0)^2
$$
in terms of partial derivatives of a suitable defining function of $\Sigma$.
We express $\Sigma$ as level set of $u$ defined by the relation
\begin{equation}\label{eq:uv_Sigma}
u(\ell_x R_\theta(y)) = v_{\tilde\Sigma}(y).
\end{equation}
Equation \eqref{X} tells us that
$$
HT_{x}\Sigma =
\text{\rm span}\left\{-\sin\theta  X_1(x)+ \cos\theta X_2(x)\right\}=
 \text{\rm span}\left\{\begin{pmatrix} -\sin\theta \\ \cos\theta \\
 -2x_2 \sin\theta-2x_1\cos\theta  \end{pmatrix}\right\}.
$$
Differentiating \eqref{eq:uv_Sigma} we get
\[
0<X_1 v_{\tilde\Sigma}(o)=\cos\theta X_1u(x) + \sin\theta X_2u(x).
\]
We know that $-\sin\theta X_1u(x)+\cos\theta X_2u(x)=0$, therefore
the unit vector $ (X_1u(x),X_2u(x)) $ is a multiple of $(\cos\theta,\sin\theta)$
and the previous positivity condition yields
\[
(X_1u(x),X_2u(x)) = (\cos\theta,\sin\theta) =: (\overline{p},\overline{q}).
\]
According to the results of the previous section, the invariance of $\cS^3$ under rotations $R_\theta$ and left translations, it follows that
\begin{equation}\begin{split}
\cS^3(B_H(x,r)\cap\Sigma) &= \cS^3(B_H(o,r)\cap\tilde\Sigma) \\ &= \omega_Hr^3+c_o^1(\varphi_{11}(0)^2-16\varphi_{12}(0)-8\varphi_2(0)^2)r^5+O(r^6).
\end{split}\end{equation}
Note that
$$
\ell_x R_\theta(y) = \begin{pmatrix} x_1 + y_1\cos\theta-y_2\sin\theta \\ x_2 + y_1\sin\theta+y_2\cos\theta \\ x_3 + y_3 - 2(x_1y_1+x_2y_2)\sin\theta+2(x_2y_1-x_1y_2)\cos\theta \end{pmatrix}.
$$
We conclude that $\Sigma$ is defined, locally near $x$, by the implicit equation
\begin{equation}\begin{split}\label{implicit-equation}
&u\bigl(x_1 + \varphi(y_2,y_3)\cos\theta-y_2\sin\theta,x_2 + \varphi(y_2,y_3)\sin\theta+y_2\cos\theta, \bigr. \\ &\qquad \bigl. x_3 + y_3 - 2(x_1\varphi(y_2,y_3)+x_2y_2)\sin\theta+2(x_2\varphi(y_2,y_3)-x_1y_2)\cos\theta\bigr) = 0,
\end{split}\end{equation}
valid for $(y_2,y_3)$ in a neighborhood of $(0,0) \in \R^2$.
Let us introduce the auxiliary function
$\Upsilon(y_2,y_3,\theta)$
defined as the argument of $u$ in \eqref{implicit-equation}.

Repeated differentiation of \eqref{implicit-equation} yields formulas for iterated partial derivatives of $\varphi$ in terms of $u$. For instance, differentiating in \eqref{implicit-equation} once with respect to $y_2$ gives
\begin{equation}\begin{split}\label{implicit-equation-2}
&u_1(\Upsilon(y_2,y_3,\theta))(-\sin\theta+\varphi_1\cos\theta)+u_2(\Upsilon(y_2,y_3,\theta))(\cos\theta+\varphi_1\sin\theta) \\ &\qquad \qquad +u_3(\Upsilon(y_2,y_3,\theta)) (-2x_1\cos\theta-2x_2\sin\theta-2x_1\varphi_1\sin\theta+2y_1\varphi_1\cos\theta) = 0,
\end{split}\end{equation}
where $\varphi_1$ is evaluated at $(y_2,y_3)$ and subscripts denote partial derivatives. Setting $y_2=y_3=0$ gives
$$
u_1(x)(-\sin\theta)+u_2(x)(\cos\theta)+u_3(x)(-2x_1\cos\theta-2x_2\sin\theta) = 0,
$$
i.e.,
$$
X_2u(x)\cos\theta-X_1u(x)\sin\theta=0
$$
which was already known to be true by the definition of $\theta$. On the other hand, differentiating in \eqref{implicit-equation} once with respect to $y_3$ gives
\[
\begin{split}
&u_1(\Upsilon(y_2,y_3,\theta))(\varphi_2\cos\theta)+u_2(\Upsilon(y_2,y_3,\theta))(\varphi_2\sin\theta) \\
&\qquad\qquad+u_3(\Upsilon(y_2,y_3,\theta)) (1-2x_1\sin\theta+2x_2\cos\theta) = 0.
\end{split}
\]
Again setting $y_2=y_3=0$ and combining terms gives
$$
(X_1u(x)\cos\theta+X_2u(x)\sin\theta)\varphi_2(0)+u_3(x)=0.
$$
We can rewrite the previous equality as
$$
\varphi_2(0)=  - \frac{u_3(x)}{|\nabla_0u(x)|} = -\frac14 \cP_0(x),
$$
where $\cP_0$ denotes the imaginary curvature of Arcozzi--Ferrari. Our defining function $u$ is chosen to be compatible with the orientation of the surface $\Sigma$ in the sense of Arcozzi--Ferrari, indeed, in this case $\nabla_0 u(x)$ coincides with the outward unit horizontal normal to $\Sigma$ at $x$, locally relative to the open set $\{u<0\}$.

Differentiating once in \eqref{implicit-equation-2} with respect to $y_2$, evaluating at $y_2=y_3=0$ and combining terms yields
$$
\varphi_{11}(0) = -\frac{X_{11}u(X_2u)^2-(X_{12}u+X_{21}u)X_1uX_2u+X_{22}u(X_1u)^2}{|\nabla_0u|^3}(x) = -\cH_0(x),
$$
where $\cH_0$ denotes the horizontal mean curvature.

Finally, differentiating once in \eqref{implicit-equation-2} with respect to $y_3$, evaluating and $y_2=y_3=0$ and combining terms yields
\begin{equation*}\begin{split}
&-X_1X_3u\sin\theta+X_2X_3u\cos\theta \\ & \qquad +\varphi_2(0)[\tfrac12(X_1X_2u+X_2X_1u)(\cos^2\theta-\sin^2\theta)+(X_2X_2u-X_1X_1u)\cos\theta\sin\theta] \\ &\qquad \qquad + \varphi_{12}(0)(X_1u\cos\theta+X_2u\sin\theta) = 0
\end{split}\end{equation*}
where all iterated derivatives of $u$ with respect to the vector fields $X_1,X_2,X_3$ are evaluated at $x$. Inserting the known value of $\varphi_2(0)$ and solving for $\varphi_{12}(0)$ gives
\begin{equation*}\begin{split}
\varphi_{12}(0) &= -\frac1{|\nabla_0u|}\left( \frac{-X_1X_3uX_2u+X_2X_3uX_1u}{|\nabla_0u|} - \right. \\ & \qquad \qquad \left. \frac{X_3u[\tfrac12(X_1X_2u+X_2X_1u)((X_1u)^2-(X_2u)^2)+(X_2X_2u-X_1X_1u)X_1uX_2u]}{|\nabla_0u|^3} \right),
\end{split}\end{equation*}
or (after some algebra)
\begin{equation*}
\begin{split}
\varphi_{12}(0) &= \frac{X_2u}{|\nabla_0u|} X_1\left( \frac{X_3u}{|\nabla_0u|} \right) - \frac{X_1u}{|\nabla_0u|} X_2\left( \frac{X_3u}{|\nabla_0u|}\right) - 2 \left( \frac{X_3u}{|\nabla_0u|} \right)^2\\
&= - \tfrac14 \vec{e}_1(\cP_0) - \tfrac18 \cP_0^2.
\end{split}
\end{equation*}
Finally,
$$
\varphi_{11}(0)^2-16\varphi_{12}(0)-8\varphi_2(0)^2 = \cH_0^2 + \frac32 \cP_0^2 + 4 \vec{e}_1(\cP_0) \, .
$$
This completes the proof of Theorem \ref{theoC}.
\end{proof}

We conclude this section by giving the proof of Proposition \ref{prop:no-quadratic-t-graphs-as-3-uniform-supports}, which asserts that no quadratic $x_3$-graph can be the support of a $3$-uniform measure. Fixing a symmetric $2\times 2$ matrix $\bD$, we consider the quadratic surface $\Sigma$ defined by the equation $u(x) = 0$, where
\begin{equation}\label{eq:u}
u(z,x_3) = \langle z,\bD z\rangle - x_3.
\end{equation}
The following lemma provides an explicit expression for our partial differential operator $\cH_0^2 + \tfrac32 \cP_0^2 + 4 \vec{e}_1(\cP_0)$ on such a surface. We would again like to acknowledge Andrea Merlo for pointing out to us the expression in \eqref{eq:merlo}.

\begin{lemma}\label{lem:reconciliation}
Let $\Sigma$ be the quadratic $x_3$-graph defined by the equation $u(x)=0$, where $u(x)$ is as in \eqref{eq:u}. At a point $x=(z,x_3) \in \Sigma$, the expression
$$
(\cH_0^2 + \tfrac32 \cP_0^2 + 4 \vec{e}_1(\cP_0))(x)
$$
is proportional to the quantity
\begin{equation}\label{eq:merlo}
\cM_\bD(x) := \frac{\langle \bD \bJ n_\bD(z),\bJ n_\bD(z)\rangle^2 - 2 - 8 \langle \bD n_\bD(z),\bJ n_\bD(z) \rangle}{|(\bD-\bJ)z|^2}\,,
\end{equation}
where we have defined
$$
n_\bD(z) = \frac{(\bD-\bJ)(z)}{|(\bD-\bJ)(z)|}\in{\mathbb S}^1,\quad
\vec{n}_\bD=n_\bD^1(z) X_1(z,q_\bD(z)) +n^2_\bD(z) X_2(z,q_\bD(z))
$$
and $q_\bD(z)=\langle z,\bD z\rangle$ for all $z\in\R^2$.
\end{lemma}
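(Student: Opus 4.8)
The plan is to reduce the whole identity to planar linear algebra by exploiting that the defining function \eqref{eq:u} is vertical-translation covariant: every geometric quantity entering \eqref{PDE-C} will turn out to depend only on the base point $z=(x_1,x_2)$ and not on $x_3$. First I would record the horizontal derivatives of $u(z,x_3)=\langle z,\bD z\rangle-x_3$. A direct computation gives $X_3u\equiv-1$ and
\[
(X_1u,X_2u)=2(\bD-\bJ)z,
\]
where the symplectic matrix $\bJ$ is produced precisely by the vertical correction terms $2x_2\partial_3$ and $-2x_1\partial_3$ in $X_1$ and $X_2$. Writing $w:=(\bD-\bJ)z$ and $W:=|w|=|(\bD-\bJ)z|$, this yields $|\nabla_0u|=2W$ and identifies the horizontal normal direction with $n_\bD=w/W$, exactly as in the statement. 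The decisive structural observation is that $w$, $W$, $\cP_0$ and the normalized components $w_i/W$ are all functions of $z$ alone, so that $X_1$ and $X_2$ act on them simply as $\partial_1$ and $\partial_2$; every subsequent step is then a finite computation with the constant Jacobian $\bD-\bJ$.

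Next I would evaluate the three summands of the operator one at a time. The imaginary curvature is immediate from \eqref{P0-level-set}: $\cP_0=4X_3u/|\nabla_0u|=-2/W$, so $\tfrac32\cP_0^2=6/W^2$. For the horizontal mean curvature I would substitute $X_iu/|\nabla_0u|=w_i/W$ into \eqref{H0-level-set} and differentiate; since $w$ is linear in $z$, the terms collapse to the compact identity
\[
\cH_0=\frac{\langle\bD\bJ w,\bJ w\rangle}{W^3},\qquad\text{hence}\qquad \cH_0^2=\frac{\langle\bD\bJ n_\bD,\bJ n_\bD\rangle^2}{W^2},
\]
which is the first term of $\cM_\bD$ divided by $W^2$. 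For the last summand I would use the characteristic field $\vec{e}_1=W^{-1}(w_1X_2-w_2X_1)$ from \eqref{e1-level-set} applied to $\cP_0=-2/W$. Differentiating and simplifying should produce
\[
\vec{e}_1(\cP_0)=-\frac{2\langle\bD n_\bD,\bJ n_\bD\rangle}{W^2}-\frac{2}{W^2}.
\]

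The crucial algebraic step, which I expect to be the main obstacle, is the simplification of $\vec{e}_1(\cP_0)$: after differentiation its numerator is a quadratic form in $w$ assembled from the entries of $\bD-\bJ$, and one must recognize its pieces intrinsically. Splitting $\bD-\bJ$ into its symmetric part $\bD$ and antisymmetric part $-\bJ$, the symmetric contribution reproduces $-\langle\bD w,\bJ w\rangle=-W^2\langle\bD n_\bD,\bJ n_\bD\rangle$, while the antisymmetric contribution, being essentially $\langle\bJ w,\bJ w\rangle$, collapses to a clean $-W^2$. It is exactly this last $-W^2$, arising from the $\pm1$ off-diagonal entries of $-\bJ$, that combines with the $+6/W^2$ coming from $\tfrac32\cP_0^2$ to yield the constant $-2/W^2$ in the numerator of \eqref{eq:merlo}. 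Assembling the three contributions then gives
\[
\cH_0^2+\tfrac32\cP_0^2+4\vec{e}_1(\cP_0)=\frac{\langle\bD\bJ n_\bD,\bJ n_\bD\rangle^2-2-8\langle\bD n_\bD,\bJ n_\bD\rangle}{W^2}=\cM_\bD(x),
\]
establishing the asserted proportionality (indeed with constant one). Throughout I would lean on the elementary identities $\bJ^\top=-\bJ$ and $\langle\bJ v,\bJ v\rangle=|v|^2$ to keep the bookkeeping under control.
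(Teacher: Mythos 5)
Your proposal is correct and arrives at the identity with proportionality constant exactly $1$, but by a genuinely different route than the paper. The paper argues in two stages: first a reduction to diagonal $\bD$, writing $\bD=\bQ^T\bL\bQ$ and verifying that both $\cM_\bD$ and the expression $\cH_0^2+\tfrac32\cP_0^2+4\vec{e}_1(\cP_0)$ transform covariantly under the rotation $(z,x_3)\mapsto(\bQ z,x_3)$, which is an isometry for both the Kor\'anyi and Carnot--Carath\'eodory distances; then an explicit computation in eigenvalue coordinates $\lambda_1,\lambda_2$. You bypass the reduction entirely by computing with a general symmetric $\bD$, anchored on the identity $(X_1u,X_2u)=2(\bD-\bJ)z$ and the observation that every quantity involved depends on $z$ alone, so that $X_1,X_2$ act as $\partial_1,\partial_2$. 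I have checked your three formulas, writing $w=(\bD-\bJ)z$ and $W=|w|$: indeed $\cP_0=-2/W$; differentiation gives $\cH_0=\mathrm{tr}(\bD)/W-\langle \bD w,w\rangle/W^3$, which equals your $\langle\bD\bJ n_\bD,\bJ n_\bD\rangle/W$ by the trace identity $\langle\bD v,v\rangle+\langle\bD\bJ v,\bJ v\rangle=\mathrm{tr}(\bD)|v|^2$ in the orthonormal frame $\{n_\bD,\bJ n_\bD\}$ (this is the one step you leave implicit, and it is exactly the ``collapse'' you assert); and $\vec{e}_1(\cP_0)=\bigl(-2\langle\bD n_\bD,\bJ n_\bD\rangle-2\bigr)/W^2$, where the constant term arises, as you say, from $\langle\bJ w,\bJ w\rangle=W^2$ after splitting $(\bD-\bJ)^T=\bD+\bJ$ into its symmetric and antisymmetric parts, so that $4\vec{e}_1(\cP_0)$ contributes $-8/W^2$ against the $+6/W^2$ from $\tfrac32\cP_0^2$. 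The trade-off: your argument is shorter, uniform in $\bD$, avoids the invariance verifications, and makes the structural source of each term in $\cM_\bD$ transparent; the paper's diagonalization keeps the computation in bare coordinates, and the same reduction does double duty, since it is reused in the proof of Proposition \ref{prop:no-quadratic-t-graphs-as-3-uniform-supports}, where the vanishing locus of $\cM_\bL$ is analyzed eigenvalue by eigenvalue.
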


\begin{proof}[Proof of Lemma \ref{lem:reconciliation}]
We first show that it is enough to consider the case when the defining matrix $\bD$ is diagonal. We then verify the conclusion in the diagonal case.

Let $\bD$ be a symmetric $2\times 2$ matrix and write $\bD = \bQ^T \bL \bQ$ with $\bL$ diagonal and $\bQ$ is a rotation. 
As a consequence, we have the equalities
\begin{equation}\label{eq:bQD}
\bQ\bD=\bL\bQ \quad\text{and} \quad \bQ\bJ=\bJ\bQ.
\end{equation}
We introduce a new defining function $\tilde u$ as follows 
\[
u(z,x_3)=\langle \bQ z,\bL \bQ z\rangle-x_3=\tilde u(\bQ z,x_3),
\]
where $\tilde u(w,x_3)=\langle w,\bL w\rangle-x_3$. 
As a consequence, we have that $u(\bQ^{-1}w,x_3)=\tilde u(w,x_3)$.
Defining $\widetilde\Sigma=\tilde u^{-1}(0)$, there holds
\begin{equation}\label{eq:rotSigma}
\widetilde\Sigma=\bQ_0 \Sigma
\end{equation}
with $\bQ_0(z,x_3)=(\bQ z,x_3)$.
By \eqref{eq:bQD} we get
\[
\bQ n_\Sigma(z) = n_{\widetilde\Sigma}(\bQ z).
\]
This formula can be seen also as a consequence of \eqref{eq:rotSigma}.
Moreover, we get
\begin{equation*}\begin{split}
\cM_\bD(z,x_3) 
&= \frac{\langle \bQ \bD \bJ n_\bD(z),\bQ\bJ n_\bD(z)\rangle^2 - 2 - 8 \langle \bQ\bD n_\bD(z),\bQ\bJ n_\bD(z) \rangle}{|\bQ(\bD-\bJ)z|^2} \\
&= \frac{\langle \bL \bJ \bQ n_\bD(z),\bJ\bQ n_\bD(z)\rangle^2 -2-8 \langle \bL\bQ  n_\bD(z),\bJ\bQ n_\bD(z) \rangle}{|(\bL-\bJ)\bQ z|^2} \\
&= \frac{\langle \bL \bJ  n_\bL(\bQ z),\bJ n_\bL(\bQ z)\rangle^2 -2-8 \langle \bL n_\bL(\bQ z),\bJ n_\bL(\bQ z) \rangle}{|(\bL-\bJ)\bQ z|^2} \\
&= \cM_\bL(\bQ z,x_3)\,.
\end{split}\end{equation*}
The geometric quantities $\cH_0$ and $\cP_0$, 
being metrically defined, are invariant under the rotations $(z,x_3) \mapsto (\bQ z,x_3)$, which in this case are isometries of both Carnot--Carath\'eodory and Kor\'anyi distances. Precisely $\cP_0(z,t)=\widetilde\cP_0(\bQ z,t)$
and $\cH_0(z,t)=\widetilde\cH_0(\bQ z,t)$,
where we have denoted by $\widetilde\cP_0$ and $\widetilde\cH_0$ 
the imaginary curvature and the horizontal
mean curvature and imaginary curvature, respectively, 
of the rotated surface $\widetilde\Sigma$. 
It is not difficult to check by direct computation that
\[
\vec{e}_1\cP_0(z,x_3)=\langle \nabla \cP_0(z,x_3),\cJ \vec{n}_\Sigma(z,x_3)\rangle=\langle \nabla \widetilde\cP_0(\bQ z,x_3),\cJ \vec{n}_{\widetilde\Sigma}(\bQ z,x_3)\rangle=\vec{\tilde e}_1\widetilde\cP_0(\bQ z,x_3).
\]
As a consequence, we have proved that
$$
(\cH_0^2 + \tfrac32 \cP_0^2 + 4 \vec{e}_1(\cP_0))(z,x_3) = (\widetilde\cH_0^2 + \tfrac32 \widetilde\cP_0^2 + 4 \vec{\tilde e}_1(\widetilde\cP_0))(\bQ z,x_3).
$$
The preceding argument shows that in order to prove the lemma, it suffices to assume that $\bD$ is a diagonal matrix.

We therefore consider a surface $\Sigma$ defined by the equation $u(x)=0$, where $u(x) = \langle z,\bL z \rangle - x_3 = \lambda_1 x_1^2 + \lambda_2 x_2^2 - x_3$ and $x=(z,x_3)=(x_1,x_2,x_3)$. We compute
$$
X_1u = 2\lambda_1x_1 - 2x_2, \quad X_2u = 2\lambda_2x_2 + 2x_1,
$$
and
$$
|\nabla_0 u| = 2\sqrt{(\lambda_1x_1-x_2)^2+(\lambda_2x_2+x_1)^2} = 2|(\bL-\bJ)z|\,.
$$
Then
$$
n_\bL(z) = \left( \frac{\lambda_1x_1-x_2}{|(\bL-\bJ)z|},\frac{\lambda_2x_2+x_1}{|(\bL-\bJ)z|} \right)
$$
and
\begin{equation*}\begin{split}
\cH_0 &= X_1\left(\frac{X_1u}{|\nabla_0u|}\right) + X_2\left(\frac{X_2u}{|\nabla_0u|}\right) \\
&= \frac{\lambda_1(\lambda_2x_2+x_1)^2+\lambda_2(\lambda_1x_1-x_2)^2}{|(\bL-\bJ)z|^3} = \frac{\langle \bL\bJ n_\bL,\bJ n_\bL\rangle}{|(\bL-\bJ)z|}\,,
\end{split}\end{equation*}
$$
\cP_0 = \frac{4X_3u}{|\nabla_0u|} = \frac{-2}{|(\bL-\bJ)z|}\,,
$$
and
\begin{equation*}\begin{split}
\vec{e}_1(\cP_0)
&= \frac{X_1uX_2\cP_0-X_2uX_1\cP_0}{|\nabla_0u|} \\
&= \frac{\lambda_1x_1-x_2}{|(D-J)z|}X_2(\frac{-2}{|(\bL-\bJ)z|}) - \frac{\lambda_2x_2+x_1}{|(\bL-\bJ)z|}X_1(\frac{-2}{|(\bL-\bJ)z|}) \\
&= \frac{-2}{|(\bL-\bJ)z|^2} + \frac{2(\lambda_2-\lambda_1)(\lambda_1x_1-x_2)(\lambda_2x_2+x_1)}{|(\bL-\bJ)z|^4} \\
&= \frac{-2}{|(\bL-\bJ)z|^2} - \frac{2\langle \bL n_\bL,\bJ n_\bL\rangle}{|(\bL-\bJ)z|^2}.
\end{split}\end{equation*}
Thus
$$
\cH_0^2+\frac32\cP_0^2+4\vec{e}_1(\cP_0) = \frac{\langle \bL \bJ n_\bL,\bJ n_\bL\rangle^2 + 6 - 8 - 8 \langle \bL n_\bL,\bJ n_\bL\rangle}{|(\bL-\bJ)z|^2}
$$
which completes the proof of the lemma.
\end{proof}

\begin{proof}[Proof of Proposition \ref{prop:no-quadratic-t-graphs-as-3-uniform-supports}]
Let $\Sigma$ be a quadratic $x_3$-graph. As previously explained, we may assume without loss of generality that $\Sigma$ is defined by the equation $u(x)=0$, where $u(z,x_3) = \langle z,\bL z\rangle -x_3$ and
$$
\bL = \begin{pmatrix} \lambda_1 & 0 \\ 0 & \lambda_2 \end{pmatrix}
$$
is diagonal. The case $\lambda_1=\lambda_2$ has already been treated in Example \ref{ex:paraboloids}, so we assume $\lambda_1 \ne \lambda_2$. We will show that in this case, the expression
$$
\langle \bL \bJ n_\bL,\bJ n_\bL\rangle^2 - 2 - 8 \langle \bL n_\bL,\bJ n_\bL\rangle
$$
cannot vanish in any open subset of $\Sigma$. Clearing denominators, we consider the equation
$$
\langle \bL\bJ(\bL-\bJ)z,\bJ(\bL-\bJ)z\rangle^2 - 2|(\bL-\bJ)z|^4 - 8|(\bL-\bJ)z|^2\langle \bL(\bL-\bJ)z,\bJ(\bL-\bJ)z \rangle = 0.
$$
By direct computation, taking into account that
\[
|(\bL-\bJ)z|^2=(\lambda_1x_1-x_2)^2+(\lambda_2x_2+x_1)^2
\]
and writing the previous expression in terms of $A=\lambda_2x_2+x_1$ 
and $B=\lambda_1x_1-x_2$ we get
\begin{equation}\label{eq:expressAB}
(\lambda_1^2-2) A^4+(\lambda_2^2-2) B^4+
(2\lambda_1\lambda_2-4)A^2B^2+8(\lambda_2-\lambda_1) AB (A^2+B^2)=0.
\end{equation}
If the matrix $\bL-\bJ$ is invertible, namely $\lambda_1\lambda_2\neq-1$, the variable $(A,B)$ takes any vector of $\R^2$, hence
all coefficients of \eqref{eq:expressAB} are null, leading to the contradiction
$\lambda_1=\lambda_2$.
If $\lambda_1\lambda_2=-1$, then $B=\lambda_1A$ and the previous equation becomes
\[
\big\{\lambda_1^2-2+\lambda_1^4(\lambda_2^2-2)-6\lambda_1^2+8(\lambda_2-\lambda_1)\lambda_1(1+\lambda_1^2)\big\}A^4=0.
\] 
The coefficient of $A^4$ must vanish and can be written as
\[
-(10)(1+2\lambda_1^2+\lambda_1^4)
\]
that is never vanishing, hence getting a contradiction.
\end{proof}

\section{Integrals of monomials over the Kor\'anyi sphere and Kor\'anyi ball}\label{sec:koranyi-integrals}

In this section we prove Lemma \ref{polar-coordinates} on a polar coordinate decomposition for the Kor\'anyi-type norm $\nu_K$ in $\R^2$. We begin by stating some corollaries of this lemma. We consider weighted polynomials in the coordinates $\eta_1$ and $\eta_2$, where the weighting is defined by the dilations~$\delta_r$.

\begin{corollary}\label{polar-coordinates-corollary}
Let $h=h(\eta)$ be a monomial of weighted degree $d$. Then
$$
\int_{\Sph^1_K} h(\xi) \, d\sigma(\xi) = (d+3) \int_{\B^1_K} h(\eta)
\, d\eta.
$$
\end{corollary}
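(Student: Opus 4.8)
The plan is to derive the stated relation between the integral over the Korányi sphere $\Sph^1_K$ and the integral over the Korányi ball $\B^1_K$ directly from the polar coordinate decomposition in Lemma~\ref{polar-coordinates}, applied to the specific integrand $h(\eta)\,\mathbf{1}_{\B^1_K}(\eta)$. The key mechanism is that a weighted-homogeneous monomial scales predictably under the anisotropic dilations $\delta_t(\eta_1,\eta_2)=(t\eta_1,t^2\eta_2)$: if $h$ has weighted degree $d$ (meaning $\eta_1$ has weight $1$ and $\eta_2$ has weight $2$), then $h(\delta_t\xi)=t^d\,h(\xi)$. Combining this with the radial factor $t^2\,dt$ appearing in the polar formula produces a single power $t^{d+2}$, whose integral against the indicator of $\B^1_K$ is elementary.

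First I would apply Lemma~\ref{polar-coordinates} to the function $h\cdot\mathbf{1}_{\B^1_K}$, obtaining
\begin{equation*}
\int_{\B^1_K} h(\eta)\,d\eta = \int_{\Sph^1_K} \int_0^\infty h(\delta_t\xi)\,\mathbf{1}_{\B^1_K}(\delta_t\xi)\,t^2\,dt\,d\sigma(\xi).
\end{equation*}
For $\xi\in\Sph^1_K$ we have $\nu_K(\xi)=1$, and since $\nu_K(\delta_t\xi)=t\,\nu_K(\xi)=t$ the point $\delta_t\xi$ lies in $\B^1_K$ precisely when $t\le 1$. Hence the inner integral runs over $t\in[0,1]$. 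Using the weighted homogeneity $h(\delta_t\xi)=t^d h(\xi)$, the inner integral becomes $h(\xi)\int_0^1 t^{d+2}\,dt = \tfrac{1}{d+3}\,h(\xi)$, so that
\begin{equation*}
\int_{\B^1_K} h(\eta)\,d\eta = \frac{1}{d+3}\int_{\Sph^1_K} h(\xi)\,d\sigma(\xi).
\end{equation*}
Rearranging gives the claimed identity.

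The only point requiring a little care — and what I expect to be the main (though modest) obstacle — is justifying that the indicator of $\B^1_K$ transforms correctly under $\delta_t$ for points of the form $\delta_t\xi$ with $\xi$ on the unit sphere; this rests on the homogeneity $\nu_K(\delta_t\eta)=t\,\nu_K(\eta)$ of the Korányi-type norm, which follows immediately from the definition $\nu_K(\eta)=(\eta_1^4+\eta_2^2)^{1/4}$ together with the scaling $\delta_t(\eta_1,\eta_2)=(t\eta_1,t^2\eta_2)$, since $((t\eta_1)^4+(t^2\eta_2)^2)^{1/4}=(t^4(\eta_1^4+\eta_2^2))^{1/4}=t\,\nu_K(\eta)$. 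Once this homogeneity is in hand, every step is a routine application of Fubini's theorem and the elementary integral $\int_0^1 t^{d+2}\,dt=\tfrac{1}{d+3}$, and the argument extends by linearity from monomials to arbitrary weighted-homogeneous polynomials of fixed degree, which is the form in which the corollary is used in the proof of Theorem~\ref{theoC}.
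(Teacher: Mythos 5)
Your proof is correct and follows essentially the same route as the paper: both apply Lemma~\ref{polar-coordinates} to $h$ restricted to $\B^1_K$, use the homogeneity $h(\delta_t\xi)=t^d h(\xi)$ to reduce the inner integral to $\int_0^1 t^{d+2}\,dt = \tfrac{1}{d+3}$, and rearrange. The paper is merely terser, writing the inner integral over $[0,1]$ directly without spelling out the indicator-function justification via $\nu_K(\delta_t\xi)=t$, which you handle explicitly.
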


\begin{proof}
Compute
\begin{equation*}\begin{split}
\int_{\B^1_K} h(\eta) \, d\eta
= \int_{\Sph^1_K} \int_0^1 h(\delta_t\xi)
\,t^2\,dt\,d\sigma(\xi)
= \int_{\Sph^1_K} \int_0^1 h(\xi) \, t^{d+2} \, dt\,d\sigma(\xi).
\end{split}\end{equation*}
The conclusion follows upon integrating in $t$.
\end{proof}

In particular, we compute monomial integrals $\int_{\B_K^1}
\eta_1^\alpha\eta_2^\beta \, d\eta$ over the ball $\B_K^1$. If either
$\alpha$ or $\beta$ is odd, the integral is clearly zero.

\begin{lemma}\label{monomial-integrals}
For nonnegative integers $a,b$,
$$
\int_{\B^1_K} \eta_1^{2a}\eta_2^{2b} \, d\eta = \frac1{2b+1} \int_0^1
u^{a/2-3/4}(1-u)^{b+1/2} \, du =
\frac{\Gamma(\tfrac{a}2+\tfrac14)\Gamma(b+\tfrac12)}{2\Gamma(b+\tfrac{a}2+\tfrac74)}.
$$
\end{lemma}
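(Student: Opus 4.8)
The plan is to compute the integral directly, exploiting the evenness of the integrand together with the explicit shape of the Korányi ball $\B^1_K = \{\eta : \eta_1^4 + \eta_2^2 \le 1\}$, and then to recognize the resulting one-dimensional integral as an Euler Beta function. Since both exponents $2a$ and $2b$ are even, the integrand $\eta_1^{2a}\eta_2^{2b}$ is invariant under the reflections $\eta_1 \mapsto -\eta_1$ and $\eta_2 \mapsto -\eta_2$, and the domain $\B^1_K$ shares these symmetries. I would therefore first reduce to the first quadrant, writing
$$
\int_{\B^1_K} \eta_1^{2a}\eta_2^{2b}\,d\eta = 4\int_{\{\eta_1,\eta_2\ge 0,\ \eta_1^4+\eta_2^2\le 1\}} \eta_1^{2a}\eta_2^{2b}\,d\eta_1\,d\eta_2.
$$

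Next I would integrate out $\eta_2$ for each fixed $\eta_1 \in [0,1]$ over the range $0 \le \eta_2 \le (1-\eta_1^4)^{1/2}$, so that the inner integral equals $\tfrac{1}{2b+1}(1-\eta_1^4)^{b+1/2}$. This leaves
$$
\frac{4}{2b+1}\int_0^1 \eta_1^{2a}(1-\eta_1^4)^{b+1/2}\,d\eta_1.
$$
The key substitution is $u = \eta_1^4$, for which $d\eta_1 = \tfrac14 u^{-3/4}\,du$ and $\eta_1^{2a} = u^{a/2}$; this absorbs the factor $4$ and converts the integral into $\tfrac{1}{2b+1}\int_0^1 u^{a/2-3/4}(1-u)^{b+1/2}\,du$, which is precisely the first claimed equality.

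For the second equality I would identify the integral as the Beta function $B(\tfrac{a}2+\tfrac14,\, b+\tfrac32) = \Gamma(\tfrac{a}2+\tfrac14)\Gamma(b+\tfrac32)/\Gamma(\tfrac{a}2+b+\tfrac74)$, obtained by matching the exponents $u^{p-1}(1-u)^{q-1}$ with $p = \tfrac{a}2 + \tfrac14$ and $q = b + \tfrac32$. The only point requiring care — and the closest thing to an obstacle in this otherwise routine computation — is reconciling the prefactor $\tfrac1{2b+1}$ and the argument $b+\tfrac32$ with the stated form, which instead features $b+\tfrac12$. This is resolved by the functional equation $\Gamma(b+\tfrac32) = (b+\tfrac12)\Gamma(b+\tfrac12)$ together with $2b+1 = 2(b+\tfrac12)$, which collapses $\tfrac{\Gamma(b+3/2)}{2b+1}$ to $\tfrac{\Gamma(b+1/2)}{2}$ and produces the final expression $\tfrac{\Gamma(\frac{a}2+\frac14)\Gamma(b+\frac12)}{2\Gamma(b+\frac{a}2+\frac74)}$.
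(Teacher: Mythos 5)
Your proof is correct and follows essentially the same route as the paper's: quadrant symmetry giving the factor $4$, integration in $\eta_2$ first, the substitution $u=\eta_1^4$, and identification with the Beta function $B(\tfrac{a}{2}+\tfrac14,\,b+\tfrac32)$. The only difference is that you explicitly carry out the reduction $\Gamma(b+\tfrac32)=(b+\tfrac12)\Gamma(b+\tfrac12)$ to match the stated form, a step the paper leaves implicit.
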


\begin{proof}
Integrating first in $\eta_2$ yields
$$
\int_{\B^1_K} \eta_1^{2a}\eta_2^{2b} \, d\eta
= 4 \int_0^1 \eta_1^{2a} \int_0^{\sqrt{1-\eta_1^4}} \eta_2^{2b} \,
d\eta_2 \, d\eta_1
= \frac4{2b+1} \int_0^1 \eta_1^{2a} (1-\eta_1^4)^{b+1/2} \, d\eta_1.
$$
The change of variables $u=\eta_1^4$ gives the first conclusion of the
lemma. To obtain the second conclusion we use the standard integral
representation of the beta function:
$$
B(s,t) = \int_0^1 u^{s-1}(1-u)^{t-1} =
\frac{\Gamma(s)\Gamma(t)}{\Gamma(s+t)}
$$
valid for $s,t>0$.
\end{proof}

For instance, the value of $\omega_H := \Vol(\B^1_K)$ is obtained
from Lemma \ref{monomial-integrals} with $a=b=0$. We find
\begin{equation}\label{eq:omegaH}
\omega_H = \frac{\Gamma(\tfrac14)\Gamma(\tfrac12)}{2\Gamma(\tfrac74)}
= \frac13 \sqrt{\frac2\pi} \Gamma(\tfrac14)^2 \approx 3.49608
\end{equation}
via the reflection formula $\Gamma(\tfrac14)\Gamma(\tfrac34)=\pi\sqrt2$. 

\begin{proof}[Proof of Lemma \ref{polar-coordinates}]
The spheres $\Sph_K^1(r):=\{\eta:\nu_K(\eta)=r\}$, $r>0$, are smooth
submanifolds of $\R^2$. According to the classical coarea formula, for
an arbitrary integrable $h$ on $\B_K^1$ we have
\begin{equation}\label{coarea}
\int_{\B_K^1} h(\eta)\,d\eta = \int_0^1 \int_{\nu_K^{-1}(t)}
\frac{h(\eta)}{|\nabla\nu_K(\eta)|} \, d\cH^1(\eta)\,dt,
\end{equation}
where $\nabla$ denotes the standard Euclidean gradient. We convert the
inner integral into an integral over the unit sphere $\Sph_K^1$ via the
change of variables $\delta_t:\Sph_K^1\to\Sph_K^1(t)=\nu_K^{-1}(t)$.
For an integrable function $g:\Sph_K^1(t) \to \R$ we have
\begin{equation}\label{change-of-variables}
\int_{\Sph_K^1(t)} g(\eta) \, d\cH^1(\eta) = \int_{\Sph_K^1}
g(\delta_t\eta) \, d((\delta_{1/t})_\#\cH^1)(\eta).
\end{equation}
The measure $(\delta_{1/t})_\#\cH^1 \restrict \Sph_K^1$ is absolutely
continuous with respect to $\cH^1 \restrict \Sph_K^1$.
Recall that $(\delta_{1/t})_\#\cH^1(B) = \cH^1(\delta_t(B))$ for $B
\subset \Sph_K^1$. The Radon--Nikodym derivative of
$(\delta_{1/t})_\#\cH^1 \restrict \Sph_K^1$ with respect to $\cH^1
\restrict \Sph_K^1$ is given by
$$
\frac{d(\delta_{1/t})_\#\cH^1}{d\cH^1}(\eta)
=
\frac{|(\delta_t)_*(\vec{T})|}{|\vec{T}|}
$$
for any nonzero tangent vector $\vec{T} \in T_\eta\Sph_K^1$, where
$|\cdot|$ denotes the usual Euclidean norm. We have
$\nabla\nu_K=\nu_K^{-3}(\eta_1^3,\tfrac12\eta_2)$ and so $\vec{T} =
\nu_K^{-3}(\tfrac12\eta_2,-\eta_1^3)$ is a nonzero tangent vector at
$\eta$. Evaluating the above Radon--Nikodym derivative, we find
$$
\frac{d(\delta_{1/t})_\#\cH^1}{d\cH^1}(\eta) = t
\left( \frac{\tfrac14\eta_2^2+t^2\eta_1^6}{\tfrac14\eta_2^2+\eta_1^6}
\right)^{1/2} \,.
$$
We also have
$$
|\nabla\nu_K(\eta)| = \frac{\sqrt{\eta_1^6 + \tfrac14
    \eta_2^2}}{\nu_K^3(\eta)}
$$
and so, for $\eta\in\Sph_K^1$ and $t>0$,
$$
|\nabla\nu_K(\delta_t\eta)| = \frac{\sqrt{t^6\eta_1^6 + \tfrac14
    t^4\eta_2^2}}{t^3} = \frac{\sqrt{t^2\eta_1^6+\tfrac14\eta_2^2}}{t}\,.
$$
Returning to \eqref{coarea} and using \eqref{change-of-variables} with
$g=h/|\nabla\nu_K|$ we obtain
\begin{equation*}\begin{split}
\int_{\B_K^1} h(\eta)\,d\eta
&= \int_0^1 \int_{\Sph_K^1}
\frac{t\,h(\delta_t\eta)}{\sqrt{t^2\eta_1^6+\tfrac14\eta_2^2}} \,
t\left(\frac{\tfrac14\eta_2^2+t^2\eta_1^6}{\tfrac14\eta_2^2+\eta_1^6}\right)^{1/2}\,d\cH^1(\eta)\,dt
\\
&= \int_0^1 \int_{\Sph_K^1}
\frac{t^2\,h(\delta_t\eta)}{\sqrt{\eta_1^6+\tfrac14\eta_2^2}} \,d\cH^1(\eta)\,dt\,.
\end{split}\end{equation*}
The conclusion holds with the Radon measure $d\sigma(\eta) =
(\eta_1^6+\tfrac14\eta_2^2)^{-1/2}\,d\cH^1(\eta)$.
\end{proof}

\bibliographystyle{acm}
\bibliography{kp}

\begin{thebibliography}{10}

\bibitem{af1}
{\sc Arcozzi, N., and Ferrari, F.}
\newblock Metric normal and distance function in the {H}eisenberg group.
\newblock {\em Math.\ Z. 256}, 3 (2007), 661--684.

\bibitem{af2}
{\sc Arcozzi, N., and Ferrari, F.}
\newblock The {H}essian of the distance from a surface in the {H}eisenberg
  group.
\newblock {\em Ann.\ Acad.\ Sci.\ Fenn.\ Math. 33}, 1 (2008), 35--63.

\bibitem{BTW2009}
{\sc Balogh, Z.~M., Tyson, J.~T., and Warhurst, B.}
\newblock Sub-riemannian vs. euclidean dimension comparison and fractal
  geometry on carnot groups.
\newblock {\em Adv. Math.}, 2 (2009), 560--619.

\bibitem{cdpt:survey}
{\sc Capogna, L., Danielli, D., Pauls, S.~D., and Tyson, J.~T.}
\newblock {\em An introduction to the {H}eisenberg group and the
  sub-{R}iemannian isoperimetric problem}, vol.~259 of {\em Progress in
  Mathematics}.
\newblock Birkh\"auser Verlag, Basel, 2007.

\bibitem{clt:uniform-rectifiability}
{\sc Chousionis, V., Li, S., and Tyson, J.~T.}
\newblock Uniform rectifiablity in the {H}eisenberg group.
\newblock in preparation.

\bibitem{ct:marstrand}
{\sc Chousionis, V., and Tyson, J.~T.}
\newblock Marstrand's density theorem in the {H}eisenberg group.
\newblock {\em Bull.\ Lond.\ Math.\ Soc. 47}, 5 (2015), 771--788.

\bibitem{dgnp:bernstein1}
{\sc Danielli, D., Garofalo, N., Nhieu, D.~M., and Pauls, S.~D.}
\newblock Instability of graphical strips and a positive answer to the
  {B}ernstein problem in the {H}eisenberg group {$\Bbb H^1$}.
\newblock {\em J. Differential Geom. 81}, 2 (2009), 251--295.

\bibitem{dgnp:bernstein2}
{\sc Danielli, D., Garofalo, N., Nhieu, D.-M., and Pauls, S.~D.}
\newblock The {B}ernstein problem for embedded surfaces in the {H}eisenberg
  group {$\Bbb H^1$}.
\newblock {\em Indiana Univ. Math. J. 59}, 2 (2010), 563--594.

\bibitem{fed:gmt}
{\sc Federer, H.}
\newblock {\em Geometric Measure Theory}, vol.~153 of {\em Die Grundlehren der
  mathematischen Wissenschaften}.
\newblock Springer-Verlag, New York, 1969.

\bibitem{fs:hardy}
{\sc Folland, G., and Stein, E.}
\newblock {\em Hardy spaces on homogeneous groups}.
\newblock Princeton University Press, Princeton, New Jersey, 1982.

\bibitem{gro:cc}
{\sc Gromov, M.}
\newblock Carnot-{C}arath\'eodory spaces seen from within.
\newblock In {\em Sub-Riemannian Geometry}, vol.~144 of {\em Progress in
  Mathematics}. Birkh\"auser, Basel, 1996, pp.~79--323.

\bibitem{hrr:bernstein}
{\sc Hurtado, A., Ritor\'e, M., and Rosales, C.}
\newblock The classification of complete stable area-stationary surfaces in the
  {H}eisenberg group {$\Bbb H^1$}.
\newblock {\em Adv. Math. 224}, 2 (2010), 561--600.

\bibitem{kp:uniformly-distributed}
{\sc Kirchheim, B., and Preiss, D.}
\newblock Uniformly distributed measures in {E}uclidean spaces.
\newblock {\em Math.\ Scand. 90}, 1 (2002), 152--160.

\bibitem{kp:besicovitch}
{\sc Kowalski, O., and Preiss, D.}
\newblock Besicovitch-type properties of measures and submanifolds.
\newblock {\em J. Reine Angew.\ Math. 379\/} (1987), 115--151.

\bibitem{krprks:realanalytic}
{\sc Krantz, S.~G., and Parks, H.~R.}
\newblock {\em A primer of real analytic functions}, second~ed.
\newblock Birkh\"auser Advanced Texts: Basler Lehrb\"ucher. Birkh\"auser
  Boston, Inc., Boston, MA, 2002.

\bibitem{mag:towards_area}
{\sc Magnani, V.}
\newblock Towards a theory of area in homogeneous groups.
\newblock In progress.

\bibitem{mag:nnote}
{\sc Magnani, V.}
\newblock Note on coarea formulae in the {H}eisenberg group.
\newblock {\em Publ.\ Mat. 48}, 2 (2004), 409--422.

\bibitem{mag:blowup_heis}
{\sc Magnani, V.}
\newblock Blow-up of regular submanifolds in {H}eisenberg groups and
  applications.
\newblock {\em Cent. Eur. J. Math. 4}, 1 (2006), 82--109.

\bibitem{mag:blowup2step}
{\sc Magnani, V.}
\newblock Blow-up estimates at horizontal points and applications.
\newblock {\em J. Geom. Anal. 20}, 3 (2010), 705--722.

\bibitem{Mag14}
{\sc Magnani, V.}
\newblock Towards differential calculus in stratified groups.
\newblock {\em J. Aust. Math. Soc. 95}, 1 (2013), 76--128.

\bibitem{mv:intrinsic}
{\sc Magnani, V., and Vittone, D.}
\newblock An intrinsic measure for submanifolds in stratified groups.
\newblock {\em J. Reine Angew.\ Math. 619\/} (2008), 203--232.

\bibitem{mar:density}
{\sc Marstrand, J.~M.}
\newblock The {$(\varphi ,\,s)$} regular subsets of {$n$}-space.
\newblock {\em Trans.\ Amer.\ Math.\ Soc. 113\/} (1964), 369--392.

\bibitem{mss:intrinsic}
{\sc Mattila, P., Serapioni, R., and Serra~Cassano, F.}
\newblock Characterizations of intrinsic rectifiability in {H}eisenberg groups.
\newblock {\em A..\ Sc.\ Norm.\ Super.\ Pisa Cl.\ Sci.\ (5) 9}, 4 (2010),
  687--723.

\bibitem{nim1}
{\sc Nimer, A.~D.}
\newblock Conical $3$-uniform measures: characterization and new examples.
\newblock Preprint, 2016.

\bibitem{nim2}
{\sc Nimer, A.~D.}
\newblock Local uniform rectifiability of uniformly distributed measures.
\newblock Preprint, 2016.

\bibitem{nim3}
{\sc Nimer, A.~D.}
\newblock A sharp bound on the {H}ausdorff dimension of the singular set of a
  uniform measure.
\newblock {\em Calc.\ Var.\ Partial Differential Equations 56}, 4 (2017), Art.
  111, 31.

\bibitem{pre:big}
{\sc Preiss, D.}
\newblock Geometry of measures in {${\bf R}^n$}: distribution, rectifiability,
  and densities.
\newblock {\em Ann.\ of Math.\ (2) 125}, 3 (1987), 537--643.

\bibitem{sc:GMTtopics}
{\sc Serra~Cassano, F.}
\newblock Some topics of geometric measure theory in {C}arnot groups.
\newblock In {\em Geometry, analysis and dynamics on sub-{R}iemannian
  manifolds}, vol.~1 of {\em EMS Series Lect.\ Math.} European Math.\ Soc.,
  Providence, RI, 2016, pp.~1--121.

\bibitem{tol:uniform}
{\sc Tolsa, X.}
\newblock Uniform measures and uniform rectifiability.
\newblock {\em J. Lond.\ Math.\ Soc.\ (2) 92}, 1 (2015), 1--18.

\end{thebibliography}
\end{document}